\numberwithin{equation}{section}
\numberwithin{figure}{section}
\theoremstyle{plain}
\newtheorem{thm}{\protect\theoremname}[section]
\theoremstyle{plain}
\newtheorem{conjecture}[thm]{\protect\conjecturename}
\theoremstyle{definition}
\newtheorem{defn}[thm]{\protect\definitionname}
\theoremstyle{plain}
\newtheorem{fact}[thm]{\protect\factname}
\theoremstyle{remark}
\newtheorem{rem}[thm]{\protect\remarkname}
\theoremstyle{plain}
\newtheorem{cor}[thm]{\protect\corollaryname}
\theoremstyle{plain}
\newtheorem{prop}[thm]{\protect\propositionname}
\theoremstyle{plain}
\newtheorem{lem}[thm]{\protect\lemmaname}
\theoremstyle{remark}
\newtheorem{notation}[thm]{\protect\notationname}
\theoremstyle{remark}
\newtheorem{claim}[thm]{\protect\claimname}
\theoremstyle{plain}
\newtheorem{obs}[thm]{Observation}
\font
\definecolor{blue}{RGB}{14,107,217}
\definecolor{green}{RGB}{0,158,40}
\definecolor{red}{RGB}{235,16,16}
\definecolor{brown}{RGB}{164,66,0}
\definecolor{orange}{RGB}{231,135,26}
\definecolor{purple}{RGB}{94,53,177}
\providecommand{\claimname}{Claim}
\providecommand{\conjecturename}{Conjecture}
\providecommand{\corollaryname}{Corollary}
\providecommand{\definitionname}{Definition}
\providecommand{\factname}{Fact}
\providecommand{\lemmaname}{Lemma}
\providecommand{\notationname}{Notation}
\providecommand{\propositionname}{Proposition}
\providecommand{\remarkname}{Remark}
\providecommand{\theoremname}{Theorem}
\begin{document}
\global\long\def\N{\mathbb{N}}%
\global\long\def\Z{\mathbb{Z}}%
\global\long\def\Q{\mathbb{Q}}%
\global\long\def\R{\mathbb{R}}%
\global\long\def\C{\mathbb{C}}%
 
\global\long\def\T{\mathbb{T}}%
\global\long\def\cM{\mathcal{M}}%
\global\long\def\cN{\mathcal{N}}%
\global\long\def\cC{\mathcal{C}}%
\global\long\def\cZ{\mathcal{Z}}%
\global\long\def\cQ{\mathcal{Q}}%
 
\global\long\def\cU{\mathcal{U}}%
\global\long\def\cE{\mathcal{E}}%
\global\long\def\cS{\mathcal{S}}%
\global\long\def\fC{\mathfrak{C}}%
\global\long\def\i#1{\iota\left(#1\right)}%

\title{On dp-minimal expansions of the integers II}
\author{Eran Alouf}
\address{Einstein Institute of Mathematics, Hebrew University of Jerusalem,
91904, Jerusalem Israel.}
\email{Eran.Alouf@mail.huji.ac.il}
\begin{abstract}
We first prove that if $\mathcal{Z}$ is a dp-minimal expansion of
$\left(\mathbb{Z},+,0,1\right)$ which is not interdefinable with
$\left(\mathbb{Z},+,0,1,<\right)$, then every infinite subset of
$\Z$ definable in $\mathcal{Z}$ is generic in $\Z$. Using this,
we prove that if $\mathcal{Z}$ is a dp-minimal expansion of $\left(\mathbb{Z},+,0,1\right)$
with monster model $G$ such that $G^{00}\neq G^{0}$, then for some
$\alpha\in\R\backslash\Q$, the cyclic order on $\Z$ induced by the
embedding $n\mapsto n\alpha+\Z$ of $\Z$ in $\R\big/\Z$ is definable
in $\cZ$. The proof employs the Gleason-Yamabe theorem for abelian
groups.
\end{abstract}

\maketitle

\section{Introduction}

In this paper we make another step towards classifying dp-minimal
expansions of $\left(\mathbb{Z},+,0,1\right)$. We refer to the introduction
of \cite{Alo2020} for a broader exposition. For two structures $\cM_{1}$,$\cM_{2}$
with the same underlying universe $M$, we say that $\cM_{1}$ is
a \emph{reduct} of $\cM_{2}$, and that $\cM_{2}$ is an \emph{expansion}
of $\cM_{1}$, if for every $k\in\N$, every subset of $M^{k}$ which
is definable in $\cM_{1}$ is also definable (with parameters) in
$\cM_{2}$. We say that $\cM_{1}$ and $\cM_{2}$ are \emph{interdefinable}
if $\cM_{1}$ is a reduct of $\cM_{2}$ and $\cM_{2}$ is a reduct
of $\cM_{1}$, and we say that $\cM_{2}$ is a \emph{proper expansion}
of $\cM_{1}$ if $\cM_{2}$ is an expansion of $\cM_{1}$ but $\cM_{1}$
and $\cM_{2}$ are not interdefinable. 

In \cite{ConantPillay2018} it was shown that $\left(\Z,+,0,1\right)$
has no stable proper expansions of finite dp-rank. Every known dp-minimal
proper expansion of $\left(\mathbb{Z},+,0,1\right)$ is a reduct of
a dp-minimal expansion of a structure from one of three specific families:

$\bullet$ The first such family consists of the single structure
$\left(\mathbb{Z},+,0,1,<\right)$. In \cite{Aschenbrenner_et_al_I_2015}
it was shown that $\left(\Z,+,0,1,<\right)$ has no dp-minimal proper
expansions. This was later significantly strengthened in \cite{DolichGoodrick2017}
(see \cref{no_strongly_dependent_expansions_of_the_order}). In
\cite{Conant2018_no_intermediate} it was shown that $\left(\Z,+,0,1,<\right)$
has no proper reducts which are proper expansions of $\left(\mathbb{Z},+,0,1\right)$.
Finally, \cite{Alo2020} classified $\left(\mathbb{Z},+,0,1,<\right)$
as the unique dp-minimal expansion of $\left(\mathbb{Z},+,0,1\right)$
which defines an infinite subset of $\mathbb{N}$, as well as the
unique dp-minimal expansion of $\left(\mathbb{Z},+,0,1\right)$ which
does not eliminate $\exists^{\infty}$.

$\bullet$ The second family consists of the structures $\left(\mathbb{Z},+,0,1,\preceq_{v}\right)$,
where $v$ is a generalized valuation. For every strictly descending
chain $\left(B_{i}\right)_{i<\omega}$ of subgroups of $\Z$ with
$B_{0}=\Z$, we can define a function $v:\Z\to\omega\cup\left\{ \infty\right\} $
by $v(x):=\max\left\{ i\in\omega\,:\,x\in B_{i}\right\} $. We call
such a function $v$ a generalized valuation on $\Z$. We also denote
by $\preceq_{v}$ the associated partial order, i.e., $a\preceq_{v}b$
if and only if $v\left(a\right)\leq v\left(b\right)$. When $p\in\N$
is prime and $B_{i}=p^{i}\Z$, $v$ is the usual $p$-adic valuation,
and we denote it by $v_{p}$ and the associated order by $\preceq_{p}$.
In \cite{AD19} it was shown that for every nonempty (possibly infinite)
set of primes $P\subseteq\mathbb{N}$, the structure $\left(\Z,+,0,1,\left\{ \preceq_{p}\right\} _{p\in P}\right)$
has dp-rank $\left\vert P\right\vert $. In particular, for a single
prime $p$, the structure $\left(\Z,+,0,1,\preceq_{p}\right)$ is
dp-minimal. This was generalized in \cite{Cla20}, where it was shown
that for every generalized valuation $v$, the structure $\left(\mathbb{Z},+,0,1,\preceq_{v}\right)$
is dp-minimal. In \cite{AD19} it was also shown that $\left(\Z,+,0,1,\preceq_{p}\right)$
has no proper reducts which are proper expansions of $\left(\mathbb{Z},+,0,1\right)$.

$\bullet$ The third family consists of the structures $\left(\mathbb{Z},+,0,1,C_{\alpha}\right)$,
where $\alpha\in\R\backslash\Q$ and $C_{\alpha}$ is the cyclic order
on $\mathbb{Z}$ induced by $\alpha$. Denote by $\cC$ the positively
oriented cyclic order on $\R\big/\Z$, i.e., for $u,v,w\in\left[0,1\right)$
we have $\cC\left(u+\Z,v+\Z,w+\Z\right)$ if and only if $u<v<w$
or $v<w<u$ or $w<u<v$. Let $\alpha\in\R\backslash\Q$ and let $\eta:\Z\rightarrow\R\big/\Z$
be given by $\eta\left(n\right):=n\alpha+\Z$. Then $C_{\alpha}$
is defined by setting $C_{\alpha}\left(a,b,c\right)$ if and only
if $\cC\left(\eta\left(a\right),\eta\left(b\right),\eta\left(c\right)\right)$.
In \cite{TW2023} it was shown that for every $\alpha\in\R\backslash\Q$,
the structure $\left(\mathbb{Z},+,0,1,C_{\alpha}\right)$ is dp-minimal.

These structures themselves are not the only dp-minimal expansions
of $\left(\mathbb{Z},+,0,1\right)$. As noted in \cite{TW2023}, the
Shelah expansion $\left(\mathbb{Z},+,0,1,C_{\alpha}\right)^{Sh}$
of $\left(\mathbb{Z},+,0,1,C_{\alpha}\right)$ is a proper expansion
of $\left(\mathbb{Z},+,0,1,C_{\alpha}\right)$. In \cite{Wal2020}
Walsberg constructed uncountably many proper dp-minimal expansions
of $\left(\mathbb{Z},+,0,1,C_{\alpha}\right)^{Sh}$. In the same paper,
Walsberg also constructed a proper dp-minimal expansion of $\left(\Z,+,0,1,\preceq_{p}\right)$.
In \cite{Cla20} it was shown that even if we expand $\left(\mathbb{Z},+,0,1,\preceq_{v}\right)$
by adding to the value sort all unary subsets and all monotone binary
relations, then the resulting structure is still dp-minimal. And,
of course, every reduct of any of these expansions is still dp-minimal.

These results showed that $\left(\mathbb{Z},+,0,1\right)$ has many
more dp-minimal expansions than was previously thought, and may give
the impression that the classification of dp-minimal expansions of
$\left(\mathbb{Z},+,0,1\right)$ is a lost cause. But, while a complete
classification might be beyond reach, we believe that the following
holds:
\begin{conjecture}
\label{main_classification_conjecture}Let $\cZ$ be a dp-minimal
proper expansion of $\left(\Z,+,0,1\right)$. Then exactly one of
the following holds:
\begin{itemize}
\item $\cZ$ is interdefinable with $\left(\Z,+,0,1,<\right)$.
\item There is $\alpha\in\R\backslash\Q$ such that $C_{\alpha}$ is definable
in $\cZ$.
\item There is a generalized valuation $v$ such that $\preceq_{v}$ is
definable in $\cZ$.
\end{itemize}
\end{conjecture}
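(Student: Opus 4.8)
The plan is to reduce the conjecture to the two main results of this paper together with a structural analysis of one remaining case. If $\cZ$ is interdefinable with $(\Z,+,0,1,<)$ we are in the first alternative, so assume it is not. Then the first main result applies: every infinite $\cZ$-definable subset of $\Z$ is generic in $\Z$. Let $G$ be a monster model of $\cZ$. If $G^{00}\neq G^{0}$, the second main result gives some $\alpha\in\R\setminus\Q$ with $C_{\alpha}$ definable in $\cZ$, which is the second alternative. Hence the entire remaining content of the conjecture is the case in which $\cZ$ is a proper expansion of $(\Z,+,0,1)$, is not interdefinable with $(\Z,+,0,1,<)$, and satisfies $G^{00}=G^{0}$; in this case one must produce a generalized valuation $v$ with $\preceq_{v}$ definable in $\cZ$.

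The exclusivity clause --- that at most one of the three alternatives holds --- I expect to be the routine part. If $\cZ$ is interdefinable with $(\Z,+,0,1,<)$ and also defined some $C_{\alpha}$ or some $\preceq_{v}$, then $(\Z,+,0,1,C_{\alpha})$, respectively $(\Z,+,0,1,\preceq_{v})$, would be a proper reduct of $(\Z,+,0,1,<)$ that is a proper expansion of $(\Z,+,0,1)$ --- using that neither structure defines an infinite subset of $\N$, by \cite{Alo2020} --- contradicting \cite{Conant2018_no_intermediate}. If $\cZ$ defined both some $C_{\alpha}$ and some $\preceq_{v}$, it would expand $(\Z,+,0,1,C_{\alpha},\preceq_{v})$, and a dp-rank computation in the spirit of \cite{AD19} --- building a two-dimensional dp-pattern from the ``independent'' cyclic-order and valuation data --- shows this structure has dp-rank at least $2$, contradicting dp-minimality.

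For the main case $G^{00}=G^{0}$, the natural strategy parallels the proof of the second main result: one attaches to $\cZ$ a limit (logic-topology) compactification of $\Z$, which is a locally compact abelian group, and applies the Gleason--Yamabe theorem. The hypothesis $G^{00}=G^{0}$ should force the connected component of this limit group to be trivial, so that it is profinite, and the genericity of every infinite definable set (the first main result) should be used to exclude any order-type behaviour and to show that the $\cZ$-definable sets are governed, in an appropriate sense, by $\cZ$-definable finite-index subgroups of $\Z$ --- a quantifier-elimination-type normal form. Since $\cZ$ is a \emph{proper} expansion of the pure group, infinitely many such subgroups must genuinely be needed; one would then organise them into a strictly descending chain $\Z=B_{0}\supsetneq B_{1}\supsetneq\cdots$ and use dp-minimality to show that this chain is sufficiently ``linear'' and that $\cZ$ in fact defines, uniformly in $i$, the relation $v(x)\leq v(y)$ for the generalized valuation $v$ associated with $(B_{i})_{i<\omega}$.

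The main obstacle is exactly this last step. Even granting the limit-group picture, passing from a profinite limit object back to an \emph{honestly definable} generalized valuation is delicate: one must locate a chain of definable finite-index subgroups whose associated quasi-order is itself definable --- not merely a countable union of definable conditions --- and one must rule out that the new structure of $\cZ$ is ``weaker than'' every such quasi-order or incomparable with all of them. Dealing with profinite limits more intricate than a single $\Z_{p}$, and controlling Shelah-expansion-type phenomena (as in the constructions of \cite{Wal2020}) so that they can neither hide nor destroy the valuation, are the places where input genuinely beyond the present paper seems to be needed; this is why the statement is offered as a conjecture rather than a theorem.
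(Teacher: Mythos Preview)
The statement you were asked to prove is \emph{Conjecture}~\ref{main_classification_conjecture}: the paper does not prove it and does not claim to. Immediately after stating it, the paper says only that one can ``try to prove'' it by treating each case separately, that \cite{Alo2020} handled the first case, and that the present paper handles the second case via Theorem~\ref{main_theorem_cyclic_order}. The third case (producing a definable generalized valuation when $G^{00}=G^{0}$) is left entirely open. So there is no ``paper's own proof'' to compare against.

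Your write-up is not a proof either, and to your credit you say so: the last paragraph explicitly identifies the step from a profinite $G/G^{00}$ back to an honestly definable $\preceq_{v}$ as the missing ingredient and explains why the statement is a conjecture. As a roadmap your reduction is exactly the one the paper suggests, and your identification of the remaining obstacle is accurate.

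Two comments on your exclusivity sketch. First, your argument that the first alternative excludes the second can be made cleaner using the paper itself: by Theorem~\ref{if_cyclic_order_is_definable_then_G00_neq_G0}, any expansion of $(\Z,+,0,1,C_{\alpha})$ has $G^{00}\neq G^{0}$, whereas for $(\Z,+,0,1,<)$ one has $G^{00}=G^{0}$; this avoids the mild circularity in your appeal to ``neither structure defines an infinite subset of $\N$''. Second, the claim that $(\Z,+,0,1,C_{\alpha},\preceq_{v})$ has dp-rank at least $2$ is plausible and the ict-pattern you have in mind is the natural one, but it is not in the paper and would need to be written out; it is not entirely routine, since one must check that the cyclic-order pattern and the valuation pattern are genuinely independent.
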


We can try to prove \cref{main_classification_conjecture} by classifying
each case separately. \cite{Alo2020} already dealt with the first
case. In this paper, we deal with the second case by proving:
\begin{thm}
\label{main_theorem_cyclic_order}Let $\mathcal{Z}$ be a dp-minimal
expansion of $\left(\mathbb{Z},+,0,1\right)$, and let $G$ be a monster
model. Suppose that $G^{00}\neq G^{0}$. Then for some $\alpha\in\R\backslash\Q$,
$C_{\alpha}$ is definable in $\cZ$.
\end{thm}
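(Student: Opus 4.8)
\emph{Overview and set-up.} The aim is to manufacture from the hypothesis a surjective group homomorphism $\psi\colon G\to\R\big/\Z$ which is continuous for the logic topology on $G\big/G^{00}$ (through which it factors), to set $\alpha:=\psi(1)$, and to recover the ternary relation $C_{\alpha}$ on $\Z$ from $\psi$ by a first-order formula of $\cZ$. First I note a reduction: for $\left(\Z,+,0,1\right)$ itself one has $G^{00}=G^{0}=\bigcap_{n}nG$, so the hypothesis $G^{00}\neq G^{0}$ forces $\cZ$ not to be interdefinable with $\left(\Z,+,0,1,<\right)$; hence the first main theorem of the paper applies, giving that every infinite $\cZ$-definable subset of $\Z$ is generic, and by \cite{Alo2020} that $\cZ$ defines no infinite subset of $\N$ --- these are the facts that will prevent a certain locally compact group from having an $\R$- or $\Z$-factor. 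Next, give $K:=G\big/G^{00}$ the logic topology, making it a compact Hausdorff abelian group; since in a saturated model a type-definable subgroup of finite index is definable, the open subgroups of $K$ are exactly the images of the definable finite-index subgroups of $G$, so intersecting them gives $K^{0}=G^{0}\big/G^{00}$, which by hypothesis is a \emph{nontrivial} compact connected abelian group.

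\emph{Extracting $\psi$.} Out of the generic definable subsets of $\Z$ supplied by the first theorem, together with the type-definable data witnessing $G^{00}\subsetneq G^{0}$, one builds a locally compact abelian group $\Gamma$ equipped with a homomorphism $\Z\to\Gamma$ of dense image; this is a stabilizer-type construction, and it is here that genericity is essential. By the Gleason--Yamabe theorem for abelian groups, $\Gamma$ has an open subgroup with a Lie quotient, necessarily of the form $\R^{a}\times\T^{b}\times\Z^{c}\times F$ with $F$ finite. A nonzero $\R$- or $\Z$-factor would, through the construction, produce a definable infinite subset of $\N$ (the preimage of a half-line) --- an infinite non-generic definable set --- contradicting the first theorem and \cite{Alo2020}; and $b=0$ would mean $K^{0}$ was not detected, contradicting the previous paragraph. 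Hence the quotient is $\T^{b}\times F$ with $b\geq1$, and composing with a coordinate projection $\T^{b}\times F\to\T=\R\big/\Z$ and with $\Z\to\Gamma$ yields $\psi\colon\Z\to\R\big/\Z$ with dense image; $\psi$ extends to $G\to\R\big/\Z$ with type-definable fibres, and density of $\psi(\Z)$ forces $\alpha:=\psi(1)\in\R\setminus\Q$.

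\emph{Definability of $C_{\alpha}$, and the main difficulty.} It remains to check that $\left\{ \left(a,b,c\right)\in\Z^{3}\,:\,C_{\alpha}\left(a,b,c\right)\right\}$ is definable in $\cZ$. The positively oriented region of $\cC$ on $\left(\R\big/\Z\right)^{3}$ is open with boundary contained in the union of the three diagonals, so its $\psi$-preimage is, off the diagonals, simultaneously a countable union and a countable intersection of $\cZ$-definable sets; combining this with the fact that the arcs $\psi^{-1}\!\left(\left(\psi(a),\psi(b)\right)\right)$ for $a,b\in\Z$ occur among the definable sets produced in the construction, one can write $C_{\alpha}$ as an actual formula of $\cZ$. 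The main obstacle throughout is the construction step of the second paragraph: assembling the definable and type-definable data attached to the failure $G^{00}\neq G^{0}$ into a genuine locally compact group on which Gleason--Yamabe bites, excluding the non-compact Lie factors, and arranging that the resulting circle-valued homomorphism is nontrivial on $\Z$ (so $\alpha$ is irrational) and gives rise to a first-order-definable, rather than merely type-definable, cyclic order.
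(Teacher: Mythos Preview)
Your overall architecture --- produce a surjective homomorphism $h\colon G\to\R/\Z$ continuous for the logic topology, read off $\alpha=h(1)$, then define $C_\alpha$ --- matches the paper's. But two of your three paragraphs contain real problems.

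\medskip
\textbf{Extracting $h$ is much easier than you make it.} There is no need for a ``stabilizer-type construction'' of an auxiliary locally compact group $\Gamma$, nor for genericity at this stage, nor for ruling out $\R$- or $\Z$-factors. The group $G/G^{00}$ is already \emph{compact} Hausdorff abelian. Apply Gleason--Yamabe directly to it: one gets a compact normal $K$ inside a prescribed neighbourhood of the identity with $\bigl(G/G^{00}\bigr)/K$ a compact abelian Lie group, hence automatically of the form $(\R/\Z)^{d}\times F$. The only issue is $d\ge 1$, and that follows from choosing the neighbourhood to contain no closed finite-index subgroup, which is exactly what $G^{00}\neq G^{0}$ provides. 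So your second paragraph misidentifies where the work lies.

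\medskip
\textbf{The real content is in your third paragraph, and it is missing.} You assert that ``the arcs $\psi^{-1}\bigl((\psi(a),\psi(b))\bigr)$ for $a,b\in\Z$ occur among the definable sets produced in the construction''. They do not, a priori: preimages of open arcs are only $\bigvee$-definable over $\Z$, and preimages of closed arcs only type-definable. Passing from this to an honest $\cZ$-formula is the entire difficulty, and you supply no mechanism. The paper's route is substantial and uses dp-minimality in a second, separate way beyond genericity:
\begin{itemize}
\item For any $\Z$-definable $D$ and $\epsilon=\epsilon_1-\epsilon_0$ with $(\epsilon_i)$ indiscernible over $\Z$, an inp-pattern argument shows $h\bigl(D\bigtriangleup(D\pm\epsilon)\bigr)$ is finite; this yields that $h(D)$ has only finitely many isolated points and that $D$ is almost invariant under arbitrarily small $\Z$-translations.
\item Combining this with genericity (now used in earnest) one engineers a $\Z$-definable $D$ and finitely many infinitesimal translates whose union traps a set of the form $h^{-1}\bigl((0,u)\bigr)\cap\Z$ as an \emph{externally} definable set; from that, $C_\alpha$ is definable in $\cZ^{Sh}$.
\item Finally, the upgrade from $\cZ^{Sh}$ to $\cZ$ is not automatic: the paper invokes the Simon--Walsberg structure theorem for unary sets in dp-minimal expansions of $(\Z,+,C_\alpha)$, applied to a suitable $\cZ$-definable set, to extract a genuine $\cZ$-definable short arc.
\end{itemize}
Your last sentence concedes that bridging ``type-definable'' to ``first-order definable'' is the main obstacle; but without the inp-pattern lemma, the external-definability step, and the Simon--Walsberg input (or some replacement for each), the argument does not go through.
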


We also show that the converse holds. In order to prove \cref{main_theorem_cyclic_order},
we first prove:
\begin{thm}
\label{main_theorem_genericity}Let $\mathcal{Z}$ be a dp-minimal
expansion of $\left(\mathbb{Z},+,0,1\right)$ which is not interdefinable
with $\left(\mathbb{Z},+,0,1,<\right)$. Then every infinite subset
of $\Z$ definable in $\mathcal{Z}$ is generic in $\Z$.
\end{thm}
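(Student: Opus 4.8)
The plan is to argue by contrapositive in the following sense: if $\cZ$ defines some infinite subset $X\subseteq\Z$ which is \emph{not} generic, then I want to recover enough order-like structure to show $\cZ$ is interdefinable with $\left(\Z,+,0,1,<\right)$. Recall that $X$ is generic in $\Z$ if finitely many translates of $X$ cover $\Z$; so non-genericity of $X$ means that for every $n$ and every $a_{1},\dots,a_{n}\in\Z$, the set $\Z\setminus\bigcup_{i}\left(a_{i}+X\right)$ is infinite. By dp-minimality, $\cZ$ eliminates $\exists^{\infty}$ unless it is already interdefinable with $\left(\Z,+,0,1,<\right)$ by the results cited from \cite{Alo2020}; so I may assume $\exists^{\infty}$ is eliminated, and in particular every infinite definable set is "uniformly infinite" in a way that will let me run a counting/pigeonhole argument. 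The first step, then, is to fix a counterexample $X$, WLOG $0\in X$, and pass to the monster model $G$; there $X$ gives a definable set $X^{*}\subseteq G$ which is not generic, hence (working in a sufficiently saturated model) there is a coset of $G^{00}$, or at least a "large" externally-definable piece, disjoint from boundedly many translates of $X^{*}$.

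The core of the argument should be to extract from the non-generic $X$ a definable linear(-ish) preorder. Concretely, I would consider, for the minimal-complexity non-generic definable set $X$, the family of its translates and look at the "trace" $\{a\in\Z : b\in a+X\}$ as $b$ varies; non-genericity forces these traces to be pairwise non-cofinite, and dp-minimality (the fact that one cannot have an independent "array" of formulas) should force a large amount of linearity among them — this is the same mechanism by which dp-minimal ordered abelian groups look like $\Z$ or $\R$. The aim is to produce a definable $\emptyset$-type-definable or eventually-definable convex equivalence relation whose quotient is a dense or discrete linear order, and then to invoke \cref{no_strongly_dependent_expansions_of_the_order} (the Dolich–Goodrick result that $\left(\Z,+,0,1,<\right)$ has no strongly dependent proper expansions) together with Conant's theorem that $\left(\Z,+,0,1,<\right)$ has no proper reducts strictly between it and $\left(\Z,+,0,1\right)$, to conclude that once $\cZ$ defines such an order it must be interdefinable with $\left(\Z,+,0,1,<\right)$, contradiction.

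I expect the main obstacle to be precisely this extraction step: turning a single non-generic definable subset into a genuine definable linear order on $\Z$ (or on a definable quotient), as opposed to merely a partial or "approximate" order. The natural candidate is to define $a\preceq b$ by something like "$(b-a)+X$ is, up to a bounded error, contained in $X$", or to use Shelah's / the group-theoretic machinery around $G^{00}$ and the logic topology to get a definable homomorphism $\Z\to(\R,+)$ or $\Z\to(\Z_{p},+)$ and rule out the $p$-adic case by the non-genericity hypothesis (a ball in $\Z_{p}$ \emph{is} generic, so its preimage would be generic). If one lands in the archimedean case $\Z\hookrightarrow\R$, one either gets $\Z\hookrightarrow\Z$ (giving the order back) or a dense embedding $n\mapsto n\alpha+\Z$-type map — but that case produces the \emph{cyclic} order $C_{\alpha}$, whose nonempty convex-type subsets (arcs) \emph{are} generic, so a non-generic $X$ cannot be produced there either; hence the archimedean case collapses to the honest order on $\Z$. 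Making "up to bounded error" precise and definable, and checking that the resulting relation is actually a total order (not just a total preorder with large classes), is where the real work and the dp-minimality-driven combinatorics will go; the rest is assembling citations.
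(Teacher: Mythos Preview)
Your proposal is not yet a proof: the entire content of the theorem sits in what you call ``the extraction step'', and you do not carry it out. You correctly identify the contrapositive shape (non-generic definable set $\Rightarrow$ recover the order $\Rightarrow$ interdefinable with $(\Z,+,0,1,<)$ via Dolich--Goodrick), and the paper follows exactly this shape. But your proposed mechanisms for the extraction are either vague or circular. The relation $a\preceq b \iff (b-a)+X\subseteq X$ ``up to bounded error'' has no reason to be total or antisymmetric, and you give no argument that dp-minimality forces it to be. The $G^{00}$/logic-topology route is worse: in this paper the homomorphism $G\twoheadrightarrow\R/\Z$ is produced \emph{from} the hypothesis $G^{00}\neq G^{0}$, and the genericity theorem you are trying to prove is an \emph{input} to that later argument (see \cref{genericity_in_the_monster_with_translations_in_Z}). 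There is no mechanism by which a single non-generic $X$ hands you a continuous homomorphism to $\R$ or $\Z_{p}$, nor any reason $X$ should be a preimage of a ball or an arc under such a map, so the dichotomy ``balls/arcs are generic, hence the only surviving case is the archimedean embedding $\Z\hookrightarrow\Z$'' has no traction.

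The paper's actual argument is concrete and quite different in flavor. The combinatorial core is \cref{dp_minimal_implies_bounded_two_sided_gaps}: in a dp-minimal expansion of $(\Z,+,0,1)$, every infinite definable set has \emph{bounded two-sided gaps} (for some $N$, every point of the set has another point within distance $N$). This is proved by hand: assuming unbounded two-sided gaps, one builds a sequence $(b_i)$ of increasingly isolated points of $A$, passes to an indiscernible sequence in a saturated model (for the expanded language with $<$), and exhibits an explicit inp-pattern of depth $2$ using the two families $\{(A^{*}+a_{2i+1})\setminus(A^{*}+a_{2i})\}_{i}$ and the analogous family at the $\omega$-th block. Once bounded two-sided gaps is in hand, \cref{bdd_two_sided_gaps_for_subsets_but_not_generic_implies_either_N_is_def_or_IP} (which repackages \cite[Proposition~4.7]{Alo2020}) gives a clean dichotomy: either $\N$ is definable, or the formula $y-x\in A$ has IP. NIP kills the second option, so $<$ is definable, and \cref{no_strongly_dependent_expansions_of_the_order} finishes. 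If you want to salvage your write-up, replace the speculative order-extraction and $G^{00}$ discussion with this two-sided-gaps argument; that is where the ``dp-minimality-driven combinatorics'' actually lives.
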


This is a general theorem about dp-minimal expansion of $\left(\mathbb{Z},+,0,1\right)$,
and we expect it to also play a key role in dealing with the third
and last case of \cref{main_classification_conjecture}.

\section{Preliminaries}

\subsection{Dp-minimality}
\begin{defn}
Let $T$ be a theory, and let $\kappa$ be a cardinal. An \emph{ict-pattern
of depth $\kappa$ }consists of:

\begin{itemize}
\item a collection of formulas $\left(\phi_{\alpha}\left(x;y_{\alpha}\right)\,:\,\alpha<\kappa\right)$,
with $\left\vert x\right\vert =1$, and
\item an array $\left(b_{\alpha,i}\,:\,i<\omega,\,\alpha<\kappa\right)$
of tuples in some model $\cM$ of $T$, with $|b_{\alpha,i}|=|y_{\alpha}|$
\end{itemize}
such that for every $\eta:\kappa\to\omega$ there exists an element
$a_{\eta}\in\cM$ such that 
\[
\cM\models\phi_{\alpha}\left(a_{\eta};b_{\alpha,i}\right)\iff\eta\left(\alpha\right)=i
\]

We define $\kappa_{ict}=\kappa_{ict}\left(T\right)$ as the minimal
$\kappa$ such that there does not exist an ict-pattern of depth $\kappa$,
and define $\kappa_{ict}=\infty$ if there is no such $\kappa$. For
a structure $\cM$, we let $\kappa_{ict}\left(\cM\right):=\kappa_{ict}\left(Th\left(\cM\right)\right)$.
\end{defn}

\begin{defn}
A theory $T$ is \emph{dp-minimal} if $\kappa_{ict}\left(T\right)\le2$.
A structure $\cM$ is \emph{dp-minimal} if $Th\left(\cM\right)$ is.
\end{defn}

\begin{fact}[{\cite[Observation 4.13]{Simon_2015}}]
\label{NIP_equivalent_to_non_infinite_dp_rank}A theory $T$
is NIP if and only if $\kappa_{ict}\left(T\right)<\infty$.
\end{fact}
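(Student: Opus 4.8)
Negating both sides, it is equivalent to show that $T$ has the independence property (IP) if and only if $\kappa_{ict}(T)=\infty$, and I would prove the two implications separately. First suppose $\kappa_{ict}(T)=\infty$; I claim some formula has IP. Pick $\kappa:=|T|^{+}$. By hypothesis there is an ict-pattern of depth $\kappa$, say with formulas $\left(\phi_{\alpha}(x;y_{\alpha})\,:\,\alpha<\kappa\right)$ (each with $|x|=1$), array $\left(b_{\alpha,i}\,:\,i<\omega,\,\alpha<\kappa\right)$ in some $\cM\models T$, and witnesses $a_{\eta}$. Up to renaming the parameter variables there are at most $|T|<\kappa$ distinct formulas among the $\phi_{\alpha}$, so by pigeonhole there is an infinite $I\subseteq\kappa$ and a single formula $\phi(x;y)$ with $\phi_{\alpha}=\phi$ for all $\alpha\in I$. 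Enumerate $\alpha_{0},\alpha_{1},\dots\in I$ and put $c_{j}:=b_{\alpha_{j},0}$. Given any $S\subseteq\omega$, choose $\eta:\kappa\to\omega$ with $\eta(\alpha_{j})=0$ for $j\in S$ and $\eta(\alpha_{j})=1$ for $j\notin S$; the ict-pattern then yields $\cM\models\phi(a_{\eta};c_{j})\iff\eta(\alpha_{j})=0\iff j\in S$, so $\left(c_{j}\,:\,j<\omega\right)$ witnesses that $\phi$ has IP. Since $|x|=1$, this already exhibits IP for a formula in a single object variable, so this direction needs no further input.

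For the converse, assume $T$ has IP; I would establish $\kappa_{ict}(T)=\infty$ in three steps. First, invoke the standard reduction of NIP to a single object variable: if $T$ has IP then some formula $\phi(x;y)$ with $|x|=1$ has IP. Second, such a $\phi$ has infinite VC dimension, so for every infinite cardinal $\kappa$ the partial type over $\emptyset$ in the variables $\left(v_{j}\,:\,j<\kappa\right)$ and $\left(w_{S}\,:\,S\subseteq\kappa\right)$ consisting of the formulas $\phi(w_{S};v_{j})$ for $j\in S$ and $\neg\phi(w_{S};v_{j})$ for $j\notin S$ is consistent: any finite fragment constrains only finitely many $v_{j}$, which can be shattered by $\phi$ in every model of $T$. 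It is therefore realized in some model of $T$, and each $w_{S}$ is a single element because $|x|=1$. Third, fix an infinite $\kappa$ and reindex such a shattered family, of size $\kappa\cdot\omega=\kappa$, as an array $\left(b_{\alpha,i}\,:\,\alpha<\kappa,\,i<\omega\right)$; then setting $\phi_{\alpha}:=\phi$ for all $\alpha<\kappa$ and letting $a_{\eta}$ be the single element that shatters off $S_{\eta}:=\left\{ (\alpha,i)\,:\,i=\eta(\alpha)\right\}$ for each $\eta:\kappa\to\omega$ gives an ict-pattern of depth $\kappa$. As $\kappa$ was an arbitrary infinite cardinal, $\kappa_{ict}(T)=\infty$.

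I expect the only genuinely nontrivial ingredient to be the reduction to a single object variable in the second implication: this is the point where one leaves pure compactness and combinatorics (it proceeds by extracting indiscernible sequences and Ramsey-type arguments), and it is essential, since without it one could only conclude that every single-variable formula is NIP, a priori weaker than $T$ being NIP. The remaining pieces — the pigeonhole applied to a pattern of depth $|T|^{+}$, the compactness argument producing arbitrarily large shattered families, and the bookkeeping that repackages a shattered $\kappa\times\omega$ array as an ict-pattern — are routine.
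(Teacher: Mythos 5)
Your argument is correct: the pigeonhole extraction of a single formula from a pattern of depth $\left\vert T\right\vert^{+}$ gives IP, and conversely the single-variable IP formula plus compactness yields shattered $\kappa\times\omega$ arrays and hence ict-patterns of every depth, with the reduction of IP to a formula with $\left\vert x\right\vert=1$ correctly flagged as the one nontrivial external ingredient. The paper itself gives no proof --- it cites this as \cite[Observation 4.13]{Simon_2015} --- and your argument is essentially the standard one underlying that citation.
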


\begin{defn}
Let $T$ be a theory, and let $\kappa$ be a cardinal. An \emph{inp-pattern
of depth $\kappa$ }consists of:

\begin{itemize}
\item a collection of formulas $\left(\phi_{\alpha}\left(x;y_{\alpha}\right)\,:\,\alpha<\kappa\right)$,
with $\left\vert x\right\vert =1$, and
\item an array $\left(b_{\alpha,i}\,:\,i<\omega,\,\alpha<\kappa\right)$
of tuples in some model of $T$, with $|b_{\alpha,i}|=|y_{\alpha}|$
\end{itemize}
such that: 
\begin{enumerate}
\item for each $\alpha<\kappa$ there exists $k_{\alpha}\in\N$ for which
the row $\left\{ \phi_{\alpha}\left(x;b_{\alpha,i}\right)\,:\,i<\omega\right\} $
is $k_{\alpha}$-inconsistent, and
\item for every $\eta:\kappa\to\omega$ the path $\left\{ \phi_{\alpha}\left(x;b_{\alpha,\eta\left(\alpha\right)}\right)\,:\,\alpha<\kappa\right\} $
is consistent.
\end{enumerate}
We define $\kappa_{inp}=\kappa_{inp}\left(T\right)$ as the minimal
$\kappa$ such that there does not exist an inp-pattern of depth $\kappa$,
and define $\kappa_{inp}=\infty$ if there is no such $\kappa$.
\end{defn}

\begin{fact}[{\cite[Proposition 10]{Adl}}]
\label{k_inp_eq_k_ict_under_nip}For every theory $T$ we have
$\kappa_{inp}\le\kappa_{ict}$, and if $T$ is NIP then $\kappa_{inp}=\kappa_{ict}$.
\end{fact}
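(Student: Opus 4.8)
The plan is to reduce both assertions to a single scheme: if there is a pattern of one kind of depth $\lambda$, then there is a pattern of the other kind of depth $\lambda$. Granting this, if every inp-pattern of depth $\lambda$ yields an ict-pattern of depth $\lambda$ then $\kappa_{ict}>\lambda$ for every $\lambda<\kappa_{inp}$, so $\kappa_{inp}\le\kappa_{ict}$; symmetrically the reverse implication gives $\kappa_{ict}\le\kappa_{inp}$. Both implications I would obtain by passing to mutually indiscernible arrays and manipulating the patterns, following Adler. Whenever I replace a given array $(b_{\alpha,i})_{\alpha<\lambda,\,i<\omega}$ by a mutually indiscernible one, I use the standard extraction lemma (Ramsey for arrays plus compactness) in the form that keeps every finitary statement true of the old array — so $k_\alpha$-inconsistency of rows, consistency of finite sub-paths, and consistency of finite sub-patterns all survive, and compactness then recovers the infinitary versions.

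For $\kappa_{inp}\le\kappa_{ict}$: start from an inp-pattern $(\phi_\alpha(x;y_\alpha),(b_{\alpha,i}))_{\alpha<\lambda}$ and extract so that the rows $I_\alpha:=(b_{\alpha,i})_{i<\omega}$ become mutually indiscernible while staying an inp-pattern. Let $a$ realize the path $\{\phi_\alpha(x;b_{\alpha,0}):\alpha<\lambda\}$ of the constant function $0$. By $k_\alpha$-inconsistency, $\phi_\alpha(a;b_{\alpha,i})$ holds for fewer than $k_\alpha$ values of $i$ and so fails for all large $i$; hence $\phi_\alpha(a;b_{\alpha,i})$ is non-constant in $i$, so $I_\alpha$ is not indiscernible over $a$, a fortiori not over $a\cup\bigcup_{\beta\neq\alpha}I_\beta$. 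We thus have $\lambda$ mutually indiscernible sequences together with a single element over which none of them stays indiscernible; the standard characterization of dp-rank (see, e.g., \cite{Simon_2015}) converts such a configuration into an ict-pattern of depth $\lambda$.

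For the reverse implication when $T$ is NIP — equivalently, by \cref{NIP_equivalent_to_non_infinite_dp_rank}, when $\kappa_{ict}<\infty$ — start from an ict-pattern $(\phi_\alpha,(b_{\alpha,i}))_{\alpha<\lambda}$ and extract so the rows are mutually indiscernible while staying an ict-pattern. Put $\psi_\alpha(x;y\,y'):=\phi_\alpha(x;y)\wedge\neg\phi_\alpha(x;y')$ and $c_{\alpha,i}:=(b_{\alpha,2i},b_{\alpha,2i+1})$. Given $\eta:\lambda\to\omega$, apply the ict-property to $\alpha\mapsto 2\eta(\alpha)$: the witness satisfies $\phi_\alpha(x;b_{\alpha,2\eta(\alpha)})$ and fails every other $\phi_\alpha(x;b_{\alpha,j})$, so it satisfies $\psi_\alpha(x;c_{\alpha,\eta(\alpha)})$ for all $\alpha$, and the $\eta$-path is consistent. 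For $k$-inconsistency of row $\alpha$: if some element $a$ satisfies $\psi_\alpha(x;c_{\alpha,i})$ for $i=i_1<\dots<i_m$, then along the indiscernible subsequence $b_{\alpha,2i_1},b_{\alpha,2i_1+1},\dots,b_{\alpha,2i_m},b_{\alpha,2i_m+1}$ the formula $\phi_\alpha(x;-)$ changes truth value at $a$ at least $2m-1$ times; since $T$ is NIP, $\phi_\alpha$ has a finite bound on the number of alternations along any indiscernible sequence, which bounds $m$ and supplies $k_\alpha$. So $(\psi_\alpha,(c_{\alpha,i}))_{\alpha<\lambda}$ is an inp-pattern of depth $\lambda$.

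I expect the first implication to be the main obstacle. In the NIP direction the passage is soft once the alternation bound is available — the difference formula $\phi(x;y)\wedge\neg\phi(x;y')$ is precisely designed to turn bounded alternation into row-inconsistency. The first implication instead has to manufacture the rigid exact-fit behaviour of an ict-pattern out of the much weaker inp-data, and the real content is hidden in the cited equivalence between the ict-pattern definition of dp-rank and its formulation through families of mutually indiscernible sequences; were I not to quote it, I would reprove it by extracting, from the failure of indiscernibility of each $I_\alpha$ over $a\cup\bigcup_{\beta\neq\alpha}I_\beta$, a witnessing formula together with parameters, and then using mutual indiscernibility to align these witnesses — one row at a time — into a single ict-array.
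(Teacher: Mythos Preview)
The paper does not prove this statement: it is recorded as a \emph{Fact} with a citation to \cite{Adl} and no argument is given. So there is nothing in the paper to compare your proposal against.

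That said, your argument is correct and is essentially Adler's original proof. The passage from an inp-pattern to an ict-pattern via extraction of mutually indiscernible rows, a single realization of the $0$-path, and the equivalence between ict-patterns and the ``$\lambda$ mutually indiscernible sequences none indiscernible over a point'' formulation of dp-rank is exactly the standard route; your remark that the real content lies in this last equivalence is accurate. The NIP direction via the difference formula $\phi(x;y)\wedge\neg\phi(x;y')$ and the uniform alternation bound along indiscernible sequences is also the standard argument, and your count of $2m-1$ alternations is right. One small comment: in the second direction the extraction to mutually indiscernible rows is needed only to make each row indiscernible so that the alternation bound applies; mutual indiscernibility across rows plays no role there.
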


\subsection{Connected components and compact quotients}

Let $G$ be a $\emptyset$-definable group in a theory $T$.
\begin{defn}
Let $H$ be a subgroup of $G$ type-definable over a small set $A$.
If the set
\[
\left\{ \left[G\left(\cM\right):H\left(\cM\right)\right]\,:\,A\subseteq\cM\models T\right\} 
\]
is bounded, we say that \emph{$H$ has bounded index in $G$}. The
\emph{index} of $H$ in $G$ is the supremum of this set.
\end{defn}

\begin{rem}
If $H$ has bounded index in $G$, then the supremum is at most $2^{\left\vert T\right\vert }$,
and is obtained whenever $\cM$ is saturated enough. If in addition
$H$ is definable, then $\left[G\left(\cM\right):H\left(\cM\right)\right]$
is finite and does not depend on $\cM$.
\end{rem}

\begin{defn}
Let $A$ be a small set. Define: 
\begin{alignat*}{2}
 & G_{A}^{0} &  & :=\bigcap\left\{ H\,:\,H\text{ is an }A\text{-definable subgroup of finite index in }G\right\} \\
 & G_{A}^{00} &  & :=\bigcap\left\{ H\,:\,H\text{ is an }A\text{-type-definable subgroup of bounded index in }G\right\} 
\end{alignat*}
\end{defn}

Both $G_{A}^{0}$ and $G_{A}^{00}$ are $A$-type-definable and have
bounded index in $G$ (hence $G_{A}^{00}$ is the smallest such subgroup).
\begin{defn}
If $G_{A}^{00}$ (resp. $G_{A}^{0}$) does not depend on $A$, we
denote it by $G^{00}$ (resp. $G^{0}$) and say that $G^{00}$ (resp.
$G^{0}$) exists.
\end{defn}

If $G^{00}$ (resp. $G^{0}$) exists, then it is the intersection
of all type-definable (resp. definable) subgroups of $G$ of bounded
(resp. finite) index, and is a normal subgroup of $G$. If $G^{00}$
exists, then $G^{0}$ exists as well.
\begin{fact}[\cite{She2008}]
If $T$ is NIP then $G^{00}$ exists.
\end{fact}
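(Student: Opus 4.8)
This is Shelah's theorem, and I would follow the shape of his argument; NIP enters through bounded alternation along indiscernible sequences, that is, through $\kappa_{ict}(T)<\infty$. Fix a monster model $\fC$ of $T$. First a reduction: since $A'\subseteq A$ forces $G_{A}^{00}\subseteq G_{A'}^{00}$, and since any subgroup type-definable over $\mathrm{bdd}(\emptyset)$ is in particular type-definable over every small set $A$ (so $G_{A}^{00}\subseteq G_{\mathrm{bdd}(\emptyset)}^{00}$), it suffices to prove that every bounded-index subgroup $H\le G$ which is type-definable over a small set contains the fixed subgroup $G_{\mathrm{bdd}(\emptyset)}^{00}$. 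Granting this, $G_{A}^{00}\supseteq G_{\mathrm{bdd}(\emptyset)}^{00}$ for every $A$, so all the $G_{A}^{00}$ coincide, which is exactly the statement that $G^{00}$ exists.

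To prove this I would try to locate, inside $H$, a bounded-index subgroup that is $\mathrm{Aut}(\fC)$-invariant and type-definable over a small set. Any such subgroup is automatically type-definable over $\mathrm{bdd}(\emptyset)$ --- its canonical parameter is fixed by $\mathrm{Aut}(\fC)$ --- hence contains $G_{\mathrm{bdd}(\emptyset)}^{00}$, which finishes the argument. The obvious candidate is $K:=\bigcap\{\sigma(H)\,:\,\sigma\in\mathrm{Aut}(\fC)\}\subseteq H$, which is $\mathrm{Aut}(\fC)$-invariant; the difficulty is that $H$, although type-definable over a small set $C$, in general has unboundedly many conjugates, so a priori $K$ is neither of bounded index nor type-definable. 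This is where NIP is used. In the case where $H$ is an intersection $\bigcap_{b\in X}\phi(\fC;b)$ of subgroups from a single uniformly definable family, $K$ is again such an intersection, and the Baldwin--Saxl argument applies: if for every $N$ there were parameters $b_{0},\dots,b_{N}$ with $\bigcap_{i\le N}\phi(\fC;b_{i})$ irredundant, pick $g_{j}\in\bigcap_{i\ne j}\phi(\fC;b_{i})\setminus\phi(\fC;b_{j})$; then $\prod_{j\notin S}g_{j}\in\phi(\fC;b_{i})$ if and only if $i\in S$, so instances of $\phi$ shatter $\{b_{0},\dots,b_{N}\}$, which is impossible in a NIP theory once $N$ is large. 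Hence the family admits a uniform bound on irredundant sub-intersections, $K$ equals a finite sub-intersection, is type-definable over a small set and has bounded index, and we are done.

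The real work is to run this argument for an arbitrary type-definable $H$, whose approximating definable sets need not be subgroups. Here one writes $H=\bigcap_{n<\omega}D_{n}$ with each $D_{n}$ symmetric, $C$-definable, containing the identity, and satisfying $D_{n+1}D_{n+1}D_{n+1}\subseteq D_{n}$; then, instead of intersecting all conjugates of $H$ at once, one intersects its conjugates along a Morley sequence $(C_{i})_{i<\omega}$ of copies of $C$ in a global coheir of $\mathrm{tp}(C/\cM_{0})$, for a small model $\cM_{0}\supseteq C$. For each fixed $n$, the bounded-alternation property of the defining formula of $D_{n}$ --- this is where $\kappa_{ict}(T)<\infty$ is invoked --- controls how $\bigcap_{i<\omega}D_{n}(C_{i})$ changes as the sequence is lengthened; assembling these bounds over all $n$ yields a bounded-index subgroup $H_{*}\subseteq H$, type-definable over the sequence. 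The crucial step, and the main obstacle, is to show that the alternation bounds force $H_{*}$ not to depend on the choices involved (the Morley sequence, and the model $\cM_{0}$), so that $H_{*}$ is in fact $\mathrm{Aut}(\fC)$-invariant; this is the technical heart of Shelah's proof. Once it is established, $H_{*}$ is a bounded-index $\mathrm{Aut}(\fC)$-invariant subgroup contained in $H$, so by the second paragraph $G_{\mathrm{bdd}(\emptyset)}^{00}\subseteq H_{*}\subseteq H$, which by the first paragraph completes the proof.
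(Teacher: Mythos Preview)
The paper does not prove this statement; it is recorded as a Fact with a citation to \cite{She2008} and no argument is given, so there is nothing in the paper to compare your attempt against. Your sketch is a faithful high-level outline of Shelah's proof. One small wrinkle in your first paragraph: the assertion that a subgroup type-definable over $\mathrm{bdd}(\emptyset)$ is ``in particular type-definable over every small set $A$'' is not correct as stated, since $\mathrm{bdd}(\emptyset)$ need not be contained in $A$. The conclusion $G_{A}^{00}\subseteq G_{\mathrm{bdd}(\emptyset)}^{00}$ is nonetheless true, but for a different reason: automorphisms of $\fC$ fix $\mathrm{bdd}(\emptyset)$ setwise, so $G_{\mathrm{bdd}(\emptyset)}^{00}$ is $\mathrm{Aut}(\fC)$-invariant and therefore $\emptyset$-type-definable, hence $A$-type-definable for every $A$. (In fact one may as well run the reduction with $G_{\emptyset}^{00}$ in place of $G_{\mathrm{bdd}(\emptyset)}^{00}$ throughout.) The remainder of your outline --- Baldwin--Saxl for the single-formula case, and the Morley-sequence construction combined with alternation bounds for a general type-definable $H$ --- matches the shape of the known proof, with the hardest step (independence of the constructed subgroup from the auxiliary choices) correctly flagged but left as a black box.
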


Let $A$ be a small set, and $H$ an $A$-type-definable subgroup
of $G$ of bounded index. Then $G\left(\fC\right)\big/H\left(\fC\right)$
does not depend on the choice of the monster $\fC$, and we denote
it simply by $G\big/H$.

Let $\cM\supseteq A$ be a small model. We define a topology (called
the \emph{logic topology}) on $G\big/H$ by saying that $C\subseteq G\big/H$
is closed if and only if $\pi^{-1}\left(C\right)$ is $\cM$-type-definable,
where $\pi:G\rightarrow G\big/H$ is the quotient map.
\begin{fact}[{\cite[8.1.5]{Simon_2015}}]
This topology does not depend on $\cM$. In this topology, $G\big/G^{0}$
and $G\big/G^{00}$ are compact Hausdorff topological groups.
\end{fact}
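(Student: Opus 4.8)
The plan is to establish the three assertions for $G\big/H$ with $H$ an arbitrary $A$-type-definable subgroup of bounded index, and then read off the statement by taking $H=G^{0}$ and $H=G^{00}$ (which are $A$-type-definable and normal): that the logic topology does not depend on the small model $\cM\supseteq A$; that $G\big/H$ is quasi-compact and Hausdorff; and that, when $H$ is normal, $G\big/H$ is a topological group. Following the usual convention, I take the small models $\cM\supseteq A$ in play to be $\kappa$-saturated for $\kappa$ larger than the relevant cardinals, so that $G\left(\cM\right)\to G\big/H$ is onto and every coset of $H$ has a representative in $\cM$. Write $\pi\colon G\to G\big/H$ for the quotient map; note that $\pi^{-1}\left(C\right)$ and its complement are always unions of cosets of $H$, and that a coordinate projection of a set type-definable over a small parameter set is again type-definable over that set, by saturation of $\fC$.

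Quasi-compactness needs nothing about the group structure. If $\mathcal{F}$ is a family of closed subsets of $G\big/H$ with the finite intersection property, write $\pi^{-1}\left(F\right)=p_{F}\left(\fC\right)$ for a partial type $p_{F}$ over $\cM$; from $\pi^{-1}\left(F_{1}\right)\cap\dots\cap\pi^{-1}\left(F_{n}\right)=\pi^{-1}\left(F_{1}\cap\dots\cap F_{n}\right)\neq\emptyset$ it follows that $\bigcup_{F\in\mathcal{F}}p_{F}$ is a finitely satisfiable partial type over the small set $\cM$, hence realized, and any realization maps into $\bigcap\mathcal{F}$. For Hausdorffness, let $E\left(x,y\right)$ be the ($A$-type-definable) equivalence relation ``$x$ and $y$ lie in the same coset of $H$'', written $E=\bigcap_{i}\epsilon_{i}\left(\fC^{2}\right)$ with the $\epsilon_{i}$ over $A$; a routine compactness argument lets us take each $\epsilon_{i}$ reflexive and symmetric and so that for each $i$ there is $j$ with $\epsilon_{j}\circ\epsilon_{j}\subseteq\epsilon_{i}$. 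Given two distinct cosets, choose representatives $a,b\in\cM$, pick $i_{0}$ with $\left(a,b\right)\notin\epsilon_{i_{0}}\left(\fC^{2}\right)$ and $j$ with $\epsilon_{j}\circ\epsilon_{j}\subseteq\epsilon_{i_{0}}$, and set $U:=\left\{ \pi\left(x\right):xH\subseteq\epsilon_{j}\left(a,\fC\right)\right\}$ and $V:=\left\{ \pi\left(x\right):xH\subseteq\epsilon_{j}\left(b,\fC\right)\right\}$. Then $U\ni\pi\left(a\right)$ and $V\ni\pi\left(b\right)$ (because $E\subseteq\epsilon_{j}$), $U\cap V=\emptyset$ (a common point $\pi\left(y\right)$ would give $\epsilon_{j}\left(a,y\right)$ and $\epsilon_{j}\left(b,y\right)$, hence $\epsilon_{i_{0}}\left(a,b\right)$ by symmetry of $\epsilon_{j}$), and $U,V$ are open because their complements pull back under $\pi$ to the $E$-saturations of $\neg\epsilon_{j}\left(a,\fC\right)$ and $\neg\epsilon_{j}\left(b,\fC\right)$, which are type-definable over $\cM$. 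So $G\big/H$ is compact Hausdorff, in particular $T_{1}$.

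Independence from $\cM$: given small $\cM_{1},\cM_{2}\supseteq A$, fix a small $\cM_{3}\prec\fC$ with $\cM_{1}\cup\cM_{2}\subseteq\cM_{3}$; it suffices to show that if $X:=\pi^{-1}\left(C\right)$ is type-definable over $\cM_{3}$ then it is already type-definable over $\cM_{1}$. For $b\notin X$, the coset $bH$ is contained in $G\backslash X$ and is type-definable over $\cM_{1}$ (it meets $\cM_{1}$); since $bH$ and $X$ are disjoint with $bH$ type-definable over $\cM_{1}$ and $X$ over $\cM_{3}$, compactness yields a single $\cM_{1}$-formula $\chi$ with $bH\subseteq\chi\left(\fC\right)$ and $\chi\left(\fC\right)\cap X=\emptyset$. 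Letting $b$ range over $G\backslash X$, the resulting formulas $\neg\chi$ form a partial type over $\cM_{1}$ with solution set $X$.

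It remains to see that $G\big/H$ is a topological group for normal $H$. The inversion map is continuous since the $\pi$-preimage of a closed $C$ is $\left\{ x:x^{-1}\in\pi^{-1}\left(C\right)\right\}$, type-definable over $\cM$; left and right translation by any $g$ are continuous since the relevant $\pi$-preimage is the union of $H$-cosets $g^{-1}\pi^{-1}\left(C\right)$ (resp.\ $\pi^{-1}\left(C\right)g^{-1}$), type-definable over a small model containing $\cM$ and $g$, hence over $\cM$ by the independence just proved. For joint continuity of multiplication one compares the logic topology on $\left(G\big/H\right)^{2}$ coming from $G\times G$ with the product of the two logic topologies: the product topology is coarser and Hausdorff while the first is quasi-compact (the quasi-compactness argument applies to $G\times G$), so the identity map between them is a homeomorphism, and since $\left\{ \left(x,y\right):xy\in\pi^{-1}\left(C\right)\right\}$ is type-definable over $\cM$, multiplication is continuous. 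This gives that $G\big/G^{0}$ and $G\big/G^{00}$ are compact Hausdorff topological groups. The step I expect to be the main obstacle is the conjunction of Hausdorffness and independence from $\cM$: these are where one must use in an essential way that $H$ (hence $E$) is type-definable over the small set $A$, that $\fC$ is saturated, and --- the genuinely delicate point --- be careful about which small models $\cM$ are admissible; granting this, the remaining steps are bookkeeping, and the whole statement is part of the standard treatment of the logic topology.
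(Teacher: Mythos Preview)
The paper does not prove this statement: it is recorded as a \emph{Fact} with a citation to \cite[8.1.5]{Simon_2015}, so there is no in-paper proof to compare against. Your argument follows the standard treatment found there and is essentially correct.

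One point deserves tightening. You announce at the outset that you take all the small models $\cM\supseteq A$ to be $\kappa$-saturated, and later justify ``$bH$ is type-definable over $\cM_{1}$'' by the parenthetical ``it meets $\cM_{1}$''. With that convention your argument only establishes independence of the topology among sufficiently saturated models, whereas the Fact (and the paper's setup) concerns an arbitrary small model $\cM\supseteq A$. The clean fix is to drop the saturation hypothesis and instead invoke the standard consequence of boundedness: for a bounded $A$-type-definable equivalence relation $E$, equality of types over any model $\cM\supseteq A$ refines $E$, so every $E$-class (here: every coset of $H$) is a union of complete types over $\cM$ and hence $\cM$-type-definable --- regardless of whether it has a representative in $\cM$. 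With this in hand, both the Hausdorffness step (no need for $a,b\in\cM$) and the independence step go through verbatim for arbitrary $\cM\supseteq A$. You flag exactly this as ``the genuinely delicate point'', so you are aware of it; the missing ingredient is just the one-line fact above.
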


\begin{rem}
\label{G0_in_any_expansion_of_Z}If $G$ is (a monser mode of)
any expansion of $\left(\Z,+\right)$, then $G^{0}$ exists and is
equal to $\bigcap_{m=1}^{\infty}mG$. This is because for each $m$,
$\Z$ has exactly one subgroup of index $m$, namely $m\Z$, and this
fact can be expressed by first-order formulas in $\left(\Z,+\right)$.
It follows that $G\big/G^{0}\cong\hat{\Z}=\underleftarrow{\lim}\Z\big/m\Z$.
\end{rem}

\subsection{Externally definable sets and the Shelah expansion}
\begin{defn}
Let $\cM$ be a structure in a language $L$, and fix an elementary
extension $\cN$ of $\cM$ which is $\left\vert \cM\right\vert ^{+}$-saturated. 
\begin{enumerate}
\item An \emph{externally definable} subset of $\cM$ is a subset of $\cM^{k}$
of the form 
\[
\phi\left(\cM;b\right):=\left\{ a\in\cM\,:\,\cN\models\phi\left(a;b\right)\right\} 
\]
for some $k\ge1$, $\phi\left(x;y\right)\in L$, and $b\in\cN^{\left\vert y\right\vert }$. 
\item The \emph{Shelah expansion} of $\cM$, denoted by $\cM^{Sh}$, is
defined to be the structure in the language $L^{Sh}:=\left\{ R_{\phi\left(x;b\right)}\left(x\right)\,:\,\phi\left(x;y\right)\in L,\,b\in\cN^{\left\vert y\right\vert }\right\} $
whose universe is $\cM$ and where each $R_{\phi\left(x;b\right)}\left(x\right)$
is interpreted as $\phi\left(\cM;b\right)$.
\end{enumerate}
Note that the property of being an externally definable subset of
$\cM$ does not depend on the choice of $\cN$. Hence, up to interdefinability,
$\cM^{Sh}$ does not depend on the choice of $\cN$ (although formally,
a different $\cN$ gives a different language and so a different structure),
so it makes sense to talk about \emph{the} Shelah expansion.
\end{defn}

\begin{fact}[{Shelah. See \cite[Proposition 3.23]{Simon_2015}}]
\label{nip_gives_shelah_expansion_QE}Let $\cM$ be NIP. Then
$\cM^{Sh}$ eliminates quantifiers.
\end{fact}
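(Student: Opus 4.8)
The plan is to prove the equivalent statement that the externally definable subsets of the powers $\cM^{k}$ are closed under coordinate projections; granting this, quantifier elimination for $\cM^{Sh}$ follows by a routine induction on $L^{Sh}$-formulas. Indeed, the quantifier-free $L^{Sh}$-formulas with parameters in $\cM$ define exactly the externally definable subsets of the $\cM^{k}$: atomic $L^{Sh}$-formulas define slices and cylindrifications of sets of the form $\phi(\cM;b)$, the externally definable sets form a Boolean algebra, and conversely each $\phi(\cM^{k};c)$ is the solution set of the atom $R_{\phi(\bar x;c)}(\bar x)$. The only genuinely new operation an $L^{Sh}$-formula performs is existential quantification over $\cM$, so closure under projections is precisely what is missing; and one checks that the equivalences obtained are uniform, hence valid in every model of $Th(\cM^{Sh})$, giving quantifier elimination for the theory rather than just the single structure.

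The key tool, and the only place NIP is used, is the existence of \emph{honest definitions} (Chernikov--Simon): for $\cM$ NIP, a formula $\phi(x;y)\in L$ and a parameter $b$ in a sufficiently saturated $\cN\succeq\cM$, there are $\psi(x;w)\in L$ and a tuple $d$ in a further sufficiently saturated extension $\cN'$ such that $\psi(\cM;d)=\phi(\cM;b)$ and $\models\psi(x;d)\to\phi(x;b)$, and moreover $d$ can be extracted from a Morley/indiscernible sequence so that $\operatorname{tp}(d/\cM b)$ is finitely satisfiable in $\cM$. This in turn rests on the VC-theoretic heart of NIP --- e.g. the $(p,q)$-theorem applied to the NIP family $\{\phi(\cM;b')\,:\,b'\}$. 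The plan is to invoke this directly (or, following Shelah's original route, to replace it by the corresponding VC/counting argument).

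Granting honest definitions, closure under projections goes as follows. Let $D=\phi(\cM^{k+1};b)$ be externally definable, with projection $\pi(D)=\{\bar a\in\cM^{k}\,:\,(\exists a'\in\cM)\,\models\phi(\bar a,a';b)\}$ onto the first $k$ coordinates. Apply honest definitions to $\phi$, viewed as a formula with object variable $(\bar x,y)$, to obtain $\psi(\bar x,y;d)$ with $\psi(\cM^{k+1};d)=\phi(\cM^{k+1};b)$, with $\psi(\bar x,y;d)\to\phi(\bar x,y;b)$ holding in $\cN'$, and with $\operatorname{tp}(d/\cM b)$ finitely satisfiable in $\cM$. Then $\pi(D)=\{\bar a\in\cM^{k}\,:\,\cN'\models(\exists y)\,\psi(\bar a,y;d)\}$, which is externally definable via $(\exists y)\,\psi(\bar x,y;w)$ and parameter $d$. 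The inclusion $\subseteq$ is immediate because $\psi$ and $\phi$ agree on $\cM^{k+1}$. For $\supseteq$: if $\cN'\models\psi(\bar a,a'';d)$ for some $a''$ and $\bar a\in\cM^{k}$, then the formula $(\exists y)\,\psi(\bar a,y;w)\wedge(\forall\bar x\,\forall y)(\psi(\bar x,y;w)\to\phi(\bar x,y;b))$ lies in $\operatorname{tp}(d/\cM b)$, so by finite satisfiability it is witnessed by some $d_{0}\in\cM$; thus $(\exists y)\,\psi(\bar a,y;d_{0})$ holds, and since $\bar a,d_{0}\in\cM\preceq\cN'$ a witness $a'$ can be found in $\cM$, whereupon the implication forces $\models\phi(\bar a,a';b)$, i.e. $\bar a\in\pi(D)$.

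The main obstacle is the honest definitions theorem itself: this is the substantive NIP input, and the point where the VC-dimension / $(p,q)$-machinery enters. A secondary subtlety, which the construction of honest definitions must absorb, is that $\operatorname{tp}(b/\cM)$ need not be finitely satisfiable in $\cM$, so one cannot simply replace $b$ by a coheir realization --- it is the honest definition $d$, not the original parameter $b$, that is arranged to be generic over $\cM$. Once quantifier elimination is established, one reads off that $\cM^{Sh}$ is again NIP and that its externally definable sets coincide with those of $\cM$, which is the form in which \cref{nip_gives_shelah_expansion_QE} is applied later.
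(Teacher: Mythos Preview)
The paper does not give a proof of this statement; it is quoted as a black box from \cite[Proposition~3.23]{Simon_2015}. Your sketch is essentially the argument given in that reference: reduce quantifier elimination to closure of the externally definable sets under coordinate projection, and obtain that closure via an honest definition $\psi(\bar x,y;d)$ of $\phi(\bar x,y;b)$ over $\cM$. The $\supseteq$ step you run through finite satisfiability of $\operatorname{tp}(d/\cM b)$ in $\cM$ is correct; this hypothesis is available because in the pair $(\cN',\cM')\succ(\cN,\cM)$ the honest-definition parameter $d$ is taken in $\cM'=P(\cN')$, so any formula over $\cN$ satisfied by $d$ is realised in $P(\cN)=\cM$ by elementarity of the pair. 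So the proposal is correct and matches the proof in the cited source.

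One small comment: the aside that ``the equivalences obtained are uniform, hence valid in every model of $Th(\cM^{Sh})$'' is not needed. Once every $\cM^{Sh}$-definable set is shown to be quantifier-free definable, each parameter-free $L^{Sh}$-formula $\Phi(\bar x)$ is equivalent in $\cM^{Sh}$ to some quantifier-free $\Psi(\bar x)$, and the sentence $\forall\bar x\,(\Phi\leftrightarrow\Psi)$ then lies in $Th(\cM^{Sh})$; so QE for the theory already follows without invoking the uniform version of honest definitions.
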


It follows that if $\cM$ is NIP then every definable set in $\cM^{Sh}$
is externally definable in $\cM$, and $\left(\cM^{Sh}\right)^{Sh}$
is interdefinable with $\cM^{Sh}$.
\begin{cor}
\label{shelah_expansion_preserves_dp_rank}Let $\cM$ be any structure.
Then $\kappa_{ict}\left(\cM^{Sh}\right)=\kappa_{ict}\left(\cM\right)$.
\end{cor}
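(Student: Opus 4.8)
The plan is to prove the two inequalities $\kappa_{ict}(\cM^{Sh}) \le \kappa_{ict}(\cM)$ and $\kappa_{ict}(\cM) \le \kappa_{ict}(\cM^{Sh})$ separately, using the fact that both $\cM$ and $\cM^{Sh}$ — whatever their theories — can be analyzed via ict-patterns, and that the definable sets of $\cM^{Sh}$ are precisely the externally definable sets of $\cM$. Note that we do not get to assume $\cM$ is NIP, since the statement is for an arbitrary structure; but if $\cM$ has an ict-pattern of every finite depth, then (by \cref{NIP_equivalent_to_non_infinite_dp_rank}) $\cM$ is not NIP and $\kappa_{ict}(\cM) = \infty$, in which case I will argue separately that the same pattern can be read off in $\cM^{Sh}$, so both sides are $\infty$; thus the interesting content is the finite case and there one direction essentially reduces to the NIP setting.

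For the inequality $\kappa_{ict}(\cM) \le \kappa_{ict}(\cM^{Sh})$: any ict-pattern witnessing depth $\kappa$ in $\cM$ lives, by definition, inside a sufficiently saturated elementary extension $\cM'$ of $\cM$. The point is that ict-patterns are a purely first-order phenomenon of the theory, so $\kappa_{ict}(\cM) = \kappa_{ict}(Th(\cM))$ can be witnessed already by formulas and parameter-arrays, and since $L \subseteq L^{Sh}$ (each $L$-formula $\phi(x;y)$, together with a choice of where to place the external parameters, gives rise to definable-in-$\cM^{Sh}$ relations with the external parameters absorbed), one shows that every $L$-formula's behavior is recoverable from $L^{Sh}$-formulas. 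More carefully: given an ict-pattern $(\phi_\alpha(x;y_\alpha), b_{\alpha,i})$ in a monster model of $Th(\cM)$, one can arrange the base model to contain all the relevant $b_{\alpha,i}$ and then pass to a small model $\cM_0 \preceq$ monster; the pattern restricts to $\cM_0^{Sh}$ because $L^{Sh}$ still contains $R_{\phi_\alpha(x;b)}$ for the needed external $b$'s — here one uses that $\cM^{Sh}$ is at least as expressive as $\cM$ on the relevant scale. I expect this direction to be the easier one, modulo bookkeeping about saturation.

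For the reverse inequality $\kappa_{ict}(\cM^{Sh}) \le \kappa_{ict}(\cM)$: suppose $\cM^{Sh}$ has an ict-pattern of depth $\kappa$. By \cref{nip_gives_shelah_expansion_QE} — wait, that requires NIP; so here I would instead argue directly that each $L^{Sh}$-formula, being a Boolean combination of atomic $R_{\phi(x;b)}$'s, is equivalent to an externally definable set of $\cM$, hence of the form $\psi(x; c)$ for some $L$-formula $\psi$ and parameter $c$ from an $|\cM|^+$-saturated elementary extension $\cN \succeq \cM$. Translating the ict-pattern of $\cM^{Sh}$ along this dictionary, the formulas become $L$-formulas $\psi_\alpha(x; z_\alpha)$ and the array entries $b_{\alpha,i}$ become tuples from $\cN$; one checks that the defining biconditional $\models \psi_\alpha(a_\eta; c_{\alpha,i}) \iff \eta(\alpha) = i$ is preserved because membership in an externally definable set, as computed in $\cM$, is the same as membership computed in $\cN$. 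This produces an ict-pattern of depth $\kappa$ in $Th(\cM)$ (after moving the array into a monster model of $Th(\cM)$, which is legitimate since an ict-pattern persists under elementary extension by compactness), giving $\kappa \le \kappa_{ict}(\cM)$. The main obstacle — and the point requiring genuine care — is this reverse direction when $\cM$ is \emph{not} assumed NIP: the clean quantifier-elimination statement \cref{nip_gives_shelah_expansion_QE} is unavailable, so I must work directly with the structure of $L^{Sh}$-formulas as Boolean combinations of externally definable sets of $\cM$ (which is true by construction of $L^{Sh}$, no NIP needed) and verify that the witnessing element $a_\eta$ for a path in $\cM^{Sh}$ can be found inside $\cM$, then transported. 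Finally, combining the two inequalities (with the $\infty$ case handled by the observation that $\kappa_{ict}(\cM) = \infty$ forces $\kappa_{ict}(\cM^{Sh}) = \infty$ and conversely, via \cref{NIP_equivalent_to_non_infinite_dp_rank} applied together with the two directions restricted to finite depths) yields the claimed equality.
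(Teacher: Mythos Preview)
Your overall strategy --- split on whether $\cM$ is NIP, and in the NIP case invoke \cref{nip_gives_shelah_expansion_QE} --- is the intended one; the paper states the corollary without proof precisely because it is meant to follow routinely from \cref{NIP_equivalent_to_non_infinite_dp_rank} and \cref{nip_gives_shelah_expansion_QE}. But your write-up contains a real error that you should excise.

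You assert that ``each $L^{Sh}$-formula, being a Boolean combination of atomic $R_{\phi(x;b)}$'s, is equivalent to an externally definable set of $\cM$ \dots\ (which is true by construction of $L^{Sh}$, no NIP needed).'' This is false. By construction only the \emph{quantifier-free} $L^{Sh}$-formulas are Boolean combinations of the predicates $R_{\phi(x;b)}$; an arbitrary $L^{Sh}$-formula may contain quantifiers, and eliminating those quantifiers is exactly the content of \cref{nip_gives_shelah_expansion_QE}, which genuinely requires NIP. So there is no direct argument for $\kappa_{ict}(\cM^{Sh})\le\kappa_{ict}(\cM)$ in the non-NIP case along the lines you sketch. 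Fortunately none is needed: if $\cM$ is not NIP then $\kappa_{ict}(\cM)=\infty$ by \cref{NIP_equivalent_to_non_infinite_dp_rank} and the inequality is vacuous. You already say this in your opening and closing paragraphs; simply drop the attempt to treat the non-NIP reverse inequality ``directly'' and let the case split carry the load.

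One technical point you gloss over in the NIP case deserves a sentence: an ict-pattern for $Th(\cM^{Sh})$ lives a priori in an elementary extension of $\cM^{Sh}$, whereas the dictionary $R_{\phi(x;b)}\leftrightarrow\phi(\,\cdot\,;b)$ is only literally available over $\cM^{Sh}$ itself. The clean fix is to note that, by compactness, an ict-pattern of finite depth $n$ exists iff arbitrarily wide finite $n\times m$ ict-arrays exist, and such finite arrays can be realised inside $\cM^{Sh}$; translate each one into $\cN$ via quantifier elimination, then reassemble a full pattern in $Th(\cM)$ by compactness.
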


\section{Genericity}

In this section we prove \cref{main_theorem_genericity}.
\begin{defn}
A subset $A$ of an abelian semigroup $\left(S,+\right)$ is called
\emph{generic in $S$} (or \emph{syndetic in $S$}, as this was called
in \cite[Subsection 4.1]{Alo2020}) if there is a finite subset $F\subseteq S$
such that $\bigcup_{s\in F}\left(A-s\right)=S$, where $A-s:=\left\{ a\in S:a+s\in A\right\} $.
\end{defn}

For $S=\N$, this is the same as saying that $A$ is infinite and
there is a uniform bound on the distance between every two consecutive
elements of $A$. For $S=\Z$, this is the same as saying that $\inf A=-\infty$,
$\sup A=\infty$, and there is a uniform bound on the distance between
every two consecutive elements of $A$.

Recall the following definitions from \cite[Subsection 4.2]{Alo2020}:
\begin{defn}
Let $A\subseteq\Z$. We say that $N\in\N$ is a \emph{bound on the
two-sided gaps of $A$} if for every $x\in A$ there exists $d\in\left[-N,-1\right]\cup\left[1,N\right]$
such that $x+d\in A$. We say that \emph{$A$ has bounded two-sided
gaps} if there exists a bound $N$ on the two-sided gaps of $A$.
\end{defn}

\begin{defn}
Let $A\subseteq\Z$.
\begin{enumerate}
\item Define a function $L_{A}:\Z\rightarrow\N\cup\{\infty\}$ by
\[
L_{A}(y):=\sup\left\{ m\in\N\,:\,\left[y-m,y-1\right]\cap A=\emptyset\right\} 
\]
\item Define a function $L_{A}:P(\Z)\rightarrow\N\cup\{\infty,-\infty\}$
by
\[
L_{A}(B):=\sup\left\{ L_{A}(y)\,:\,y\in B\text{ and }y>\inf A\right\} 
\]
\end{enumerate}
\end{defn}

\begin{rem}
\label{not_syndetic_implies_L_eq_infty}In \cite[Remark 4.5. (4)]{Alo2020}
it is remarked that if $A\subseteq\N$ is infinite and not generic
in $\N$ then $L_{A}\left(A\right)=\infty$. In fact, something a
bit more general is true: for $A\subseteq\Z$, if at lest one of the
sets $A\cap\N$, $\left(-A\right)\cap\N$ is infinite and not generic
in $\N$, then $L_{A}\left(A\right)=\infty$. 
\end{rem}

\begin{fact}[{\cite[Proposition 4.7., see remark below]{Alo2020}}]
\label{bounded_two_sided_gaps_gives_IP}Let $A\subseteq\Z$
be infinite with $L_{A}\left(A\right)=\infty$, and let $\mathcal{Z}:=\left(\mathbb{Z},+,0,1,A\right)$.
Suppose that every infinite subset of $A$ that is definable in $\mathcal{Z}$
has bounded two-sided gaps. Then the formula $y-x\in A$ has IP. 
\end{fact}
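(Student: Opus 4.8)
The plan is to reduce to the case already handled in \cite[Proposition 4.7]{Alo2020}, which is stated for $A\subseteq\N$ infinite, not generic in $\N$, with every infinite definable subset of $A$ having bounded gaps, and which concludes that $y-x\in A$ has IP. The only new content here is that we work with $A\subseteq\Z$ and use the weaker hypothesis $L_A(A)=\infty$ together with ``bounded two-sided gaps'' in place of ``bounded gaps.'' So first I would unwind what $L_A(A)=\infty$ buys us: by \cref{not_syndetic_implies_L_eq_infty} (and the symmetric reasoning behind it), $L_A(A)=\infty$ means that for every $N$ there is some $y>\inf A$ lying in $A$ with a run of $N$ consecutive integers just below $y$ missing from $A$; in other words, $A$ has arbitrarily long ``left gaps'' witnessed at points of $A$. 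This is exactly the combinatorial input the original proof extracts from non-genericity, so the argument should go through verbatim once we have it.

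Next I would set up the IP-witnessing configuration. The idea is to use the arbitrarily long left-gaps to build, for each finite set $s\subseteq\omega$, an element realizing the pattern $y-b_i\in A \iff i\in s$ for a suitable sequence $(b_i)$. Concretely, pick a point $y_0\in A$ with a long empty interval $[y_0-m,y_0-1]$ below it; inside a compatness/saturation argument in the monster model of $\cZ$, the bounded two-sided gaps hypothesis forces that any infinite definable piece of $A$ near such a point cannot ``thin out'' on both sides, and this tension is what the original proof leverages to shatter a set. I would reproduce the combinatorial core of \cite[Proposition 4.7]{Alo2020}: choose parameters $b_i$ spaced so that the constraint ``$y-b_i\in A$'' can be satisfied or violated independently by choosing $y$ to land in or just past one of the long left-gaps, using bounded two-sided gaps to guarantee that a point of $A$ is always available when we \emph{do} want $y-b_i\in A$.

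The main obstacle — and the only place the $\Z$ versus $\N$ distinction really matters — is that on $\Z$ the relevant long gaps might occur only on one side (say among the positive part of $A$, or only among $(-A)\cap\N$), whereas the original $\N$-proof implicitly has a single preferred direction. So I would handle this by a case split according to which of $A\cap\N$, $(-A)\cap\N$ is the infinite non-generic one (as in \cref{not_syndetic_implies_L_eq_infty}), noting that $y-x\in A$ has IP iff $y-x\in -A$ has IP (replace $A$ by $-A$ and swap the roles of $x,y$, since $y-x\in -A \iff x-y\in A$), and in the positive case simply invoke \cite[Proposition 4.7]{Alo2020} applied to $A\cap\N$ — here one checks that every infinite subset of $A\cap\N$ definable in $(\Z,+,0,1,A)$ is an infinite subset of $A$ definable in $\cZ$, hence has bounded two-sided gaps, and an infinite subset of $\N$ with bounded two-sided gaps has bounded gaps in the $\N$-sense, so the hypothesis of the cited proposition is met. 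Thus the whole statement follows from the $\N$-case plus the symmetry $A\leftrightarrow -A$ and the observation that $L_A(A)=\infty$ localizes the non-genericity to one of the two halves.
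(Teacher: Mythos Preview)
Your reduction to the $\N$-case has a genuine gap: $A\cap\N$ need not be definable in $\cZ=(\Z,+,0,1,A)$. Proposition~4.7 of \cite{Alo2020} is stated with the distinguished predicate \emph{equal} to the set in question, so applying it to $A\cap\N$ means working in the structure $(\Z,+,0,1,A\cap\N)$, and you would then need that every infinite subset of $A\cap\N$ definable \emph{in that structure} has bounded two-sided gaps. You only know this for subsets definable in $(\Z,+,0,1,A)$, and there is no reason the two structures have the same definable subsets. Even if this could be arranged, the conclusion would be that $y-x\in A\cap\N$ has IP, which is a formula in the wrong language and does not immediately give that $y-x\in A$ has IP. (Also, your appeal to \cref{not_syndetic_implies_L_eq_infty} is in the wrong direction: that remark deduces $L_A(A)=\infty$ from non-genericity of one half, not conversely.)

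The paper does not reduce to the $\N$-case at all. The remark following the statement simply observes that in the original proof of \cite[Proposition~4.7]{Alo2020}, the hypothesis ``$A\subseteq\N$ infinite and not generic in $\N$'' is used \emph{only} to obtain $L_A(A)=\infty$, and that the auxiliary \cite[Lemma~4.6]{Alo2020} (though stated for $A'\subseteq A\subseteq\N$) goes through verbatim for $A'\subseteq A\subseteq\Z$. Hence, once $L_A(A)=\infty$ is assumed directly, the entire combinatorial IP argument from \cite{Alo2020} applies without modification and without any $\N$-versus-$\Z$ case split. Your second paragraph---building the IP witnesses from the arbitrarily long left-gaps together with bounded two-sided gaps---is exactly that argument; that is the correct route, and there is no obstacle to overcome.
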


\begin{rem}
In \cite[Proposition 4.7.]{Alo2020}, instead of assuming that $A\subseteq\Z$
is infinite with $L_{A}\left(A\right)=\infty$, the assumption is
that $A\subseteq\N$ is infinite and not generic in $\N$. But this
assumption is used only to deduce that $L_{A}\left(A\right)=\infty$.
The proof of \cite[Proposition 4.7.]{Alo2020} also uses \cite[Lemma 4.6.]{Alo2020},
which assumes that $A^{\prime}\subseteq A\subseteq\N$, but in fact
the proof of \cite[Lemma 4.6.]{Alo2020} works for any $A^{\prime}\subseteq A\subseteq\Z$.
\end{rem}

\begin{cor}
\label{bdd_two_sided_gaps_for_subsets_but_not_generic_implies_either_N_is_def_or_IP}Let
$A\subseteq\Z$ be infinite, and let $\mathcal{Z}:=\left(\mathbb{Z},+,0,1,A\right)$.
Suppose that every infinite subset of $A$ that is definable in $\mathcal{Z}$
has bounded two-sided gaps. Suppose also that $A$ is not generic
in $\Z$. Then either $\N$ is definable in $\mathcal{Z}$, or the
formula $y-x\in A$ has IP. 
\end{cor}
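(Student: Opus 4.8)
The plan is to reduce to \cref{bounded_two_sided_gaps_gives_IP} by case analysis on the behavior of $A$ at $\pm\infty$. Since $A$ is not generic in $\Z$, it cannot be the case that both $A\cap\N$ and $(-A)\cap\N$ are infinite and generic in $\N$ while simultaneously $A$ has bounded two-sided gaps as a whole — more precisely, I would first observe that genericity of $A$ in $\Z$ is equivalent to the conjunction of: $A$ has bounded two-sided gaps, $\sup A=\infty$, and $\inf A=-\infty$. So failure of genericity splits into two scenarios: (i) $A$ is bounded above or bounded below (equivalently, one of $A\cap\N$, $(-A)\cap\N$ is finite), or (ii) both are infinite but $A$ fails to have bounded two-sided gaps, which — given the hypothesis that every \emph{infinite definable} subset of $A$ has bounded two-sided gaps, and $A$ itself is such a subset — cannot actually happen. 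Wait: $A$ is definable in $\cZ$ and infinite, so by hypothesis $A$ has bounded two-sided gaps. Hence scenario (ii) is vacuous, and we are always in scenario (i): $A$ is bounded above or below, so at least one of $A\cap\N$, $(-A)\cap\N$ is finite, and the other must be infinite (as $A$ is infinite).

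Next I would handle scenario (i). Say $A$ is bounded below (the bounded-above case is symmetric, replacing $A$ by $-A$, which is interdefinable over $(\Z,+,0,1)$ and preserves the hypotheses). Then $A\cap\N$ is cofinite in $A$, hence infinite, and $A$ is (up to a finite shift and translation) essentially a subset of $\N$. Here there is a dichotomy: either $A\cap\N$ is generic in $\N$, or it is not. If $A\cap\N$ is generic in $\N$, then since $A$ is bounded below, $A$ is "$\N$-like" — I claim $\N$ is then definable in $\cZ$: using that $A$ has a minimum $a_0$ and bounded gaps to the right, the set of elements lying to the right of $a_0$ can be captured, roughly as $\{x : \exists y\in A\ (y\le x)\}$ after translating $a_0$ to $0$; more carefully, genericity of $A\cap\N$ plus boundedness below lets one write down $\{n\in\Z : n\ge 0\}$ by a first-order condition referring to $A$ and $+$ (this is essentially the content that a bounded-below set with bounded right-gaps defines the order, cf.\ the arguments in \cite{Alo2020} around generic subsets of $\N$). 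If instead $A\cap\N$ is \emph{not} generic in $\N$, then by \cref{not_syndetic_implies_L_eq_infty} we get $L_A(A)=\infty$, and now \cref{bounded_two_sided_gaps_gives_IP} applies directly — its hypothesis that every infinite definable subset of $A$ has bounded two-sided gaps is exactly what we assumed — yielding that $y-x\in A$ has IP.

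The main obstacle I anticipate is the case where $A\cap\N$ \emph{is} generic in $\N$ but $A$ is only bounded below, where I must actually produce a definition of $\N$ rather than invoke a black box. The subtlety is that genericity gives bounded gaps going to the right but says nothing a priori that packages into a clean formula for the order; one must leverage the minimum element of $A$ (which exists since $A$ is bounded below and nonempty) together with translation by $+$ to anchor the construction, and then express "being to the right of the minimum" — this is where a short explicit argument, rather than a calculation, is needed, and I would model it on \cite[Subsection 4.1]{Alo2020}. Once $\N$ is definable we are in the first alternative of the conclusion and done. Assembling: in every case either $\N$ is definable in $\cZ$ or $y-x\in A$ has IP, which is the claim. $\blacksquare$
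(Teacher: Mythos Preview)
Your proposal contains a genuine error: the equivalence you assert, that genericity of $A$ in $\Z$ is the conjunction of ``$A$ has bounded two-sided gaps'', ``$\sup A=\infty$'', and ``$\inf A=-\infty$'', is false. Bounded two-sided gaps only says that every element of $A$ has \emph{some} other element of $A$ within distance $N$; it does \emph{not} bound the distance between consecutive elements. For instance, $A=\bigcup_{n\in\Z}\{2^{|n|}\cdot\mathrm{sgn}(n),\ 2^{|n|}\cdot\mathrm{sgn}(n)+1\}$ is unbounded in both directions and has bounded two-sided gaps (with $N=1$), yet is certainly not generic in $\Z$. So your ``scenario (ii)'' is not vacuous, and you cannot conclude that $A$ must be bounded above or below.

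The paper does treat the case where $A$ is unbounded in both directions, and it is not trivial: one observes that if both $A\cap\N$ and $(-A)\cap\N$ were generic in $\N$ then $A$ would be generic in $\Z$, contradicting the hypothesis; hence at least one of them is infinite and not generic in $\N$, which by \cref{not_syndetic_implies_L_eq_infty} gives $L_A(A)=\infty$, and then \cref{bounded_two_sided_gaps_gives_IP} applies. Your bounded-below and bounded-above cases are essentially the same as the paper's (translate so that $A'=A-\min(A)\subseteq\N$, then split on whether $A'$ is generic in $\N$), but you must add this third case to complete the argument.
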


\begin{proof}
If $A$ is bounded from below, let $m:=\min\left(A\right)$, and let
$A^{\prime}:=A-m$. Then $A^{\prime}\subseteq\N$, $A^{\prime}$ is
infinite, and $\left(\mathbb{Z},+,0,1,A^{\prime}\right)$ is interdefinable
with $\mathcal{Z}$. Moreover, every infinite subset of $A^{\prime}$
that is definable in $\mathcal{Z}$ has bounded two-sided gaps. By
\cref{not_syndetic_implies_L_eq_infty} and \cref{bounded_two_sided_gaps_gives_IP},
either $A^{\prime}$ is generic in $\N$, or the formula $y-x\in A^{\prime}$
has IP. In the former case, $\N$ is definable in $\mathcal{Z}$.
In the latter case, also the formula $y-x\in A$ has IP.

If $A$ is bounded from above, applying the above to $-A$ gives that
either $\N$ is definable in $\mathcal{Z}$, or the formula $y-x\in A$
has IP. 

So suppose that $A$ is unbounded both from below and from above.
If both sets $A\cap\N$, $\left(-A\right)\cap\N$ are generic in $\N$,
then $A$ is in fact generic in $\Z$, a contradiction. 

So at lest one of the sets $A\cap\N$, $\left(-A\right)\cap\N$ is
infinite and not generic in $\N$. By \cref{not_syndetic_implies_L_eq_infty},
$L_{A}\left(A\right)=\infty$. By \cref{bounded_two_sided_gaps_gives_IP},
the formula $y-x\in A$ has IP. 
\end{proof}
\begin{prop}
\label{dp_minimal_implies_bounded_two_sided_gaps}Let $\mathcal{Z}$
be a dp-minimal expansion of $\left(\mathbb{Z},+,0,1\right)$. Then
every infinite subset of $\Z$ definable in $\mathcal{Z}$ has bounded
two-sided gaps.
\end{prop}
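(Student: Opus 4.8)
The plan is to argue by contradiction: suppose $A \subseteq \Z$ is infinite, definable in $\cZ$, but does not have bounded two-sided gaps. I would first try to leverage dp-minimality to produce an ict-pattern (or, via \cref{k_inp_eq_k_ict_under_nip}, an inp-pattern) of depth $3$, contradicting $\kappa_{ict}(\cZ) \le 2$. The natural first formula in such a pattern should come from the translates of $A$: since $A$ is a subset of $\Z$, the formula $x - y \in A$ (equivalently $\phi(x;y) := A(x-y)$) together with a well-chosen sequence of parameters $(b_i)_{i<\omega}$ in $\Z$ (or in the monster) gives one ``row'' that is consistent along paths and inconsistent along rows, using that $A$ has unbounded gaps so that translates $A - b_i$ can be made to pin down coordinates. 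The point of \emph{unbounded two-sided gaps} is precisely that for arbitrarily long intervals $I$, we can find a point $x$ with $(x + I) \cap A = \emptyset$ on one side — this flexibility is what lets us realize many patterns of membership.

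The second row of the pattern I would extract from the group structure $(\Z,+,0,1)$ itself, which already carries an inp-pattern of depth $1$ witnessed by the cosets of the subgroups $m\Z$: the formula $\psi(x; z) := (x \equiv z \pmod{p})$ for a fixed prime $p$, with parameters $0,1,\dots,p-1$, is $p$-inconsistent as a row but any single instance is consistent, and moreover it is ``independent'' of the divisibility-agnostic combinatorics of $A$. So the real content is to show these two rows can be run \emph{simultaneously and independently}, i.e., that along any path one can find a common realization. This is where I expect the main obstacle to lie: one must show that the gap structure of $A$ is not ``explained by'' congruence conditions — formally, that $A$ meets (or misses) long intervals inside \emph{every} congruence class, so that the freedom to choose $x$ modulo $p$ does not interfere with the freedom to choose $x$ inside a long $A$-free interval. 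If $A$ happened to be, say, a union of congruence classes, its gaps would be bounded, so the hypothesis of unbounded gaps should rule this out; but making the quantitative statement ``unbounded gaps $\Rightarrow$ unbounded gaps within each residue class, uniformly'' precise, and then stitching the two rows into a genuine depth-$2$ (really depth-$3$, counting a third trivial row) pattern, is the delicate combinatorial heart.

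Concretely, the steps I would carry out are: (i) fix a prime $p$; from unboundedness of the gaps of $A$, produce for each $n$ an interval $I_n$ of length $\ge n$ and a point inside a prescribed residue class $r \bmod p$ realizing a prescribed membership pattern against finitely many translates of $A$ — this requires a pigeonhole/refinement argument on which residue classes the gaps can avoid; (ii) assemble the array: first row $\phi(x;b_{0,i})$ with $b_{0,i}$ chosen so the translates $A - b_{0,i}$ are ``spread out'' using the long intervals from (i); second row $\psi(x;b_{1,i}) = (x \equiv i \bmod p)$ for $i < p$ (and repeated/padded to index by $\omega$), which is trivially $p$-inconsistent as a row; (iii) verify path-consistency by a compactness argument in a sufficiently saturated model, using (i) at each finite stage; (iv) conclude $\kappa_{inp}(\cZ) \ge 3$, hence by \cref{k_inp_eq_k_ict_under_nip} (as dp-minimal theories are NIP) $\kappa_{ict}(\cZ) \ge 3$, contradicting dp-minimality. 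A cleaner packaging may instead invoke a known criterion that an abelian-group-definable set failing to have bounded gaps yields a ``pattern'' directly; but absent that, the argument above is the route, and step (i) — the uniform refinement showing gaps persist within residue classes — is the part I expect to consume most of the work.
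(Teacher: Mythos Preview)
Your proposal has a genuine gap in the second row of the inp-pattern. Congruence conditions modulo a fixed prime $p$ give only $p$ pairwise inconsistent instances, not an infinite $k$-inconsistent row; and ``padding to index by $\omega$'' by repeating residues destroys $k$-inconsistency, since any element of a fixed residue class then satisfies infinitely many instances. Using varying moduli does not help either: by the Chinese Remainder Theorem any finite collection of congruence conditions with coprime moduli is consistent, so no such family is $k$-inconsistent. More conceptually, $(\Z,+,0,1)$ is stable of dp-rank $1$, so its pure group structure cannot supply a second independent row; any valid depth-$2$ pattern must extract \emph{both} rows from the combinatorics of $A$ itself. Your step (i) about gaps persisting in residue classes, even if carried out, does not address this.

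The paper's argument does exactly that: both rows come from translates of $A$. The key idea you are missing is a recursive construction of a strictly increasing sequence $(b_i)_{i<\omega}$ in $A$ with $A\cap[b_j-b_i,b_j+b_i]=\{b_j\}$ for all $i<j$, obtained directly from the failure of bounded two-sided gaps. Passing to an indiscernible sequence $(a_i)_{i<\omega\cdot 2}$ (in an expansion by the order, which is harmless since we only need the pattern to live in the reduct), one checks that for $i<j$ the element $a_j+a_i$ lies in $A^*+a_i$ and $A^*+a_j$ but in no other $A^*+a_k$ with $k<j$. Splitting the index set into two $\omega$-blocks then yields two uniformly definable families $\{(A^*+a_{2i+1})\setminus(A^*+a_{2i})\}_{i<\omega}$ and $\{(A^*+a_{\omega+2j+1})\setminus(A^*+a_{\omega+2j})\}_{j<\omega}$, each $l$-inconsistent by NIP, with $a_{\omega+2j+1}+a_{2i+1}$ witnessing every path. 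This is an inp-pattern of depth $2$, contradicting dp-minimality.
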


\begin{proof}
Let $L$ be the language of $\mathcal{Z}$, and let $L^{\prime}=L\cup\left\{ <\right\} $.
Expand $\mathcal{Z}$ to an $L^{\prime}$-structure $\mathcal{Z}^{\prime}$
by interpreting $<$ as the usual order.

Let $A\subseteq\Z$ be infinite and definable in $\mathcal{Z}$, and
suppose towards a contradiction that it does not have bounded two-sided
gaps. So for all $N<\omega$ there is $a\in A$ such that $A\cap\left[a-N,a+N\right]=\left\{ a\right\} $.
We may assume that we can always find such $a>0$: otherwise, replace
$A$ with $-A$. Note that as $N$ goes to infinity, the minimal positive
$a$ satisfying the above, also goes to infinity. In particular, given
$N$, we can always find such $a$ which is arbitrarily large.

By recursion on $i<\omega$, we construct a strictly increasing sequence
$\left(b_{i}\right)_{i<\omega}$ of positive elements of $A$ such
that for all $i<j<\omega$, $A\cap\left[b_{j}-b_{i},b_{j}+b_{i}\right]=\left\{ b_{j}\right\} $:
Let $b_{0}$ be any positive element of $A$, and, given $b_{i}$,
let $b_{i+1}>b_{i}$ be given by the above for $N=b_{i}$. 

Let $\cM^{\prime}$ be an $\omega_{1}$-saturated elementary extension
of $\mathcal{Z}^{\prime}$, and let $\cM$ be its reduct to $L$.
Denote by $A^{*}$ the interpretation in $\cM$ of the formula defining
$A$ in $\mathcal{Z}$. Let $\left(a_{i}\right)_{i<\omega\cdot2}$
be an $L^{\prime}$-indiscernible sequence locally-based on $\left(b_{i}\right)_{i<\omega}$.
So $\left(a_{i}\right)_{i<\omega\cdot2}$ is a strictly increasing
sequence of positive infinite elements of $A^{*}$, and for all $i<j<\omega\cdot2$,
$A^{*}\cap\left[a_{j}-a_{i},a_{j}+a_{i}\right]=\left\{ a_{j}\right\} $. 

Let $i<j<\omega\cdot2$. Clearly, $a_{j}+a_{i}\in A^{*}+a_{j}$ and
$a_{j}+a_{i}\in A^{*}+a_{i}$. Let $k<j$, $k\neq i$. Then $a_{j}+a_{i}\notin A^{*}+a_{k}$:
Suppose otherwise. So $a_{j}+a_{i}-a_{k}\in A^{*}$. But $a_{j}+a_{i}-a_{k}\in\left[a_{j}-a_{j-1},a_{j}+a_{j-1}\right]$,
therefore $a_{j}+a_{i}-a_{k}=a_{j}$, so $a_{i}=a_{k}$, a contradiction.

Consider the following two families:
\begin{gather*}
\left\{ \left(A^{*}+a_{2i+1}\right)\backslash\left(A^{*}+a_{2i}\right)\,:\,i<\omega\right\} \\
\left\{ \left(A^{*}+a_{\omega+2j+1}\right)\backslash\left(A^{*}+a_{\omega+2j}\right)\,:\,j<\omega\right\} 
\end{gather*}

Since $\mathcal{Z}$ is NIP, both families are $l$-inconsistent for
some $l<\omega$. By the above, for every $i,j<\omega$, $a_{\omega+2j+1}+a_{2i+1}\in\left(A^{*}+a_{2i+1}\right)\backslash\left(A^{*}+a_{2i}\right)$
and $a_{\omega+2j+1}+a_{2i+1}\in\left(A^{*}+a_{\omega+2j+1}\right)\backslash\left(A^{*}+a_{\omega+2j}\right)$.
So this is an inp-pattern of depth $2$, contradicting the dp-minimality
of $\mathcal{Z}$.
\end{proof}
We also need the following:
\begin{fact}[{\cite[Corollary 2.20]{DolichGoodrick2017}}]
\label{no_strongly_dependent_expansions_of_the_order}Suppose
that $\cZ$ is a strong expansion of $\left(\mathbb{Z},+,0,1,<\right)$.
Then $\mathcal{Z}$ is interdefinable with $\left(\mathbb{Z},+,0,1,<\right)$.
\end{fact}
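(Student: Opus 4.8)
The plan is to prove that for every $n\ge 1$, every subset of $\Z^{n}$ definable (with parameters) in $\cZ$ is already definable in $\left(\Z,+,0,1,<\right)$. Since Presburger arithmetic — in the language with $<$ and the congruence relations $\equiv_{m}$, all $\emptyset$-definable from $+,0,1,<$ — eliminates quantifiers, this is exactly the claimed interdefinability. I would first settle the unary case $n=1$ and then bootstrap to all arities.

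\emph{The unary case.} Let $A\subseteq\Z$ be definable in $\cZ$, and suppose towards a contradiction that $A$ is not definable in $\left(\Z,+,0,1,<\right)$. By quantifier elimination for Presburger, for no $m$ is $A$ a finite Boolean combination of intervals and congruence classes modulo $m$; from this failure one extracts a fast-growing sequence of ``scales'' $N_{0}<N_{1}<\cdots$ at each of which $A$ behaves irregularly in a way not forced by its behaviour at earlier scales — near $N_{k}$, either $A$ has a two-sided gap much larger than any gap imposed by scales $<N_{k}$, or its apparent local period near $N_{k}$ disagrees with the one near $N_{j}$ for $j<k$. Each scale then contributes one row to an inp-pattern, built from translates of $A$ together with the order — exactly in the spirit of \cref{dp_minimal_implies_bounded_two_sided_gaps}, where translates $A^{*}+a_{i}$ of a set with unbounded two-sided gaps produced an inp-pattern of depth $2$: for row $k$ one uses a window formula $\phi_{k}(x;y)$ roughly of the form ``$x\in\left(A+y_{1}\right)\setminus\left(A+y_{2}\right)$ and $x$ lies in a prescribed interval'', evaluated along a sequence of translates adapted to scale $N_{k}$, which is $l_{k}$-inconsistent by NIP (argued as in \cref{dp_minimal_implies_bounded_two_sided_gaps}), while the mutual independence of the scales makes every path consistent, witnessed by an explicit sum of translates just as $a_{\omega+2j+1}+a_{2i+1}$ served there. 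This yields an inp-pattern of depth $\omega$, contradicting strength of $\cZ$. Hence every unary set definable in $\cZ$ is Presburger.

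\emph{The bootstrap.} Granting that unary definable sets are Presburger, I would prove by induction on $n$ that every $X\subseteq\Z^{n}$ definable in $\cZ$ decomposes into finitely many Presburger cells. The key intermediate step is that every definable function $f:\Z\to\Z$ is piecewise affine: $\Z$ partitions into finitely many Presburger pieces on each of which $f$ agrees, off a Presburger set, with a function $x\mapsto\left\lfloor (ax+b)/c\right\rfloor$. This rests on the unary case — applied to the definable sets $\left\{ x:f(x)\le t\right\}$ and $f^{-1}(\{t\})$ for parameters $t$ — together with strength, used to bound the number of pieces and to exclude superlinear behaviour (were $f$ not eventually affine on some piece, a suitable composition would let $\cZ$ define multiplication, hence fail NIP). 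Feeding piecewise affineness into a cell-decomposition induction — applied to the ``cell walls'' describing $X_{\bar a}\subseteq\Z$ as functions of $\bar a$, with the inductive hypothesis controlling how these depend on $\bar a$ — shows $X$ is Presburger, and ranging over $n$ gives the theorem.

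Two places carry the real content. The unary step is morally the depth-$2$ argument of \cref{dp_minimal_implies_bounded_two_sided_gaps} run along infinitely many independent scales; extracting the correct fast-growing sequence of scales from an arbitrary non-Presburger set, and choosing the window formulas so the rows do not interfere while every path stays consistent, takes care but follows a recognizable pattern. The bootstrap is the harder part, and the one I expect to be the main obstacle: establishing piecewise affineness of definable functions and a genuine cell decomposition for $\cZ$ are structural statements with no counterpart among the lemmas already in hand, and everything after the unary case hinges on them.
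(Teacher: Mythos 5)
First, note that the paper does not prove this statement at all: it is quoted verbatim as a Fact from Dolich--Goodrick \cite[Corollary 2.20]{DolichGoodrick2017}, so there is no internal proof to compare against, and your sketch must stand on its own as a proof of that external result. As it stands it does not: both of the places you yourself identify as carrying ``the real content'' are asserted rather than proved, and in addition the argument leans on a hypothesis you do not have. The assumption is that $\cZ$ is \emph{strong}, and strong theories need not be NIP (the random graph is inp-minimal, hence strong, and has IP). So the two points where you invoke NIP --- the $l_{k}$-inconsistency of the rows of your pattern ``argued as in \cref{dp_minimal_implies_bounded_two_sided_gaps}'', and the contradiction ``defines multiplication, hence fails NIP'' in the superlinear case --- are not available; any contradiction has to be extracted from strength (i.e.\ from the nonexistence of an inp-pattern of infinite depth) directly, for instance by building the row-inconsistency into the formulas themselves, which you gesture at with the ``prescribed interval'' clause but do not carry out.

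The deeper gap is that the unary step, as described, is essentially the theorem restated: from an arbitrary non-Presburger $A\subseteq\Z$ you must produce infinitely many rows that are simultaneously pairwise inconsistent within each row and consistent along every path, and the passage from ``$A$ is not a Boolean combination of intervals and congruence classes'' to ``independent scales'' with this exact combinatorial behaviour is precisely where all the work lies; nothing in \cref{dp_minimal_implies_bounded_two_sided_gaps}, which only produces a depth-$2$ pattern from unbounded two-sided gaps, supplies the mutual independence of infinitely many scales or the path consistency. Likewise the bootstrap from unary sets to all arities is left entirely open: piecewise affineness of definable functions and a cell decomposition for an arbitrary strong expansion of $\left(\Z,+,0,1,<\right)$ are substantial structural theorems, and you acknowledge you have no proof. (A known shortcut here is the Muchnik/Michaux--Villemaire theorem, that expanding Presburger arithmetic by any non-Presburger relation, of any arity, already produces a non-Presburger \emph{unary} definable set; citing or proving something of that kind would reduce the theorem to the unary case, but your proposal neither invokes nor establishes it.) So the proposal is a plausible programme, but with the hypothesis misused (NIP for strong) and both pivotal steps missing, it is not a proof.
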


Since every strongly-dependent theory is strong, this fact implies
that every proper expansion of $\left(\mathbb{Z},+,0,1,<\right)$
has $\mbox{dp-rank}\ge\omega$.

\begin{proof}[Proof of \cref{main_theorem_genericity}]
Let $A\subseteq\Z$ be infinite and definable in $\mathcal{Z}$,
and suppose towards a contradiction that it is not generic in $\Z$.
By \cref{dp_minimal_implies_bounded_two_sided_gaps}, every infinite
subset of $A$ that is definable in $\mathcal{Z}$ has bounded two-sided
gaps. Since $\mathcal{Z}$ is NIP, by \cref{bdd_two_sided_gaps_for_subsets_but_not_generic_implies_either_N_is_def_or_IP}
we get that $\N$ is definable in $\mathcal{Z}$, so $\cZ$ is a dp-minimal
expansion of $\left(\mathbb{Z},+,0,1,<\right)$. By \cref{no_strongly_dependent_expansions_of_the_order},
$\mathcal{Z}$ is interdefinable with $\left(\mathbb{Z},+,0,1,<\right)$,
a contradiction.
\end{proof}
\begin{rem}
Note that \cref{main_theorem_genericity} is only about the structure
$\mathcal{Z}$ itself, not the monster. It implies that every infinite
subset of the monster which is definable over $\Z$ is generic in
the monster, but other infinite definable subsets of the monster might
not be generic.

In fact, it can be shown that, in general, if $G$ is an NIP group,
and every infinite definable subset of the monster is generic, then
$G$ must be stable.
\end{rem}

\section{Obtaining a homomorphism to \texorpdfstring{$\protect\R\big/\protect\Z$}{R/Z}}

In this section, $T$ is any theory, and $G$ is a group $\emptyset$-definable
in $T$ for which $G^{00}$ exists (and hence so does $G^{0}$).
\begin{lem}
\label{G00_neq_G0_equiv_nbrhd_with_no_closed_finite_idx_subgroups}$G^{00}\neq G^{0}$
if and only if there is an open neighborhood $U$ of the identity
in $G\big/G^{00}$ which does not contain any closed subgroups of
finite index.
\end{lem}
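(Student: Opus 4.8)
The plan is to pass to the compact Hausdorff topological group $K:=G\big/G^{00}$, reformulate both sides of the equivalence in terms of the family $\mathcal{F}$ of closed subgroups of $K$ of finite index, and then invoke compactness and the Hausdorff property. Writing $\pi:G\to K$ for the quotient map, the first task is to check that, via $\pi$, the family $\mathcal{F}$ corresponds exactly to the definable finite-index subgroups of $G$. If $H\le G$ is definable of finite index, then $H$ has bounded index, so $G^{00}\subseteq H$; hence $\pi(H)=H\big/G^{00}$ has finite index in $K$, and its $\pi$-preimage $H$ is definable, so type-definable over a small model, so $\pi(H)$ is closed and thus lies in $\mathcal{F}$. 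Conversely, if $C\in\mathcal{F}$, then $\pi^{-1}(C)$ is a finite-index subgroup of $G$ which is type-definable over a small model; since a type-definable subgroup of finite index is in fact definable (its complement is a finite union of type-definable cosets, so both it and its complement are type-definable, whence it is definable by compactness), $\pi^{-1}(C)$ is a definable finite-index subgroup of $G$ containing $G^{00}$, and $C=\pi\big(\pi^{-1}(C)\big)$. Because $G^0$ is the intersection of all definable finite-index subgroups of $G$ and every such subgroup contains $G^{00}$, applying $\pi$ yields $G^0\big/G^{00}=\bigcap\mathcal{F}$; in particular, $G^{00}=G^0$ if and only if $\bigcap\mathcal{F}=\{e\}$, where $e$ denotes the identity of $K$.

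Next I would prove the purely topological fact that, for the compact Hausdorff group $K$, one has $\bigcap\mathcal{F}=\{e\}$ if and only if every open neighborhood $U$ of $e$ contains some member of $\mathcal{F}$. One direction is immediate: if every neighborhood of $e$ contains a member of $\mathcal{F}$, then $\bigcap\mathcal{F}$ is contained in the intersection of all neighborhoods of $e$, which is $\{e\}$ by the Hausdorff property. For the other direction, suppose $\bigcap\mathcal{F}=\{e\}$ and fix an open $U\ni e$. The family $\mathcal{F}$ is closed under finite intersections (the intersection of two closed finite-index subgroups is again closed of finite index), so the sets $C\setminus U$ for $C\in\mathcal{F}$ form a downward-directed family of closed subsets of the compact space $K$ with $\bigcap_{C\in\mathcal{F}}(C\setminus U)=\big(\bigcap\mathcal{F}\big)\setminus U=\emptyset$; by compactness some single member is already empty, i.e.\ there are $C_1,\dots,C_n\in\mathcal{F}$ with $(C_1\cap\cdots\cap C_n)\setminus U=\emptyset$, and then $C_1\cap\cdots\cap C_n$ is a member of $\mathcal{F}$ contained in $U$.

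Combining the two equivalences gives the lemma: $G^{00}\neq G^0$ iff $\bigcap\mathcal{F}\neq\{e\}$ iff it is not the case that every open neighborhood of $e$ in $K$ contains a closed finite-index subgroup, i.e.\ iff some open neighborhood of $e$ in $K$ contains no closed subgroup of finite index. The step I expect to require the most care — and the one I would single out as the crux — is the identification $G^0\big/G^{00}=\bigcap\mathcal{F}$, and specifically the fact that closed finite-index subgroups of $K$ pull back to \emph{definable}, not merely type-definable, finite-index subgroups of $G$; everything after that is the routine ``compactness produces a small member, Hausdorffness forces the intersection to be trivial'' argument for a downward-directed family of closed subgroups of a compact group.
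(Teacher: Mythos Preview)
Your proof is correct and follows essentially the same approach as the paper: both hinge on the correspondence between closed finite-index subgroups of $G/G^{00}$ and definable finite-index subgroups of $G$, then use Hausdorff separation for one direction and compactness (you phrase it topologically in $K$; the paper phrases it via saturation of the monster) for the other. Your presentation first isolates the correspondence and reduces to a purely topological statement about $\bigcap\mathcal{F}$, whereas the paper argues each implication inline, but the content is the same.
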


\begin{proof}
Denote by $\pi:G\rightarrow G\big/G^{00}$ the quotient map. Suppose
that $G^{00}\neq G^{0}$, so $\pi\left(G^{0}\right)\neq\left\{ 1\right\} $.
Let $1\neq h\in\pi\left(G^{0}\right)$. Since $G\big/G^{00}$ is Hausdorff,
there are open neighborhoods $U$,$V$ of $1$,$h$ respectively,
which are disjoint. So $\pi\left(G^{0}\right)\nsubseteq U$.

Suppose towarda a contradiction that $H^{\prime}\subseteq U$ for
some closed subgroup $H^{\prime}$ of $G\big/G^{00}$ of finite index,
and let $H:=\pi^{-1}\left(H^{\prime}\right)$. So $H$ is of finite
index in $G$. Since $G\big/G^{00}$ is compact, $H^{\prime}$ is
also open, therefore $H$ is definable over some (any) small model
$\cM$. Therefore $G^{0}\subseteq H$, so $\pi\left(G^{0}\right)\subseteq H^{\prime}\subseteq U$,
a contradiction.

On the other hand, suppose that $G^{00}=G^{0}$, so $\pi\left(G^{0}\right)=\left\{ 1\right\} $.
Let $U$ be an open neighborhood of $1$ in $G\big/G^{00}$. So $G^{0}\subseteq\pi^{-1}\left(U\right)$,
and $\pi^{-1}\left(U\right)$ is $\bigvee$-definable over some (any)
small model $\cM$. By the definition of $G^{0}$ and the saturation
of the monster, there is an $\cM$-definable subgroup $H$ of $G$
of finite index such that $H\subseteq\pi^{-1}\left(U\right)$. So
$\pi\left(H\right)\subseteq U$. Since $G^{00}\subseteq H$, $\pi^{-1}\left(\pi\left(H\right)\right)=HG^{00}=H$,
and so $\pi\left(H\right)$ is clopen and of finite index.
\end{proof}
We need the following two well-known facts:
\begin{fact}[{\cite[Gleason-Yamabe theorem for abelian groups]{Tao2014}}]
\label{Gleason_Yamabe_abelian}Let $G$ be a locally compact
abelian Hausdorff group, and let $U$ be a neighborhood of the identity.
Then there is a compact normal subgroup $K$ of $G$ contained in
$U$ such that $G\big/K$ is isomorphic to a Lie group.
\end{fact}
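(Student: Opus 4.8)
The statement is a standard consequence of the structure theory of locally compact abelian (LCA) groups together with Pontryagin duality, and in the paper it is only invoked; here I merely sketch the route I would take. The plan has three steps. First, reduce to the case where $G$ is compactly generated by passing to a suitable open subgroup. Second, invoke the structure theorem to replace an open subgroup of $G$ by one of the form $\R^{n}\times C$ with $C$ compact. Third, produce the required small compact subgroup inside $C$ as the annihilator of a finitely generated subgroup of the discrete dual group $\hat C$.

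For the first step I would let $G_{0}$ be the subgroup generated by a compact symmetric neighborhood of the identity contained in $U$; this is an open (hence closed) compactly generated subgroup of $G$ carrying the subspace topology. The reduction is harmless because any compact subgroup $K\subseteq U$ of $G_{0}$ with $G_{0}\big/K$ a Lie group automatically works for $G$ as well: $K$ is compact in $G$, it is normal since $G$ is abelian, it is closed and so $G\big/K$ is Hausdorff, and if $\pi$ denotes the quotient map then $\pi(G_{0})=G_{0}\big/K$ is an open subgroup of $G\big/K$ (quotient maps of topological groups are open) which is a Lie group --- and a topological group with an open Lie subgroup is a Lie group. The same remark lets me freely replace any group by an open subgroup at the later stages too.

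For the second and third steps I would apply the principal structure theorem for compactly generated LCA groups to write $G\cong\R^{n}\times\Z^{m}\times C$ with $C$ compact; since $\Z^{m}$ is discrete, $\R^{n}\times\{0\}\times C$ is open, so I may assume $G=\R^{n}\times C$ with $C$ compact abelian. Shrinking $U$ to a basic open neighborhood $W\times V$ of the identity, it suffices to find a compact subgroup $K'\subseteq V$ of $C$ with $C\big/K'$ a Lie group and to take $K=\{0\}\times K'$. Here I would use that $\hat C$ is a discrete abelian group, hence the directed union of its finitely generated subgroups $F_{i}$; setting $K_{i}:=F_{i}^{\perp}$, Pontryagin duality gives that $C\to\hat{F_{i}}$ is surjective with kernel $K_{i}$, so $C\big/K_{i}\cong\hat{F_{i}}$, which is a compact abelian Lie group because $F_{i}$ is finitely generated abelian (so $\hat{F_{i}}\cong\T^{r}\times(\text{finite})$). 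Since characters separate points, $\bigcap_{i}K_{i}=\{0\}$; the $K_{i}$ form a downward-directed family of closed subsets of the compact group $C$, so by the finite intersection property some $K_{i}$ lies inside $V$, and $K':=K_{i}$ finishes the argument, as $\R^{n}\times(C\big/K')$ is then a Lie group.

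The genuine content is the principal structure theorem for compactly generated LCA groups (together with basic Pontryagin duality); everything else is bookkeeping about subspace and quotient topologies and the elementary fact that an open Lie subgroup forces the ambient topological group to be Lie. That structure theorem is also the step I would expect to be the main obstacle if one wanted a fully self-contained proof --- but since only the abelian case is needed for the paper, citing this piece of structure theory is the natural thing to do.
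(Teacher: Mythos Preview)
Your sketch is correct. The paper does not prove this statement at all; it is stated as a \emph{Fact} with a citation to Tao, so there is no proof in the paper to compare against. You already observed this yourself. Your three-step outline (reduce to a compactly generated open subgroup, apply the structure theorem $G\cong\R^{n}\times\Z^{m}\times C$, then use Pontryagin duality on the compact factor to find a small compact subgroup with Lie quotient) is a standard and valid route to the abelian Gleason--Yamabe theorem, and the bookkeeping claims you make (open subgroups pass to open subgroups under quotient maps, a topological group with an open Lie subgroup is Lie, the annihilators $K_{i}=F_{i}^{\perp}$ are directed downward with trivial intersection so compactness forces some $K_{i}\subseteq V$) are all correct.
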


\begin{fact}
\label{structure_of_compact_abelian_Lie_groups}Every compact
abelian Lie group is isomorphic to $\left(\R\big/\Z\right)^{d}\times F$
for some $d\in\omega$ and $F$ a finite abelian group.
\end{fact}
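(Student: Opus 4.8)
The plan is to deduce this from the classical description of \emph{connected} compact abelian Lie groups as tori, together with a splitting argument for the component group. Since $G$ is a Lie group, its identity component $G^{0}$ is an open (and closed) subgroup, and since $G$ is compact the quotient $F:=G\big/G^{0}$ is a finite abelian group. It therefore suffices to prove two things: (a) $G^{0}\cong\left(\R\big/\Z\right)^{d}$ for some $d\in\omega$, and (b) the extension $1\rightarrow G^{0}\rightarrow G\xrightarrow{\pi}F\rightarrow1$ splits in the category of topological groups.

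For (a), the group $G^{0}$ is a connected abelian Lie group, so its exponential map $\exp:\mathfrak{g}\rightarrow G^{0}$ is a continuous homomorphism (a homomorphism because $G^{0}$ is abelian) from the additive group of the Lie algebra $\mathfrak{g}\cong\R^{n}$; it is surjective because its image is a subgroup containing an open neighborhood of the identity and $G^{0}$ is connected. Its kernel $\Lambda$ is a discrete subgroup of $\R^{n}$, hence a lattice, so $G^{0}\cong\R^{n}\big/\Lambda\cong\left(\R\big/\Z\right)^{k}\times\R^{n-k}$, where $k$ is the rank of $\Lambda$; compactness of $G^{0}$ forces $k=n$, so $G^{0}\cong\left(\R\big/\Z\right)^{d}$ with $d=n$.

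For (b), the torus $\left(\R\big/\Z\right)^{d}$ is a divisible abelian group, hence an injective object in the category of abelian groups, so the short exact sequence above splits algebraically: there is a homomorphism $s:F\rightarrow G$ with $\pi\circ s=\mathrm{id}_{F}$. Then the map $\left(\R\big/\Z\right)^{d}\times F\rightarrow G$ given by $\left(x,f\right)\mapsto x\cdot s\left(f\right)$ is a group isomorphism, and it is also a homeomorphism: $F$ is discrete, and on each sheet $\left(\R\big/\Z\right)^{d}\times\left\{ f\right\} $ it restricts to $x\mapsto x\cdot s\left(f\right)$, a homeomorphism onto the open coset $s\left(f\right)G^{0}$. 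This gives $G\cong\left(\R\big/\Z\right)^{d}\times F$ as topological groups, as required.

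None of the steps is genuinely difficult; the one demanding the most care is the classical identification in (a) of a connected compact abelian Lie group with a torus via the exponential map --- in particular that $\exp$ is surjective and that its kernel is a full-rank lattice. An alternative to this step is to pass through Pontryagin duality: the dual of $G^{0}$ is a finitely generated abelian group which is torsion-free because $G^{0}$ is connected, hence free, so $G^{0}$ is a torus; but the exponential-map argument is the most self-contained.
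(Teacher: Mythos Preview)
Your argument is correct and is in fact one of the standard proofs of this classical result. Note, however, that in the paper this statement is recorded as a \emph{Fact} and is not given a proof: it is cited as background, so there is no argument in the paper to compare yours against. Your write-up therefore supplies a proof where the paper deliberately omits one.

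A couple of minor points that would tighten the exposition. In (a), you implicitly use that the kernel $\Lambda$ of $\exp$ is discrete; this follows because $\exp$ is a local diffeomorphism near $0$, so some neighborhood of $0$ meets $\Lambda$ only in $\{0\}$. In (b), once you have produced the abstract group isomorphism $\left(\R\big/\Z\right)^{d}\times F\rightarrow G$, the quickest way to see it is a homeomorphism is to note that it is a continuous bijection from a compact space to a Hausdorff space; this subsumes your sheet-by-sheet argument. Neither of these is a gap --- your proof stands as written.
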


\begin{lem}
\label{homomorphism_to_the_circle}Suppose that $G$ is abelian
and $G^{00}\neq G^{0}$. Then there is a surjective group homomorphism
$h:G\twoheadrightarrow\R\big/\Z$ such that for any small model $\cM$:
\begin{enumerate}
\item A set $C\subseteq\R\big/\Z$ is closed if and only if $h^{-1}\left(C\right)$
is $\cM$-type-definable.
\item $h\left(\cM\right)$ is dense.
\end{enumerate}
\end{lem}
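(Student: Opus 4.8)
The plan is to apply the Gleason--Yamabe theorem (\cref{Gleason_Yamabe_abelian}) to the compact abelian Hausdorff group $G\big/G^{00}$, using as the neighborhood $U$ of the identity the one furnished by \cref{G00_neq_G0_equiv_nbrhd_with_no_closed_finite_idx_subgroups}, i.e.\ an open neighborhood of $1$ containing no closed subgroup of finite index. This produces a compact (normal) subgroup $K$ of $G\big/G^{00}$ with $K\subseteq U$ and $\bigl(G\big/G^{00}\bigr)\big/K$ a compact abelian Lie group. By \cref{structure_of_compact_abelian_Lie_groups} this quotient is $\left(\R\big/\Z\right)^{d}\times F$ for some $d\in\omega$ and finite abelian $F$. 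The first thing to check is $d\geq 1$: if $d=0$ then $\bigl(G\big/G^{00}\bigr)\big/K$ is finite, so its trivial subgroup pulls back to a closed finite-index subgroup of $G\big/G^{00}$ contained in $K\subseteq U$, contradicting the choice of $U$. So we may compose the quotient map $G\big/G^{00}\twoheadrightarrow \left(\R\big/\Z\right)^{d}\times F$ with a coordinate projection $\left(\R\big/\Z\right)^{d}\times F\twoheadrightarrow \R\big/\Z$ to get a continuous surjection $G\big/G^{00}\twoheadrightarrow \R\big/\Z$, and then precompose with $\pi\colon G\twoheadrightarrow G\big/G^{00}$ to obtain $h\colon G\twoheadrightarrow \R\big/\Z$. (One should make sure the chosen coordinate projection really is nontrivial on the image, which it is since the map onto $\left(\R\big/\Z\right)^{d}\times F$ is onto and $d\geq1$.)

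Next I would verify property (1). Write $h = q\circ\pi$ where $q\colon G\big/G^{00}\to\R\big/\Z$ is the continuous surjective homomorphism just built. Fix a small model $\cM$. If $C\subseteq\R\big/\Z$ is closed, then $q^{-1}(C)$ is closed in $G\big/G^{00}$ by continuity of $q$, hence (by the definition of the logic topology) $\pi^{-1}\bigl(q^{-1}(C)\bigr) = h^{-1}(C)$ is $\cM$-type-definable. Conversely, suppose $h^{-1}(C)$ is $\cM$-type-definable for some $C\subseteq\R\big/\Z$; I want $C$ closed. Since $G^{00}\subseteq h^{-1}(C)$ would fail unless $0\in C$, one argues as follows: $h^{-1}(C)$ is a union of cosets of $G^{00}=\ker(h\restriction)\cdot G^{00}$ — more precisely $h^{-1}(C) = \pi^{-1}\bigl(q^{-1}(C)\bigr)$ is $G^{00}$-invariant, so by the definition of the logic topology $q^{-1}(C)$ is closed in $G\big/G^{00}$; since $q$ is a continuous surjection of compact Hausdorff groups it is a closed quotient map (it is closed, being continuous from compact to Hausdorff, and surjective), hence $C = q\bigl(q^{-1}(C)\bigr)$ is closed in $\R\big/\Z$. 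This also needs the elementary remark that $q^{-1}(C)$ is automatically a union of cosets of $\ker q$, which is harmless. The key structural input here is just that $\pi^{-1}$ of a closed set is exactly a type-definable set and that $q$ is a topological quotient map.

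Finally, property (2): $h(\cM)$ is dense in $\R\big/\Z$. Suppose not; then its closure is a proper closed subset, in fact we can say more — the closure of a subgroup is a subgroup, and the only proper closed subgroups of $\R\big/\Z$ are the finite cyclic groups $\tfrac{1}{n}\Z\big/\Z$. So if $h(\cM)$ were not dense, $h(\cM)\subseteq \tfrac1n\Z\big/\Z$ for some $n$, hence $h(\cM)\subseteq C$ for the finite (closed) set $C=\tfrac1n\Z\big/\Z$; but then $h^{-1}(C)\supseteq \cM$ and, $C$ being closed, $h^{-1}(C)$ is $\cM$-type-definable by (1). Wait — $\cM\subseteq h^{-1}(C)$ together with $h^{-1}(C)$ being $\cM$-type-definable (indeed it is a subgroup of $G$ of finite index $\le n$, as $h^{-1}(\tfrac1n\Z\big/\Z)$ is the preimage of a finite-index subgroup) forces $h^{-1}(C) = G$ by saturation, i.e.\ $h(G)\subseteq \tfrac1n\Z\big/\Z$, contradicting surjectivity of $h$. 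The main obstacle I anticipate is not any single step but making the soft-topology bookkeeping in (1) fully rigorous — in particular confirming that $q$ descends to a genuine topological quotient map and that $G^{00}$-invariance of $h^{-1}(C)$ is exactly what lets one pass between type-definability downstairs in $G\big/G^{00}$ and closedness of $C$; everything else (the $d\geq1$ argument, the density argument) is short once the homomorphism is in hand.
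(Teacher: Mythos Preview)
Your approach is the same as the paper's, and the construction of $h$ together with the proof of (1) match. For (2) you take a slightly longer route than the paper, and there is one genuine slip to flag.

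Your parenthetical ``indeed it is a subgroup of $G$ of finite index $\le n$, as $h^{-1}(\tfrac1n\Z\big/\Z)$ is the preimage of a finite-index subgroup'' is incorrect: $\tfrac1n\Z\big/\Z$ is a \emph{finite} subgroup of $\R\big/\Z$, not a finite-index one (the quotient $\bigl(\R\big/\Z\bigr)\big/\bigl(\tfrac1n\Z\big/\Z\bigr)$ is again a circle, hence uncountable). Fortunately you do not actually need this claim: the step ``$\cM\subseteq h^{-1}(C)$ and $h^{-1}(C)$ is $\cM$-type-definable, hence $h^{-1}(C)=G$'' is valid on its own, by elementarity rather than saturation --- each formula in a presentation of $h^{-1}(C)$ over $\cM$ holds on all of $\cM$, hence on all of $G$. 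So your proof of (2) survives once the erroneous parenthetical is deleted.

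For comparison, the paper's argument for (2) is more direct and avoids classifying closed subgroups of $\R\big/\Z$: for any nonempty open $O\subseteq\R\big/\Z$, part (1) gives that $h^{-1}(O)$ is nonempty and $\bigvee$-definable over $\cM$, so it meets $\cM$, i.e.\ $O\cap h(\cM)\neq\emptyset$.
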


\begin{proof}
By \cref{G00_neq_G0_equiv_nbrhd_with_no_closed_finite_idx_subgroups}
there is an open neighborhood $U$ of the identity in $\bar{G}:=G\big/G^{00}$
which does not contain any closed subgroups of finite index. By \cref{Gleason_Yamabe_abelian}
there is a compact (so also closed) normal subgroup $K$ of $\bar{G}$
contained in $U$ such that $\bar{G}\big/K$ is isomorphic to a Lie
group. By \cref{structure_of_compact_abelian_Lie_groups}, $\bar{G}\big/K$
is of the form $\left(\R\big/\Z\right)^{d}\times F$ for some $d\in\omega$
and $F$ a finite abelian group. By the choice of $U$, $K$ has infinite
index in $\bar{G}$, so $d\ge1$. 

Let $p:\bar{G}\big/K\cong\left(\R\big/\Z\right)^{d}\times F\to\R\big/\Z$
be the projection on the first coordinate, and denote also by $\pi:G\rightarrow\bar{G}$
and $\rho:\bar{G}\to\bar{G}\big/K$ the quotient maps. Let $h:=p\circ\rho\circ\pi$.
Since $\rho$ and $p$ are topological quotient maps, and since the
topology on $\bar{G}$ is the logic topology, $h$ satisfies $\left(1\right)$.

$\left(2\right)$ follows from $\left(1\right)$: Let $\emptyset\neq O\subseteq\R\big/\Z$
be open. So $h^{-1}\left(O\right)\neq\emptyset$ is $\bigvee$-definable
over $\cM$, therefore $h^{-1}\left(O\right)\cap\cM\neq\emptyset$,
so $O\cap h\left(\cM\right)\neq\emptyset$.
\end{proof}
\begin{rem}
\label{if_P_is_type_def_then_its_img_under_h_is_closed}In the
context of \cref{homomorphism_to_the_circle}, in particular, $\ker\left(h\right)$
is type-definable over any small model. Therefore, if $P\subseteq G$
is type-definable (over any small set), then $h^{-1}\left(h\left(P\right)\right)=P+\ker\left(h\right)$
is also type-definable, and hence $h\left(P\right)$ is closed in
$\R\big/\Z$.
\end{rem}

\section{Recovering the cyclic order}

In this section we prove \cref{main_theorem_cyclic_order}. Let
$\mathcal{Z}$ be a dp-minimal expansion of $\left(\mathbb{Z},+,0,1\right)$
with monster model $G$, and suppose that $G^{00}\neq G^{0}$. Let
$h:G\twoheadrightarrow\R\big/\Z$ be a homomorphism as given by \cref{homomorphism_to_the_circle}.
Let $\alpha\in\R$ be such that $h\left(1\right)=\alpha+\Z$. Since
$h\left(\Z\right)$ is dense, $\alpha\in\R\backslash\Q$. Note that
$h\restriction_{\Z}$ is injective. We will show that $C_{\alpha}$
is definable in $\cZ$.
\begin{notation}
Let $q:\R\to\R\big/\Z$ be the quotient map, and let $\iota:=\left(q\restriction_{\left[-\frac{1}{2},\frac{1}{2}\right)}\right)^{-1}:\R\big/\Z\to\left[-\frac{1}{2},\frac{1}{2}\right)$.
Note that $q\restriction_{\left(-\frac{1}{2},\frac{1}{2}\right)}$
and $\iota\restriction_{\left(\R/\Z\right)\backslash\left\{ q\left(-\frac{1}{2}\right)\right\} }$
are homeomorphisms.

When it's clear from the context, we identify intervals $I\subseteq\R$
with their image $q\left(I\right)$. So, for example, $h^{-1}\left(\left[-\frac{1}{4},\frac{1}{4}\right]\right)$
means $h^{-1}\left(q\left(\left[-\frac{1}{4},\frac{1}{4}\right]\right)\right)$.
\end{notation}

\begin{rem}
\label{restricted_additivity_of_iota}If $s,t\in\R\big/\Z$ and
$\i s+\i t\in\left[-\frac{1}{2},\frac{1}{2}\right)$ then $\i{s+t}=\i{q\left(\i s\right)+q\left(\i t\right)}=\i{q\left(\i s+\i t\right)}=\i s+\i t$.
In particular, this is true for $s,t\in q\left(\left(-\frac{1}{4},\frac{1}{4}\right)\right)$.

We also have $\i{-s}=-\i s$ for all $s\in\R\big/\Z$.
\end{rem}

\begin{notation}
\label{approximate_balls_notation}Let $u_{1}<v_{1}\le v_{2}<u_{2}$
in $\R$. Then $h^{-1}\left(\left[v_{1},v_{2}\right]\right)\subseteq h^{-1}\left(\left(u_{1},u_{2}\right)\right)$,
where $h^{-1}\left(\left[v_{1},v_{2}\right]\right)$ is type-definable
over $\Z$ and $h^{-1}\left(\left(u_{1},u_{2}\right)\right)$ is $\bigvee$-definable
over $\Z$. By saturation, there is a $\Z$-definable set $h^{-1}\left(\left[v_{1},v_{2}\right]\right)\subseteq B\subseteq h^{-1}\left(\left(u_{1},u_{2}\right)\right)$.
We fix one such set and denote it by $B_{u_{1},v_{1},v_{2},u_{2}}$. 

For $u\in\R$ and $0\le r_{1}<r_{2}$, we denote $B_{u,r_{1},r_{2}}:=B_{u-r_{2},u-r_{1},u+r_{1},u+r_{2}}$.
For $r>0$ we also denote $B_{u,r}:=B_{u,\frac{r}{2},r}$.

We may assume that the sets of the form $B_{0,r_{1},r_{2}}$ (and
$B_{0,r}$) are symmetric, by replacing them with $B_{0,r_{1},r_{2}}\cup\left(-B_{0,r_{1},r_{2}}\right)$.
\end{notation}

\begin{rem}
\label{genericity_in_the_monster_with_translations_in_Z}Since
$G^{00}\neq G^{0}$, $\mathcal{Z}$ is not interdefinable with $\left(\mathbb{Z},+,0,1,<\right)$,
so by \cref{main_theorem_genericity}, every infinite subset of
$\Z$ definable in $\mathcal{Z}$ is generic in $\Z$. By elementarity,
for every infinite subset $A$ of $G$ which is definable over $\Z$,
there is a finite subset $F\subseteq\Z$ such that $A+F=G$.
\end{rem}

\begin{lem}
\label{local_genericity}Let $0<r\in\R$, and let $E$ be an infinite
$\Z$-definable subset of $G$ such that $h\left(E\right)\subseteq\left(-\frac{r}{2},\frac{r}{2}\right)$.
Then for every $0<u<\frac{1}{2}$ there is a finite $F\subseteq\Z$
such that $h\left(F\right)\subseteq\left[-u-\frac{r}{2},u+\frac{r}{2}\right]$
and 
\[
h^{-1}\left(\left[-u,u\right]\right)\subseteq E+F\subseteq h^{-1}\left(\left(-u-r,u+r\right)\right)
\]
\end{lem}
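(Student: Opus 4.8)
The plan is to bootstrap from the global genericity statement in \cref{genericity_in_the_monster_with_translations_in_Z} (which gives a finite translating set in $\Z$ for the infinite $\Z$-definable set $E$) to a more precise, "local" version where we control where the translations land under $h$ and where the sum $E+F$ sits. First I would apply \cref{genericity_in_the_monster_with_translations_in_Z} to $E$ to get a finite $F_0 \subseteq \Z$ with $E + F_0 = G$; in particular $h^{-1}([-u,u]) \subseteq E + F_0$. The problem is that an arbitrary $n \in F_0$ might have $h(n)$ far from the interval $[-u-\frac r2, u+\frac r2]$, and then $E + n$ could contribute points of $h^{-1}([-u,u])$ only "by accident". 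The key observation is: if $x \in h^{-1}([-u,u])$ and $x \in E + n$, i.e. $x - n \in E$, then $h(x) - h(n) = h(x-n) \in h(E) \subseteq (-\frac r2, \frac r2)$, so $h(n) = h(x) - h(x-n) \in [-u,u] + (-\frac r2,\frac r2) \subseteq [-u-\frac r2, u+\frac r2]$ — but this is an equation in $\R\big/\Z$, so I must be careful that the interval genuinely lifts, which it does because $u + \frac r2 < 1$ can be arranged (for $r$ small; if $r$ is large the statement may need $h^{-1}((-u-r,u+r))$ interpreted as all of $G$, and one should check the degenerate cases separately, or simply note $u+\tfrac r2<\tfrac12+\tfrac r2$ and the relevant arithmetic is still unambiguous for the finitely many values $h(x-n)$ and $h(x)$ involved). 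So I would set $F := \{\, n \in F_0 : h(n) \in [-u-\frac r2, u+\frac r2] \,\}$ and argue that this smaller set still covers $h^{-1}([-u,u])$: given $x$ in that preimage, pick $n \in F_0$ with $x - n \in E$; by the computation $h(n) \in [-u-\frac r2,u+\frac r2]$, so $n \in F$, so $x \in E + F$. This gives the left inclusion $h^{-1}([-u,u]) \subseteq E + F$ and also $h(F) \subseteq [-u-\frac r2, u+\frac r2]$ (after discarding unused elements of $F$, or just keeping only those $n$ that are actually needed for some $x$).

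For the right inclusion, I would compute directly: if $y \in E + F$, write $y = e + n$ with $e \in E$, $n \in F$; then $h(y) = h(e) + h(n) \in (-\frac r2, \frac r2) + [-u-\frac r2, u+\frac r2] = (-u - r, u + r)$ as subsets of $\R\big/\Z$, provided this sum of intervals has length $< 1$, i.e. $2u + 2r < 1$; when that fails the right-hand side $h^{-1}((-u-r,u+r))$ should be read as all of $G$ and the inclusion is trivial. Hence $y \in h^{-1}((-u-r,u+r))$.

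The main obstacle I anticipate is purely bookkeeping around $\R\big/\Z$ arithmetic: additions of intervals in the circle only behave like additions of intervals in $\R$ when the total length stays below $1$, so the clean inclusions $h^{-1}([-u,u]) \subseteq E+F \subseteq h^{-1}((-u-r,u+r))$ with $h(F) \subseteq [-u-\frac r2, u+\frac r2]$ are stated with the tacit understanding (consistent with the paper's notational convention identifying intervals $I \subseteq \R$ with $q(I)$) that these intervals have length $< 1$; for the generic application $r$ will be taken small, and for large $r$ the statement degenerates harmlessly. Everything else is a one-line consequence of the homomorphism property of $h$ together with the already-established global genericity (\cref{genericity_in_the_monster_with_translations_in_Z}) and the definition of the sets $B_{\ldots}$ in \cref{approximate_balls_notation}. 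I do not expect to need dp-minimality again here beyond what is already packaged in \cref{main_theorem_genericity}.
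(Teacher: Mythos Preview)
Your proposal is correct and follows essentially the same route as the paper: apply \cref{genericity_in_the_monster_with_translations_in_Z} to obtain a finite $F_0\subseteq\Z$ with $E+F_0=G$, then restrict to $F:=\{n\in F_0 : h(n)\in[-u-\tfrac r2,u+\tfrac r2]\}$ and verify both inclusions via the homomorphism property of $h$. The paper's proof is just a terser version of your argument (it phrases the key observation as the contrapositive), and your remarks about circle arithmetic and the degenerate large-$r$ case are the right caveats, handled in the paper implicitly by its standing notational conventions; note that \cref{approximate_balls_notation} is not actually needed here.
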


\begin{proof}
Since $E$ is infinite and $\Z$-definable, by \cref{genericity_in_the_monster_with_translations_in_Z}
there is a finite $F^{\prime}\subseteq\Z$ such that $E+F^{\prime}=G$.
Note that for $c\in G$, if $h\left(c\right)\notin\left[-u-\frac{r}{2},u+\frac{r}{2}\right]$
then $h\left(E+c\right)\cap\left[-u,u\right]=\emptyset$. So $F:=F^{\prime}\cap h^{-1}\left(\left[-u-\frac{r}{2},u+\frac{r}{2}\right]\right)$
is as required.
\end{proof}
\begin{lem}
\label{isolated_pt_of_img_of_Z_def_set_comes_frm_sngle_elmnt_of_Z}Let
$A\subseteq G$ be $\Z$-definable, and let $s\in h\left(A\right)$.
If $s$ is an isolated point of $h\left(A\right)$, then there is
$b\in\Z$ such that $A\cap h^{-1}\left(s\right)=\left\{ b\right\} $.
\end{lem}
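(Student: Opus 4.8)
The plan is to exploit the fact that $s$ is isolated in $h(A)$ to separate $A\cap h^{-1}(s)$ from the rest of $A$ by a $\Z$-definable set, and then use $\Z$-genericity together with the injectivity of $h\restriction_\Z$ to pin down a single integer. First I would pick a small interval $I$ around $s$ in $\R\big/\Z$ with $I\cap h(A)=\{s\}$; choosing $I$ of the form (the image of) an open interval, I get that $h^{-1}(I)$ is $\bigvee$-definable over $\Z$, while $h^{-1}(s)$, being the preimage of a closed point, is type-definable over $\Z$, and $A$ is $\Z$-definable. Hence $A\cap h^{-1}(s)=A\cap h^{-1}(I)$, and by the same saturation argument used in \cref{approximate_balls_notation} there is a $\Z$-definable set $A_0$ with $A\cap h^{-1}(s)\subseteq A_0\subseteq A\cap h^{-1}(I)$; squeezing once more I can arrange $A_0=A\cap h^{-1}(I')$ for a slightly smaller $\Z$-definable ``interval'' set $B$, so in particular $A_0$ is itself a $\Z$-definable subset of $A$ whose image under $h$ is exactly $\{s\}$.

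Next I would argue that $A_0$ must be \emph{finite}. Indeed, if $A_0$ were infinite then, being $\Z$-definable, by \cref{genericity_in_the_monster_with_translations_in_Z} there would be a finite $F\subseteq\Z$ with $A_0+F=G$; but $h(A_0+F)=\{s\}+h(F)$ is a finite subset of $\R\big/\Z$, contradicting the surjectivity of $h$ (equivalently, $h(G)=\R\big/\Z$ is infinite). So $A_0$ is a finite $\Z$-definable set, i.e. a finite subset of $\Z$, with $h(A_0)=\{s\}$. Since $h\restriction_\Z$ is injective (noted just before the \cref{approximate_balls_notation}), $A_0$ can contain at most one integer; since $s\in h(A)$ we must check $A_0$ is nonempty — but $A\cap h^{-1}(s)\subseteq A_0$ and $h(A)\ni s$ only tells us $A\cap h^{-1}(s)\neq\emptyset$ in the monster, not that it meets $\Z$. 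This is the point that needs care.

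To resolve it, I would use that $s\in h(A)$ means there is some $g\in G$ with $g\in A$ and $h(g)=s$; then $g\in A_0$, so $A_0\neq\emptyset$, and being a finite subset of the monster that is definable over $\Z$, it is actually a finite subset of $\Z$ (a finite $\Z$-definable set in an expansion of $(\Z,+)$ consists of its realizations in $\Z$, by elementarity of $\Z$ in $G$). Thus $A_0$ is a nonempty finite subset of $\Z$ with $h(A_0)=\{s\}$, and injectivity of $h\restriction_\Z$ forces $|A_0|=1$, say $A_0=\{b\}$ with $b\in\Z$. Finally $A\cap h^{-1}(s)\subseteq A_0=\{b\}$ and $b\in A\cap h^{-1}(s)$, so $A\cap h^{-1}(s)=\{b\}$, as desired.

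The main obstacle is the finiteness step combined with the descent from the monster to $\Z$: one has to be careful that the squeezed set $A_0$ is genuinely $\Z$-definable (not just type- or $\bigvee$-definable) so that both \cref{genericity_in_the_monster_with_translations_in_Z} applies and "finite and $\Z$-definable" really means "a finite subset of $\Z$". Everything else — the choice of separating interval, the use of surjectivity of $h$ to contradict infiniteness, and the final appeal to injectivity of $h\restriction_\Z$ — is routine.
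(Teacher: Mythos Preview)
Your argument is correct, but it takes a different route from the paper's. The paper does not invoke genericity at all. Instead, it defines $b$ at the outset as the unique element of $\Z\cap h^{-1}(s)$ (or $b:=0$ if there is none), takes the $\Z$-definable set $(A\cap B)\setminus\{b\}$ for an approximate ball $B$ around $s$, and argues directly by elementarity: if $A\cap h^{-1}(s)\neq\{b\}$ then $(A\cap B)\setminus\{b\}$ has a point in $G$, hence a point $a\in\Z$, and then $h(a)\in h(A)\cap h(B)=\{s\}$ forces $a\in\Z\cap h^{-1}(s)\subseteq\{b\}$, a contradiction.

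Your proof instead shows $A_0=A\cap h^{-1}(s)$ is $\Z$-definable, then appeals to \cref{genericity_in_the_monster_with_translations_in_Z} to rule out $A_0$ being infinite, and finally descends to $\Z$. This works, but uses the full strength of Section~3 (hence dp-minimality and that $\cZ$ is not interdefinable with $(\Z,+,0,1,<)$), whereas the paper's argument is pure elementary-substructure reasoning and would go through in any context where $h\restriction_\Z$ is injective. Also, your ``squeezing once more'' is redundant: having noted $A\cap h^{-1}(s)=A\cap h^{-1}(I)$, any $\Z$-definable $A_0$ sandwiched between them is already equal to both, so you may simply take $A_0:=A\cap B$ for $B$ as in \cref{approximate_balls_notation}.
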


\begin{proof}
Since $h\restriction_{\Z}$ is injective, $\left\vert \Z\cap h^{-1}\left(s\right)\right\vert \le1$.
If $\left\vert \Z\cap h^{-1}\left(s\right)\right\vert =1$, denote
by $b$ the single element of $\Z\cap h^{-1}\left(s\right)$, and
otherwise, denote $b:=0$. Since $s$ is an isolated point of $h\left(A\right)$,
there is $0<r\in\R$ such that $h\left(A\right)\cap\left(\i s-r,\i s+r\right)=\left\{ s\right\} $.
Let $B:=B_{\i s,r}$ as in \cref{approximate_balls_notation}.

Suppose towards a contradiction that $A\cap h^{-1}\left(s\right)\neq\left\{ b\right\} $
(in particular, this holds if $\Z\cap h^{-1}\left(s\right)=\emptyset$).
Since $s\in h\left(A\right)$, $A\cap h^{-1}\left(s\right)\neq\emptyset$,
so let $d\in A\cap h^{-1}\left(s\right)$ such that $d\neq b$. In
particular, $d\in h^{-1}\left(\left[\i s-\frac{r}{2},\i s+\frac{r}{2}\right]\right)\subseteq B$.
So $d\in\left(A\cap B\right)\backslash\left\{ b\right\} $. But $\left(A\cap B\right)\backslash\left\{ b\right\} $
is $\Z$-definable, so, by elementarity, there exists $a\in\left(\left(A\cap B\right)\backslash\left\{ b\right\} \right)\cap\Z$.
Therefore $h\left(a\right)\in h\left(A\cap B\right)\subseteq h\left(A\right)\cap h\left(B\right)\subseteq h\left(A\right)\cap\left(\i s-r,\i s+r\right)=\left\{ s\right\} $,
so $a\in\Z\cap h^{-1}\left(s\right)\subseteq\left\{ b\right\} $,
a contradiction.
\end{proof}
\begin{lem}
\label{local_genericity_around_an_accumulation_point}Let $A\subseteq G$
be $\Z$-definable, let $0<d\in\R$, and let $p\in\left(-\frac{d}{2},\frac{d}{2}\right)$
be an accumulation point of $h\left(A\right)$. Then there is a finite
subset $F\subseteq\Z$ such that $h\left(F\right)\subseteq\left(-d,d\right)$,
$0\notin F$, and $h^{-1}\left(\left(-\frac{d}{2},\frac{d}{2}\right)\right)\subseteq A+F$.
\end{lem}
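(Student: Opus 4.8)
The plan is to shrink to a small arc around $p$, translate that arc back to a small arc around an integer so that \cref{local_genericity} applies, and then patch up the one potentially troublesome translate, namely the translate by $0$.

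First I would dispose of the case $d\ge 1$, where $q\left(\left(-d,d\right)\right)=\R\big/\Z$, so the condition $h\left(F\right)\subseteq\left(-d,d\right)$ is vacuous and it suffices to cover all of $G$: this follows from \cref{genericity_in_the_monster_with_translations_in_Z} (giving $A+F'=G$) together with the bounded-two-sided-gaps property of $A$ from \cref{dp_minimal_implies_bounded_two_sided_gaps} (transferred to $G$ by elementarity), which lets one replace the translate by $0$ by nonzero translates. So assume $d<1$ and put $u:=\tfrac d2\in\left(0,\tfrac12\right)$. Choose $r,\delta>0$ small, with $\delta<r$ and $\left|\i p\right|+2r+\delta<\tfrac d2$ (possible since $\left|\i p\right|<\tfrac d2$). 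Let $B:=B_{\i p,r}$ as in \cref{approximate_balls_notation}, so $A\cap B$ is $\Z$-definable with $h\left(A\cap B\right)\subseteq\left(\i p-r,\i p+r\right)$. Since $p$ is an accumulation point of $h\left(A\right)$, every element of $h\left(A\right)$ sufficiently close to $p$ lies in $h\left(A\cap B\right)$, so $A\cap B$ is infinite. (This is the only place the accumulation hypothesis enters: if $p$ were isolated in $h\left(A\right)$, then by \cref{isolated_pt_of_img_of_Z_def_set_comes_frm_sngle_elmnt_of_Z} $A\cap B$ would be a single integer.)

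Next I would recenter. Using that $h\left(\Z\right)$ is dense, pick $m\in\Z$ with $h\left(m\right)\in\left(\i p-\delta,\i p+\delta\right)$ and set $E:=\left(A\cap B\right)-m$. Then $E$ is an infinite $\Z$-definable subset of $G$ with $h\left(E\right)=h\left(A\cap B\right)-h\left(m\right)\subseteq\left(-2r,2r\right)=\left(-\tfrac{4r}{2},\tfrac{4r}{2}\right)$. Applying \cref{local_genericity} to $E$, with the role of $r$ in that lemma played by $4r$ and with $u=\tfrac d2$, produces a finite $F_{0}\subseteq\Z$ with $h\left(F_{0}\right)\subseteq\left[-\tfrac d2-2r,\tfrac d2+2r\right]$ and
\[
h^{-1}\!\left(\left[-\tfrac d2,\tfrac d2\right]\right)\ \subseteq\ E+F_{0}\ =\ \left(A\cap B\right)+\left(F_{0}-m\right).
\]
By the choice of $r$ and $\delta$ one checks $h\left(F_{0}-m\right)=h\left(F_{0}\right)-h\left(m\right)\subseteq\left(-d,d\right)$.

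The remaining point, and the one I expect to be the main obstacle, is that the cover just obtained is by translates of $A\cap B$ rather than of $A$, and one of the translates may be by $0$. The key observation is that the offending translate $\left(A\cap B\right)+0=A\cap B$ needs to be re-covered only \emph{on $A\cap B$ itself}, and this is exactly what bounded two-sided gaps provides: $A\cap B$ is infinite and $\Z$-definable, so by \cref{dp_minimal_implies_bounded_two_sided_gaps} it has bounded two-sided gaps, say with bound $N$; given $x\in A\cap B$, pick $e\in\Z$ with $1\le|e|\le N$ and $x+e\in A\cap B$, so that $x\in A+\left(-e\right)$ while $h\left(-e\right)=h\left(x\right)-h\left(x+e\right)\in\left(-2r,2r\right)\subseteq\left(-d,d\right)$, since $h\left(x\right),h\left(x+e\right)\in h\left(B\right)\subseteq\left(\i p-r,\i p+r\right)$. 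Hence $A\cap B\subseteq A+F_{3}$, where $F_{3}:=\left\{ g\in\Z:1\le|g|\le N,\ h\left(g\right)\in\left(-2r,2r\right)\right\}$ is finite with $0\notin F_{3}$ and $h\left(F_{3}\right)\subseteq\left(-d,d\right)$. Setting $F:=\left(\left(F_{0}-m\right)\backslash\left\{ 0\right\} \right)\cup F_{3}$ then gives a finite $F\subseteq\Z$ with $0\notin F$ and $h\left(F\right)\subseteq\left(-d,d\right)$; and since $\left(A\cap B\right)+g\subseteq A+g$ for every $g\neq 0$ and $\left(A\cap B\right)+0=A\cap B\subseteq A+F_{3}$, the displayed inclusion yields $h^{-1}\!\left(\left(-\tfrac d2,\tfrac d2\right)\right)\subseteq h^{-1}\!\left(\left[-\tfrac d2,\tfrac d2\right]\right)\subseteq A+F$, which is the desired conclusion.
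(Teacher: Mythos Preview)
Your argument is correct, but it takes a noticeably longer route than the paper's.

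The paper proceeds as follows. It chooses $d'\in(0,\tfrac d2)$ with $p\in(-d',d')$ and sets $B:=B_{0,d',\frac{d}{2}}$, so $A\cap B$ is an infinite $\Z$-definable set with $h(A\cap B)\subseteq(-\tfrac d2,\tfrac d2)$. It then applies \cref{genericity_in_the_monster_with_translations_in_Z} directly to $A\cap B\cap\Z$ to obtain a finite $F'\subseteq\Z$ with $A\cap B\cap\Z+F'=\Z$. The key step you missed is a one-line trick for forcing $0\notin F$: since $F'$ is finite, choose any $m\in\Z\setminus F'$ and replace $F'$ by $F'':=F'-m$; then $0\notin F''$ while still $A\cap B+F''=G$ by elementarity. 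Finally the paper restricts to those $c\in F''$ whose translate $A\cap B+c$ actually meets $h^{-1}((-\tfrac d2,\tfrac d2))$, and checks $h(F)\subseteq(-d,d)$ by a short computation (splitting only on $d>\tfrac12$ versus $d\le\tfrac12$).

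By contrast, you go through \cref{local_genericity} (which is itself a consequence of \cref{genericity_in_the_monster_with_translations_in_Z}), recenter via a chosen $m$, and then spend the bulk of the argument eliminating the possibly-zero translate using \cref{dp_minimal_implies_bounded_two_sided_gaps}, transferred to $G$ by elementarity. This is valid, and the transfer step is fine since ``bound $N$ on two-sided gaps'' is a first-order condition over $\Z$. But the bounded-two-sided-gaps machinery is entirely avoidable here: the paper's translation trick $F'\mapsto F'-m$ with $m\notin F'$ does the same job in one line, and it also removes the need for your separate treatment of the case $d\ge 1$. What your approach does illustrate is that \cref{dp_minimal_implies_bounded_two_sided_gaps} can serve as an alternative mechanism for ``avoiding $0$'' in genericity arguments, which may be of independent interest even if it is overkill for this lemma.
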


\begin{proof}
Let $d^{\prime}\in\left(0,\frac{d}{2}\right)$ be such that $p\in\left(-d^{\prime},d^{\prime}\right)$,
and let $B:=B_{0,d^{\prime},\frac{d}{2}}$ as in \cref{approximate_balls_notation}.
Since $p\in\left(-d^{\prime},d^{\prime}\right)$ is an accumulation
point of $h\left(A\right)$, $h\left(A\right)\cap\left(-d^{\prime},d^{\prime}\right)$
is infinite. But $h\left(A\cap B\right)\supseteq h\left(A\cap h^{-1}\left(\left(-d^{\prime},d^{\prime}\right)\right)\right)=h\left(A\right)\cap\left(-d^{\prime},d^{\prime}\right)$,
so $h\left(A\cap B\right)$, and hence $A\cap B$, are infinite as
well. 

Since $A$ and $B$ are $\Z$-definable, so is $A\cap B$. So by elementarity,
$A\cap B\cap\Z$ is infinite. By \cref{genericity_in_the_monster_with_translations_in_Z}
there is a finite $F^{\prime}\subseteq\Z$ such that $A\cap B\cap\Z+F^{\prime}=\Z$.
Let $m\in\Z\backslash F^{\prime}$, and let $F^{\prime\prime}:=F^{\prime}-m$.
So $0\notin F^{\prime\prime}$ and $A\cap B\cap\Z+F^{\prime\prime}=\Z-m=\Z$.
By elementarity, $A\cap B+F^{\prime\prime}=G$.

Let $F:=\left\{ c\in F^{\prime\prime}\,:\,\left(A\cap B+c\right)\cap h^{-1}\left(\left(-\frac{d}{2},\frac{d}{2}\right)\right)\neq\emptyset\right\} $.
So $A+F\supseteq A\cap B+F\supseteq h^{-1}\left(\left(-\frac{d}{2},\frac{d}{2}\right)\right)$.
If $d>\frac{1}{2}$ then clearly $h\left(F\right)\subseteq\left(-d,d\right)$,
so suppose $d\le\frac{1}{2}$. Let $c\in\Z$. By the choice of $B$,
$h\left(A\cap B+c\right)\subseteq h\left(B+c\right)=h\left(B\right)+h\left(c\right)\subseteq\left(-\frac{d}{2}+\i{h\left(c\right)},\frac{d}{2}+\i{h\left(c\right)}\right)$,
so $A\cap B+c\subseteq h^{-1}\left(\left(-\frac{d}{2}+\i{h\left(c\right)},\frac{d}{2}+\i{h\left(c\right)}\right)\right)$.
Therefore, for $c\in F$ we get $h^{-1}\left(\left(-\frac{d}{2},\frac{d}{2}\right)\right)\cap h^{-1}\left(\left(-\frac{d}{2}+\i{h\left(c\right)},\frac{d}{2}+\i{h\left(c\right)}\right)\right)\neq\emptyset$,
so $\left(-\frac{d}{2},\frac{d}{2}\right)\cap\left(-\frac{d}{2}+\i{h\left(c\right)},\frac{d}{2}+\i{h\left(c\right)}\right)\neq\emptyset$.
Recall that these intervals actually denote their images under the
quotient map $q:\R\to\R\big/\Z$, i.e., we only have $q\left(\left(-\frac{d}{2}+\i{h\left(c\right)},\frac{d}{2}+\i{h\left(c\right)}\right)\right)\cap q\left(\left(-\frac{d}{2},\frac{d}{2}\right)\right)\neq\emptyset$.
But since $d\le\frac{1}{2}$, and since $\i{h\left(c\right)}\in\left[-\frac{1}{2},\frac{1}{2}\right)$,
this implies that $-d<\i{h\left(c\right)}<d$, so $h\left(c\right)\in\left(-d,d\right)$.
This shows that $h\left(F\right)\subseteq\left(-d,d\right)$.
\end{proof}
\begin{cor}
\label{existence_of_small_translation_with_infinite_intersection}For
every infinite $\Z$-definable set $B\subseteq G$ and every $0<r\in\R$,
there is $b\in\Z$ with $0<\i{h\left(b\right)}<r$ such that $B\cap\left(B+b\right)$
is infinite.
\end{cor}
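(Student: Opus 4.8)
The plan is to reduce the statement to \cref{local_genericity_around_an_accumulation_point} followed by a pigeonhole argument, the only preparatory step being to translate $B$ by an integer so that $h(B)$ gains an accumulation point close to $0$.

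First I would note that $h(B)$ is infinite: if it were finite, then every point of $h(B)$ would be isolated in $h(B)$, so by \cref{isolated_pt_of_img_of_Z_def_set_comes_frm_sngle_elmnt_of_Z} each fibre $B\cap h^{-1}(s)$ with $s\in h(B)$ would be a singleton, forcing $B$ to be finite. Hence $h(B)$, an infinite subset of the compact metric space $\R\big/\Z$, has an accumulation point $p$. Since $h(\Z)$ is dense in $\R\big/\Z$, I may choose $b_{0}\in\Z$ with $\left\vert\i{p-h(b_{0})}\right\vert<\tfrac{1}{4}\min\left(r,\tfrac{1}{2}\right)$. Replacing $B$ by the (still infinite, still $\Z$-definable) set $B-b_{0}$ and $p$ by $p-h(b_{0})$ changes nothing essential: for every $b$ one has $\left(B-b_{0}\right)\cap\left(\left(B-b_{0}\right)+b\right)=\left(B\cap\left(B+b\right)\right)-b_{0}$, so the target conclusion is equivalent before and after the substitution, and the condition $0<\i{h(b)}<r$ does not involve $B$ at all. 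So from now on $h(B)$ has an accumulation point $p$ with $\left\vert\i p\right\vert<\tfrac{1}{4}d$, where $d:=\min\left(r,\tfrac{1}{2}\right)$; in particular $p\in\left(-\tfrac{d}{2},\tfrac{d}{2}\right)$.

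Now I would apply \cref{local_genericity_around_an_accumulation_point} with $A:=B$, with this $d$, and with this $p$, getting a finite set $F\subseteq\Z$ such that $h(F)\subseteq\left(-d,d\right)$, $0\notin F$, and $h^{-1}\left(\left(-\tfrac{d}{2},\tfrac{d}{2}\right)\right)\subseteq B+F$. Put $B_{1}:=B\cap h^{-1}\left(\left(-\tfrac{d}{2},\tfrac{d}{2}\right)\right)$. Since $p\in\left(-\tfrac{d}{2},\tfrac{d}{2}\right)$ is an accumulation point of $h(B)$, the set $h(B_{1})=h(B)\cap\left(-\tfrac{d}{2},\tfrac{d}{2}\right)$ is infinite, hence so is $B_{1}$. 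But $B_{1}\subseteq B+F=\bigcup_{c\in F}\left(B+c\right)$ with $F$ finite, so by pigeonhole there is $c\in F$ with $B_{1}\cap\left(B+c\right)$ infinite; a fortiori $B\cap\left(B+c\right)$ is infinite.

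Finally I would fix the sign. From $0\notin F$ we get $c\neq0$, hence $\i{h(c)}\neq0$ because $h\restriction_{\Z}$ is injective; and $h(c)\in\left(-d,d\right)$ with $d\le\tfrac{1}{2}$ gives $0<\left\vert\i{h(c)}\right\vert<d\le r$. If $\i{h(c)}>0$ set $b:=c$; if $\i{h(c)}<0$ set $b:=-c$, using \cref{restricted_additivity_of_iota} to see $\i{h(b)}=-\i{h(c)}>0$ and observing that $x\mapsto x-c$ maps $B\cap\left(B+c\right)$ bijectively onto $B\cap\left(B+b\right)$, so the latter is infinite. In both cases $b\in\Z$ satisfies $0<\i{h(b)}<r$ and $B\cap\left(B+b\right)$ is infinite, which by the reduction above is what we wanted. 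I do not expect a genuine obstacle: the real work is all inside \cref{local_genericity_around_an_accumulation_point} (which itself rests on \cref{main_theorem_genericity} via \cref{genericity_in_the_monster_with_translations_in_Z}); the only points needing care are the initial translation, chosen precisely so that the finite set $F$ produced by the lemma lands inside the small arc $\left(-d,d\right)$, and the elementary sign bookkeeping at the end.
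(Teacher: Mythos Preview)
Your proof is correct and follows essentially the same route as the paper's: translate $B$ so that $h(B)$ has an accumulation point inside a small arc around $0$, apply \cref{local_genericity_around_an_accumulation_point}, pigeonhole over the finite set $F$, and fix the sign of the resulting $b$. The only noteworthy difference is that you invoke \cref{isolated_pt_of_img_of_Z_def_set_comes_frm_sngle_elmnt_of_Z} to show $h(B)$ is infinite, whereas the paper simply observes that $B\cap\Z$ is infinite by elementarity and $h\restriction_{\Z}$ is injective, so $h(B)\supseteq h(B\cap\Z)$ is infinite --- a lighter argument reaching the same conclusion.
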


\begin{proof}
We may assume $r<\frac{1}{2}$. By elementarity, $B\cap\Z$ is infinite.
Since $h\restriction_{\Z}$ is injective, $h\left(B\right)\supseteq h\left(B\cap\Z\right)$
is infinite, and hence has an accumulation point $p$. Since $h\left(\Z\right)$
is dense, there is $m\in\Z$ such that $h\left(m\right)\in\left(p-\frac{r}{2},p+\frac{r}{2}\right)$.
Let $B^{\prime}:=B-m$. So $p^{\prime}:=p-h\left(m\right)\in\left(-\frac{r}{2},\frac{r}{2}\right)$
is an accumulation point of $h\left(B^{\prime}\right)=h\left(B\right)-h\left(m\right)$. 

By \cref{local_genericity_around_an_accumulation_point} there is
a finite subset $F\subseteq\Z$ such that $h\left(F\right)\subseteq\left(-r,r\right)$,
$0\notin F$, and $h^{-1}\left(\left(-\frac{r}{2},\frac{r}{2}\right)\right)\subseteq B^{\prime}+F$.
So $h\left(B^{\prime}\cap\left(B^{\prime}+F\right)\right)\supseteq h\left(B^{\prime}\cap h^{-1}\left(\left(-\frac{r}{2},\frac{r}{2}\right)\right)\right)=h\left(B^{\prime}\right)\cap\left(-\frac{r}{2},\frac{r}{2}\right)$.
Since $p^{\prime}\in\left(-\frac{r}{2},\frac{r}{2}\right)$ is an
accumulation point of $h\left(B^{\prime}\right)$, we get that $h\left(B^{\prime}\cap\left(B^{\prime}+F\right)\right)$,
and hence $B^{\prime}\cap\left(B^{\prime}+F\right)$, are infinite.
But $B^{\prime}\cap\left(B^{\prime}+F\right)=B\cap\left(B+F\right)-m$,
so $B\cap\left(B+F\right)$ is infinite.

Since $F$ is finite, there is $b\in F$ such that $B\cap\left(B+b\right)$
is infinite. So $h\left(b\right)\in\left(-r,r\right)$. Since $0\notin F\subseteq\Z$
and $h\restriction_{\Z}$ is injective, $h\left(b\right)\neq0$. Note
that $B\cap\left(B-b\right)=B\cap\left(B+b\right)-b$ is also infinite,
so by replacing $b$ with $-b$ if necessary, we may assume that $h\left(b\right)\in\left(0,r\right)$.
Since $r<\frac{1}{2}$ we get $0<\i{h\left(b\right)}<r$.
\end{proof}
\begin{lem}
\label{uniformly_def_family_of_approximate_balls}There is a uniformly
$\Z$-definable family $\left\{ E_{r}\,:\,0<r\in\R\right\} $ of infinite
subsets of $G$, such that for each $r$, $h\left(E_{r}\right)\subseteq\left(-\frac{r}{2},\frac{r}{2}\right)$.
\end{lem}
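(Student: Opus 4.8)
The goal is to produce, uniformly in the parameter $r$, infinite $\Z$-definable sets $E_r$ whose image under $h$ lies in the small interval $\left(-\tfrac{r}{2},\tfrac{r}{2}\right)$. The plan is to build each $E_r$ by intersecting finitely many translates of a fixed $\Z$-definable approximate ball, using \cref{existence_of_small_translation_with_infinite_intersection} to guarantee at each step that we shrink the image of the set while keeping it infinite, and then to check that the whole construction can be carried out by a single formula with $r$ ranging over a $\Z$-definable index set (and hence, by shifting, over all positive reals).

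**Key steps.** First I would fix a $\Z$-definable set $B \subseteq G$ that is infinite; the natural choice is something like $B_{0,\frac14,\frac12}$ from \cref{approximate_balls_notation}, so that $h(B) \subseteq \left(-\tfrac12,\tfrac12\right)$, or just $G$ itself. Next, given a target radius $r$, I would iterate \cref{existence_of_small_translation_with_infinite_intersection}: starting with $B_0 := B$, at stage $k$ I obtain $b_k \in \Z$ with $0 < \i{h(b_k)} < \varepsilon_k$ (for a suitably small $\varepsilon_k$) such that $B_k := B_{k-1} \cap (B_{k-1} + b_k)$ is still infinite. The point of repeatedly intersecting with small translates is that the image $h(B_k)$ gets trapped inside the intersection of the translates of $h(B)$, which is an interval whose length can be driven below $r$ after finitely many steps — quantitatively, intersecting $h(B)$, an interval of length roughly $L$, with a translate of itself by a real amount $\delta$ produces something contained in an interval of length $L - \delta$, so after $O(L/\varepsilon)$ steps we are inside an interval of length $< \tfrac{r}{2}$. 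After recentering (translating by an appropriate integer $m$ so the resulting interval is symmetric about $0$, using density of $h(\Z)$), this yields one infinite $\Z$-definable set with image inside $\left(-\tfrac{r}{2},\tfrac{r}{2}\right)$.

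**Uniformity.** The second, more delicate half is making the family uniformly definable. The above construction, performed for a single $r$, uses finitely many integer parameters $b_1,\dots,b_N,m$; I would argue that the number of steps $N$ and the formula defining $B_N$ (as a Boolean combination of translates of the fixed formula for $B$) can be chosen to depend only on a coarse quantity like $\lceil 1/r \rceil$, so that for each $n \in \N$ there is a fixed formula $\psi_n(x; z_1,\dots,z_{k_n})$ and, as parameters range over $\Z^{k_n}$, the sets $\psi_n(G; \bar c)$ include infinite sets with image in $\left(-\tfrac{1}{2n},\tfrac{1}{2n}\right)$. Then I would invoke a compactness/elementarity argument: the statement "there exist integer parameters $\bar z$ such that $\psi_n(x;\bar z)$ is infinite and its $h$-image lies in the open interval of radius $\tfrac{1}{2n}$" is expressible, the witnesses exist in the monster, hence (pulling back to $\Z$ via elementarity, since the relevant conditions on $h$ translate into type-definable containments that can be approximated by $\Z$-definable ones as in \cref{approximate_balls_notation}) witnesses exist over $\Z$. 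Finally, to pass from the discrete family indexed by $n \in \N$ to one indexed by all $0 < r \in \R$, I would set $E_r := E_{\lceil 1/r \rceil}$ (or the corresponding member of the discrete family), which is still uniformly $\Z$-definable in the parameter $r$ since $\lceil 1/r\rceil$ only takes countably many values and the index set is partitioned into $\Z$-definable (indeed, externally definable) pieces.

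**Main obstacle.** The crux is the uniformity claim: \cref{existence_of_small_translation_with_infinite_intersection} is an existence statement with no built-in control over the size of $b$ beyond the inequality $0 < \i{h(b)} < r$, and iterating it naively gives a bound $N$ on the number of steps that a priori could depend on the particular $B_k$'s encountered rather than on $r$ alone. I expect the resolution is to track the image length: since each step strictly decreases the length of the containing interval by at least the translation amount, and one is free to choose the translation amount of order $r$ at every step, a uniform bound $N = N(r) = O(1/r)$ on the number of iterations does hold, and the formula defining $E_r$ is then the corresponding fixed Boolean combination — so the genuine content is the quantitative "interval shrinks by the translation length" observation together with the bookkeeping to phrase the existence of suitable integer translates as a first-order (hence $\Z$-transferable) statement. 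A secondary subtlety is ensuring $E_r$ is genuinely infinite and not merely nonempty at each stage, which is exactly what \cref{existence_of_small_translation_with_infinite_intersection} is designed to deliver, so no extra work is needed there beyond invoking it.
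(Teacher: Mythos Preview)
Your plan has a genuine gap in precisely the place you flag as the ``main obstacle,'' and your proposed resolution does not work.

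First, the quantitative claim is not supported by \cref{existence_of_small_translation_with_infinite_intersection}. That corollary, given an infinite $\Z$-definable $B$ and a bound $r$, produces some $b\in\Z$ with $0<\i{h(b)}<r$ and $B\cap(B+b)$ infinite; it gives \emph{no lower bound} on $\i{h(b)}$. So at step $k$ the containing interval shrinks by $\i{h(b_k)}$, which could be arbitrarily close to $0$. You therefore cannot guarantee that ``the translation amount is of order $r$ at every step,'' and you have no control on how many iterations are needed even for a fixed $r$.

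Second, even granting a bound of the form $N(r)=O(1/r)$, this is not uniform: as $r\to 0$ the number of conjuncts in your formula tends to infinity, so the sets $E_r$ are instances of infinitely many distinct formulas $\psi_n$. Your final paragraph tries to repair this by setting $E_r:=E_{\lceil 1/r\rceil}$ and partitioning the index set, but ``uniformly $\Z$-definable'' means \emph{one} formula $\psi(x;\bar z)$ with $E_r=\psi(G;\bar c_r)$ for suitable $\bar c_r$; a countable union of families, each from a different $\psi_n$, is not uniform in this sense.

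The paper's proof takes a different route that yields a \emph{fixed} formula with three integer parameters for every $r$. Starting from a fixed $A:=B_{0,\frac{1}{32}}$, one analyses $C:=\i{h(A)}$ and locates the extreme accumulation points $s'=\min C'$, $t'=\max C'$. The key claim is that for every $r>0$ one can choose $b_1,b_2\in\Z$ with $\i{h(b_1)}<0<\i{h(b_2)}$ so that $(A+b_1)\cap(A+b_2)$ is infinite and the accumulation part of its image already lies in $(-\tfrac{r}{2},\tfrac{r}{2})$; this uses \cref{local_genericity_around_an_accumulation_point} rather than iterated shrinking. What remains outside $(-\tfrac{r}{2},\tfrac{r}{2})$ is then only finitely many isolated points coming from the tails $S,T$ of $C$, and a single further intersection with a small translate (one application of \cref{existence_of_small_translation_with_infinite_intersection}, with the translation chosen smaller than all the relevant gaps) eliminates those. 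Thus every $E_r$ has the shape $(A+b_1)\cap(A+b_2)\cap(A+b_1+c)\cap(A+b_2+c)$, a single formula in the parameters $b_1,b_2,c$, and uniformity is immediate. The idea you are missing is to exploit the structure of the accumulation set of $h(A)$ to jump directly to the target scale, rather than inching toward it by repeated small intersections.
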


\begin{proof}
Let $A:=B_{0,\frac{1}{32}}$. By \cref{if_P_is_type_def_then_its_img_under_h_is_closed},
$h\left(A\right)$ is closed, so $C:=\i{h\left(A\right)}$ is closed
as well. Let $s:=\min\left(C\right)$ and $t:=\max\left(C\right)$.
Let $C^{\prime}\subseteq C$ be the set of accumulation points of
$C$, which is closed as well, and let $s^{\prime}:=\min\left(C^{\prime}\right)$
and $t^{\prime}:=\max\left(C^{\prime}\right)$. Let $S:=C\cap\left[s,s^{\prime}\right)$
and $T:=C\cap\left(t^{\prime},t\right]$. By definition, for every
$u\in\left[s,s^{\prime}\right)$, the interval $\left[s,u\right]$
contains no accumulation points of $C$, so $C\cap\left[s,u\right]$
is finite. Therefore, either $S$ is finite, or $S$ has order-type
$\omega$ with $\sup\left(S\right)=s^{\prime}$. Similarly, either
$T$ is finite, or $T$ has order-type $\left(\omega,>\right)$ with
$\inf\left(T\right)=T^{\prime}$. Denote $S=\left\{ s_{i}\right\} _{i<\alpha}$,
$T=\left\{ t_{i}\right\} _{i<\beta}$, where $\alpha,\beta\in\omega+1$
and for every $i<j$, $s_{i}<s_{j}$ and $t_{i}>t_{j}$. Note that
if $S=\emptyset$ (resp. $T=\emptyset$) then $s=s^{\prime}$ (resp.
$t=t^{\prime}$), and otherwise, $s=s_{0}$ (resp. $t=t_{0}$).
\begin{claim}
\label{claim_inside_uniformly_def_family_of_approximate_balls}For
every $0<r\in\R$ there are $b_{1},b_{2}\in\Z$ such that:
\begin{enumerate}
\item $\left(A+b_{1}\right)\cap\left(A+b_{2}\right)$ is infinite,
\item $\i{h\left(b_{1}\right)}<0<\i{h\left(b_{2}\right)}$, and
\item $-\frac{r}{2}<s^{\prime}+\i{h\left(b_{2}\right)}<0<t^{\prime}+\i{h\left(b_{1}\right)}<\frac{r}{2}$.
\end{enumerate}
\end{claim}

\begin{proof}[Proof of Claim]
Let $0<d<\min\left(t^{\prime},-s^{\prime},\frac{r}{2},\frac{1}{16}\right)$,
and let $b_{2}^{\prime}\in\Z$ be such that $-\frac{d}{4}<s^{\prime}+\i{h\left(b_{2}^{\prime}\right)}<0$
(which exists since $\i{h\left(\Z\right)}$ is dense in $\left(-\frac{1}{2},\frac{1}{2}\right)$
and $s^{\prime}<\frac{1}{2}$). By definition of $s^{\prime}$, $q\left(s^{\prime}+\i{h\left(b_{2}^{\prime}\right)}\right)=q\left(s^{\prime}\right)+h\left(b_{2}^{\prime}\right)$
is an accumulation point of $q\left(C+\i{h\left(b_{2}^{\prime}\right)}\right)=q\left(C\right)+h\left(b_{2}^{\prime}\right)=h\left(A\right)+h\left(b_{2}^{\prime}\right)=h\left(A+b_{2}^{\prime}\right)$,
so by \cref{local_genericity_around_an_accumulation_point} there
is a finite subset $F\subseteq\Z$ such that $h\left(F\right)\subseteq\left(-d,d\right)$,
$0\notin F$, and $h^{-1}\left(\left(-\frac{d}{2},\frac{d}{2}\right)\right)\subseteq A+b_{2}^{\prime}+F$.

Let $b_{1}^{\prime}\in\Z$ be such that $0<t^{\prime}+\i{h\left(b_{1}^{\prime}\right)}<\frac{d}{2}$
and $t^{\prime}+\i{h\left(b_{1}^{\prime}\right)}\notin\i{h\left(F\right)}+s^{\prime}+\i{h\left(b_{2}^{\prime}\right)}$
(which exists since $\i{h\left(\Z\right)}$ is dense in $\left(-\frac{1}{2},\frac{1}{2}\right)$,
$t^{\prime}>\frac{1}{2}$, and $F$ is finite). By definition of $t^{\prime}$,
$q\left(t^{\prime}+\i{h\left(b_{1}^{\prime}\right)}\right)$ is an
accumulation point of $q\left(C+\i{h\left(b_{1}^{\prime}\right)}\right)=h\left(A+b_{1}^{\prime}\right)$,
so $h\left(A+b_{1}^{\prime}\right)\cap\left(-\frac{d}{2},\frac{d}{2}\right)$
is infinite. But $h\left(\left(A+b_{1}^{\prime}\right)\cap\left(A+b_{2}^{\prime}+F\right)\right)\supseteq h\left(\left(A+b_{1}^{\prime}\right)\cap h^{-1}\left(\left(-\frac{d}{2},\frac{d}{2}\right)\right)\right)=h\left(A+b_{1}^{\prime}\right)\cap\left(-\frac{d}{2},\frac{d}{2}\right)$,
so $h\left(\left(A+b_{1}^{\prime}\right)\cap\left(A+b_{2}^{\prime}+F\right)\right)$,
and hence $\left(A+b_{1}^{\prime}\right)\cap\left(A+b_{2}^{\prime}+F\right)$,
are infinite as well. 

Since $F$ is finite, there is $c\in F$ such that $\left(A+b_{1}^{\prime}\right)\cap\left(A+b_{2}^{\prime}+c\right)$
is infinite. By the choice of $b_{1}^{\prime}$, $t^{\prime}+\i{h\left(b_{1}^{\prime}\right)}\neq\i{h\left(c\right)}+s^{\prime}+\i{h\left(b_{2}^{\prime}\right)}$.
Suppose towards a contradiction that $t^{\prime}+\i{h\left(b_{1}^{\prime}\right)}<\i{h\left(c\right)}+s^{\prime}+\i{h\left(b_{2}^{\prime}\right)}$,
and let $u\in\R$ be such that $t^{\prime}+\i{h\left(b_{1}^{\prime}\right)}<u<\i{h\left(c\right)}+s^{\prime}+\i{h\left(b_{2}^{\prime}\right)}$.
By the definitions of $s$ and $t$, $\i{h\left(\left(A+b_{1}^{\prime}\right)\cap\left(A+b_{2}^{\prime}+c\right)\right)}\subseteq\left[\i{h\left(c\right)}+s+\i{h\left(b_{2}^{\prime}\right)},t+\i{h\left(b_{1}^{\prime}\right)}\right]$,
therefore 
\begin{gather*}
\i{h\left(\left(A+b_{1}^{\prime}\right)\cap\left(A+b_{2}^{\prime}+c\right)\right)}=\\
\i{h\left(\left(A+b_{1}^{\prime}\right)\cap\left(A+b_{2}^{\prime}+c\right)\right)}\cap\left(\left[\i{h\left(c\right)}+s+\i{h\left(b_{2}^{\prime}\right)},u\right]\cup\left[u,t+\i{h\left(b_{1}^{\prime}\right)}\right]\right)\subseteq\\
\left(\i{h\left(A+b_{1}^{\prime}\right)}\cap\left[u,t+\i{h\left(b_{1}^{\prime}\right)}\right]\right)\cup\left(\i{h\left(A+b_{2}^{\prime}+c\right)}\cap\left[\i{h\left(c\right)}+s+\i{h\left(b_{2}^{\prime}\right)},u\right]\right)
\end{gather*}
By definition of $t^{\prime}$, and since $t^{\prime}+\i{h\left(b_{1}^{\prime}\right)}<u$,
the set $\i{h\left(A+b_{1}^{\prime}\right)}\cap\left[u,t+\i{h\left(b_{1}^{\prime}\right)}\right]=\left(\i{h\left(A\right)}+\i{h\left(b_{1}^{\prime}\right)}\right)\cap\left(\left[u-\i{h\left(b_{1}^{\prime}\right)},t\right]+\i{h\left(b_{1}^{\prime}\right)}\right)=\i{h\left(A\right)}\cap\left[u-\i{h\left(b_{1}^{\prime}\right)},t\right]+\i{h\left(b_{1}^{\prime}\right)}$
is finite. Similarly, by definition of $s^{\prime}$, and since $u<\i{h\left(c\right)}+s^{\prime}+\i{h\left(b_{2}^{\prime}\right)}$,
the set $\i{h\left(A+b_{2}^{\prime}+c\right)}\cap\left[\i{h\left(c\right)}+s+\i{h\left(b_{2}^{\prime}\right)},u\right]$
is finite. So $\i{h\left(\left(A+b_{1}^{\prime}\right)\cap\left(A+b_{2}^{\prime}+c\right)\right)}$,
and hence $\i{h\left(\left(A+b_{1}^{\prime}\right)\cap\left(A+b_{2}^{\prime}+c\right)\cap\Z\right)}$,
are finite as well. Since $\iota$ and $h\restriction_{\Z}$ are injective,
$\left(A+b_{1}^{\prime}\right)\cap\left(A+b_{2}^{\prime}+c\right)\cap\Z$
is finite. Since $b_{1}^{\prime},b_{2}^{\prime},c\in\Z$ and $A$
is $\Z$-definable, $\left(A+b_{1}^{\prime}\right)\cap\left(A+b_{2}^{\prime}+c\right)$
is finite, a contradiction. So $t^{\prime}+\i{h\left(b_{1}^{\prime}\right)}>\i{h\left(c\right)}+s^{\prime}+\i{h\left(b_{2}^{\prime}\right)}$.

Since $s^{\prime}+\i{h\left(b_{2}^{\prime}\right)}>-\frac{d}{4}$,
$t^{\prime}+\i{h\left(b_{1}^{\prime}\right)}<\frac{d}{2}$, and $\i{h\left(c\right)}>-d$,
we have $0<\left(t^{\prime}+\i{h\left(b_{1}^{\prime}\right)}\right)-\left(\i{h\left(c\right)}+s^{\prime}+\i{h\left(b_{2}^{\prime}\right)}\right)<\frac{7d}{4}$,
hence there exists $c^{\prime}\in\Z$ such that $-d<-\frac{7d}{8}<\i{h\left(c\right)}+s^{\prime}+\i{h\left(b_{2}^{\prime}\right)}+\i{h\left(c^{\prime}\right)}<0<t^{\prime}+\i{h\left(b_{1}^{\prime}\right)}+\i{h\left(c^{\prime}\right)}<\frac{7d}{8}<d$.
Let $b_{1}:=b_{1}^{\prime}+c^{\prime}$, $b_{2}:=b_{2}^{\prime}+c+c^{\prime}$.
So $0<t^{\prime}+\i{h\left(b_{1}\right)}<d<t^{\prime}$ and $0>s^{\prime}+\i{h\left(b_{2}\right)}>-d>s^{\prime}$,
so in particular, $\i{h\left(b_{1}\right)}<0<\i{h\left(b_{2}\right)}$.
Moreover, since $d<\frac{r}{2}$ we have $-\frac{r}{2}<s^{\prime}+\i{h\left(b_{2}\right)}<0<t^{\prime}+\i{h\left(b_{1}\right)}<\frac{r}{2}$.
Finally, $\left(A+b_{1}\right)\cap\left(A+b_{2}\right)=\left(A+b_{1}^{\prime}\right)\cap\left(A+b_{2}^{\prime}+c\right)+c^{\prime}$
is infinite.
\end{proof}
Fix $0<r\in\R$ and let $b_{1},b_{2}\in\Z$ be as in \cref{claim_inside_uniformly_def_family_of_approximate_balls}
for this $r$. Let $E_{r}^{\prime}:=\left(A+b_{1}\right)\cap\left(A+b_{2}\right)$.
Note that $\i{h\left(E_{r}^{\prime}\right)}\subseteq\left[s^{\prime}+\i{h\left(b_{2}\right)},t^{\prime}+\i{h\left(b_{1}\right)}\right]\cup\left(S+\i{h\left(b_{2}\right)}\right)\cup\left(T+\i{h\left(b_{1}\right)}\right)$.

Let $0<r^{\prime}\in\R$ be such that $-\frac{r}{2}<-r^{\prime}<s^{\prime}+\i{h\left(b_{2}\right)}<0<t^{\prime}+\i{h\left(b_{1}\right)}<r^{\prime}<\frac{r}{2}$.
Let $I_{1}:=\left\{ i<\alpha\,:\,s_{i}+\i{h\left(b_{2}\right)}\le-r^{\prime}\right\} $
and $I_{2}:=\left\{ i<\beta\,:\,t_{i}+\i{h\left(b_{1}\right)}\ge r^{\prime}\right\} $.
By the definitions of $s^{\prime}$ and $t^{\prime}$, $I_{1}$ and
$I_{2}$ are finite. Let $0<r^{\prime\prime}\in\R$ be such that:
\begin{enumerate}
\item For each $0<i\in I_{1}$, $r^{\prime\prime}<s_{i}-s_{i-1}$, 
\item For each $0<i\in I_{2}$, $r^{\prime\prime}<t_{i-1}-t_{i}$,
\item $r^{\prime}+r^{\prime\prime}<\frac{r}{2}$.
\end{enumerate}
By \cref{existence_of_small_translation_with_infinite_intersection},
there is $c\in\Z$ with $0<\i{h\left(c\right)}<r^{\prime\prime}$
such that $E_{r}:=E_{r}^{\prime}\cap\left(E_{r}^{\prime}+c\right)$
is infinite. Note that the family $\left\{ E_{r}\,:\,0<r\in\R\right\} $
is uniformly $\Z$-definable. We have $\i{h\left(E_{r}\right)}\subseteq\left(-r^{\prime},r^{\prime}+r^{\prime\prime}\right)\cup\left\{ s_{i}+\i{h\left(b_{2}\right)}\,:\,i\in I_{1}\right\} \cup\left\{ t_{i}+\i{h\left(b_{1}\right)}+\i{h\left(c\right)}\,:\,i\in I_{2}\right\} $.

Let $i\in I_{1}$. Suppose towards a contradiction that $s_{i}+\i{h\left(b_{2}\right)}\in\i{h\left(E_{r}\right)}$.
So in particular, $s_{i}+\i{h\left(b_{2}\right)}-\i{h\left(c\right)}\in\i{h\left(E_{r}^{\prime}\right)}$.
Since $\i{h\left(c\right)}>0$, $s_{i}+\i{h\left(b_{2}\right)}-\i{h\left(c\right)}\le-r^{\prime}$,
so there is $j\in I_{1}$, $j<i$, such that $s_{i}+\i{h\left(b_{2}\right)}-\i{h\left(c\right)}=s_{j}+\i{h\left(b_{2}\right)}$.
So $s_{j+1}-s_{j}\le s_{i}-s_{j}=\i{h\left(c\right)}<r^{\prime\prime}$,
a contradiction. Therefore $s_{i}+\i{h\left(b_{2}\right)}\notin\i{h\left(E_{r}\right)}$.

Let $i\in I_{2}$. Suppose towards a contradiction that $t_{i}+\i{h\left(b_{1}\right)}+\i{h\left(c\right)}\in\i{h\left(E_{r}\right)}$.
So in particular, $t_{i}+\i{h\left(b_{1}\right)}+\i{h\left(c\right)}\in\i{h\left(E_{r}^{\prime}\right)}$.
Since $\i{h\left(c\right)}>0$, $t_{i}+\i{h\left(b_{1}\right)}+\i{h\left(c\right)}\ge r^{\prime}$,
so there is $j\in I_{2}$, $j<i$, such that $t_{i}+\i{h\left(b_{1}\right)}+\i{h\left(c\right)}=t_{j}+\i{h\left(b_{1}\right)}$.
So $t_{j}-t_{j+1}\le t_{j}-t_{i}=\i{h\left(c\right)}<r^{\prime\prime}$,
a contradiction. Therefore $t_{i}+\i{h\left(b_{1}\right)}+\i{h\left(c\right)}\notin\i{h\left(E_{r}\right)}$.

This shows that $\i{h\left(E_{r}\right)}\subseteq\left(-r^{\prime},r^{\prime}+r^{\prime\prime}\right)\subseteq\left(-\frac{r}{2},\frac{r}{2}\right)$.
\end{proof}
\begin{notation}
For $D\subseteq G$ and $b\in G$, denote $\Delta_{D,b}:=\left(D\bigtriangleup\left(D+b\right)\right)\cup\left(D\bigtriangleup\left(D-b\right)\right)$.
\end{notation}

\begin{lem}
\label{almost_invariance_under_certain_infinitesimal_translations}Let
$D$ be a $\Z$-definable subset of $G$, and let $\left(\epsilon_{i}\right)_{i<\omega}$
be indiscernible over $\Z$. Denote $\epsilon:=\epsilon_{1}-\epsilon_{0}$.
Then $h\left(\Delta_{D,\epsilon}\right)$ is finite.
\end{lem}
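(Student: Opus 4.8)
The plan is to exploit the fact that $\left(\epsilon_{i}\right)_{i<\omega}$ being indiscernible over $\Z$ forces the sets $D\bigtriangleup\left(D+\epsilon_{i}\right)$ (and $D\bigtriangleup\left(D-\epsilon_{i}\right)$) to be ``small'' in a way that is controlled by the dp-minimality of $\cZ$ together with the structure of the homomorphism $h$. Suppose towards a contradiction that $h\left(\Delta_{D,\epsilon}\right)$ is infinite. Since $\Delta_{D,\epsilon}$ is definable over $\Z\cup\{\epsilon_{0},\epsilon_{1}\}$, its image under $h$ is closed in $\R\big/\Z$ (by \cref{if_P_is_type_def_then_its_img_under_h_is_closed}, applied over a small model containing $\epsilon_{0},\epsilon_{1}$), so if it is infinite it has an accumulation point, and in particular it contains infinitely many points; more to the point, I want to extract from this an inp-pattern of depth $2$. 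First I would set $D_i := D \bigtriangleup (D + \epsilon_i)$ for $i<\omega$; by indiscernibility these are ``copies'' of $D_0$, and the key observation is that $\Delta_{D,\epsilon} = \left(D \bigtriangleup (D+\epsilon_1)\right) \cup \cdots$ involves the \emph{difference} $\epsilon_1 - \epsilon_0$, so $x \in \Delta_{D,\epsilon}$ is governed by $x \in D_1 \setminus D_0$ type conditions relating consecutive terms.

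Here is the mechanism I would use. The natural move is: since $h(\Delta_{D,\epsilon})$ is infinite and $h\restriction_\Z$ has dense image, for a suitable small translation we can build, via a Ramsey/indiscernibility argument, a ``two-dimensional'' array of translates of $D$ and then apply \cref{k_inp_eq_k_ict_under_nip} to contradict dp-minimality. More concretely: from $h(\Delta_{D,\epsilon})$ infinite, pick an accumulation point $p$ of this set, and using \cref{local_genericity_around_an_accumulation_point} applied to $\Delta_{D,\epsilon}$ (after translating into a symmetric interval around $0$) we get a finite $F\subseteq\Z$ with $0\notin F$ and $h^{-1}(\text{small interval}) \subseteq \Delta_{D,\epsilon} + F$. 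But $\Delta_{D,\epsilon}+F$ for a finite $F$ cannot fill up an infinite-index type-definable set unless $\Delta_{D,\epsilon}$ is itself ``large,'' and largeness of a symmetric difference of a $\Z$-definable set with its $\epsilon$-translate is exactly what one bounds via dp-minimality. So I would combine: (a) indiscernibility of $\left(\epsilon_i\right)$ to produce a mutually indiscernible-style array $b_{0,i}:=\epsilon_{2i}$, $b_{1,i}:=\epsilon_{2i+1}$ (or split $\omega$ into two blocks as in the proof of \cref{dp_minimal_implies_bounded_two_sided_gaps}), and formulas $\phi_0(x;\epsilon_{2i},\epsilon_{2i+1})$, $\phi_1(x;\epsilon_{\omega+2j},\epsilon_{\omega+2j+1})$ expressing ``$x$ lies in $D\bigtriangleup(D+(\epsilon_{2i+1}-\epsilon_{2i}))$ shifted into the relevant interval'', and (b) the $l$-inconsistency of each row, which follows from NIP exactly as in \cref{dp_minimal_implies_bounded_two_sided_gaps}, while consistency of each path follows because $h(\Delta_{D,\epsilon})$ being infinite gives us, for each pair of indices, an explicit witness in $\Delta_{D,\epsilon}$ after an appropriate small $\Z$-translation.

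The step I expect to be the main obstacle is arranging the two rows of the inp-pattern so that they are genuinely independent — i.e. producing a single element $a$ lying simultaneously in a prescribed cell of the ``first-block'' family and a prescribed cell of the ``second-block'' family. In \cref{dp_minimal_implies_bounded_two_sided_gaps} this worked because the relevant sets $\left(A^*+a_{2i+1}\right)\setminus\left(A^*+a_{2i}\right)$ had the concrete common witness $a_{\omega+2j+1}+a_{2i+1}$; here I need the analogous ``sum'' witness for symmetric differences, which requires that $h$ of that witness actually falls in the intersection of the two target intervals. This is where I would use \cref{existence_of_small_translation_with_infinite_intersection} and the density of $h(\Z)$ to align the intervals, together with the fact that $h\restriction_\Z$ is injective so that distinct $\Z$-translates of $D$ really are distinct. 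Once the array, the $l$-inconsistency of rows, and the consistency of all paths are in place, this is an inp-pattern of depth $2$ in a NIP theory, hence (by \cref{k_inp_eq_k_ict_under_nip}) a contradiction with $\kappa_{ict}(\cZ)\le 2$; therefore $h(\Delta_{D,\epsilon})$ must be finite. A secondary technical point to handle carefully is the wraparound in $\R\big/\Z$: I would work throughout inside a fixed small interval around $0$, using $\iota$ and \cref{restricted_additivity_of_iota} to make the interval arithmetic legitimate, exactly as is done in the lemmas preceding this one.
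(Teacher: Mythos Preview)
Your outline has the right overall shape --- assume $h(\Delta_{D,\epsilon})$ is infinite, build an inp-pattern of depth $2$, contradict dp-minimality --- but the construction of the pattern has a genuine gap.

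You propose both rows to be symmetric-difference sets coming from two blocks of the same indiscernible sequence $(\epsilon_i)$, mimicking \cref{dp_minimal_implies_bounded_two_sided_gaps}. This does not work here. The sets $B_i := (D+\epsilon_{2i})\bigtriangleup(D+\epsilon_{2i+1})$ are $k$-inconsistent by NIP, so they make a fine \emph{first} row; but the analogous sets built from a second block $(\epsilon_{\omega+2j})$ are, by indiscernibility, automorphic copies of the $B_i$ over $\Z$, and in particular \emph{all} of them share the same $h$-image. There is no arithmetic ``sum'' structure like $a_{\omega+2j+1}+a_{2i+1}$ from \cref{dp_minimal_implies_bounded_two_sided_gaps} to produce cross-witnesses, and nothing prevents $B_i\cap B_j' = \emptyset$. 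So you cannot verify consistency of paths. Also note that your proposed application of \cref{local_genericity_around_an_accumulation_point} to $\Delta_{D,\epsilon}$ is illegitimate as stated: that lemma requires the set to be $\Z$-definable, while $\Delta_{D,\epsilon}$ is only definable over $\Z\cup\{\epsilon_0,\epsilon_1\}$.

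The paper's proof keeps the $B_i$'s as one row but builds the \emph{other} row out of a completely different family: the uniformly $\Z$-definable approximate balls $E_{r}$ from \cref{uniformly_def_family_of_approximate_balls}, translated to sit over distinct points $u_0,\dots,u_{N-1}$ of the infinite set $h(B_0)$. These translates are $2$-inconsistent because their $h$-images are pairwise disjoint, and they meet every $B_i$ because $h(B_i)=h(B_0)\ni u_n$. An ultrafilter is used to select, for each $N$, a single translate per $u_n$ that works for infinitely many $i$ simultaneously, and then compactness (using the uniform definability of the $E_r$'s) yields the inp-pattern. The crucial ingredient you are missing is precisely \cref{uniformly_def_family_of_approximate_balls}: without a uniformly definable family of ``small intervals'' you have no candidate for the second row.
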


\begin{proof}
Suppose otherwise. Without loss of generality, $h\left(D\bigtriangleup\left(D+\epsilon\right)\right)$
is infinite. Denote $B_{i}:=\left(D+\epsilon_{2i}\right)\bigtriangleup\left(D+\epsilon_{2i+1}\right)$.
So $h\left(B_{0}\right)$ is infinite. Since the theory is NIP, the
family $\left\{ B_{i}\right\} _{i<\omega}$ is $k$-inconsistent for
some $k<\omega$.

Note that $h\left(B_{i}\right)$ does not depend on $i$: Let $i,j<\omega$,
and let $u\in h\left(B_{j}\right)$. So $B_{j}\cap h^{-1}\left(u\right)\neq\emptyset$.
By indiscernibility, there is $\sigma\in\text{Aut}\left(G\big/\Z\right)$
such that $\sigma\left(\epsilon_{2j}\right)=\epsilon_{2i}$ and $\sigma\left(\epsilon_{2j+1}\right)=\epsilon_{2i+1}$.
So $\sigma\left(B_{j}\right)=B_{i}$, therefore, since $h^{-1}\left(u\right)$
is type-definable over $\Z$, $B_{i}\cap h^{-1}\left(u\right)=\sigma\left(B_{j}\cap h^{-1}\left(u\right)\right)\neq\emptyset$,
which gives $u\in h\left(B_{i}\right)$.

Let $\left\{ u_{n}\,:\,n<\omega\right\} \subseteq\i{h\left(B_{0}\right)}\backslash\left\{ -\frac{1}{2}\right\} $
be distinct, and let $c_{n}\in G$ be such that $\i{h\left(c_{n}\right)}=u_{n}$.
Also fix a non-principal ultrafilter $U$ on $\omega$.

Fix $N<\omega$. Then there is $0<r_{N}<\frac{1}{2}$ such that for
every $0\le n,m\le N-1$, $n\neq m$, we have $\left(u_{n}-2r_{N},u_{n}+2r_{N}\right)\subseteq\left(-\frac{1}{2},\frac{1}{2}\right)$
and $\left(u_{n}-2r_{N},u_{n}+2r_{N}\right)\cap\left(u_{m}-2r_{N},u_{m}+2r_{N}\right)=\emptyset$.
By \cref{local_genericity} there is a finite $F\subseteq\Z$ such
that 
\[
h^{-1}\left(\left[-r_{N},r_{N}\right]\right)\subseteq E_{r_{N}}+F\subseteq h^{-1}\left(\left(-2r_{N},2r_{N}\right)\right)
\]
(where $E_{r_{N}}$ is given by \cref{uniformly_def_family_of_approximate_balls}).
Let $0\le n\le N-1$. Then 
\[
h^{-1}\left(\left[u_{n}-r_{N},u_{n}+r_{N}\right]\right)\subseteq E_{r_{N}}+F+c_{n}\subseteq h^{-1}\left(\left(u_{n}-2r_{N},u_{n}+2r_{N}\right)\right)
\]

For each $i<\omega$, $u_{n}\in\i{h\left(B_{0}\right)}=\i{h\left(B_{i}\right)}$,
so $B_{i}\cap h^{-1}\left(q\left(u_{n}\right)\right)\neq\emptyset$,
and therefore $B_{i}\cap\left(E_{r_{N}}+F+c_{n}\right)\neq\emptyset$.
So for some $d\in F$ we have $B_{i}\cap\left(E_{r_{N}}+d+c_{n}\right)\neq\emptyset$.
For each $d\in F$ let $I_{N,n,d}:=\left\{ i<\omega\,:\,B_{i}\cap\left(E_{r_{N}}+d+c_{n}\right)\neq\emptyset\right\} $.
Then $\bigcup_{d\in F}I_{N,n,d}=\omega$, so there is $d_{N,n}\in F$
such that $I_{N,n,d_{N,n}}\in U$. Let $I_{N}:=\bigcap_{n=0}^{N-1}I_{N,n,d_{N,n}}$.
So $I_{N}\in U$, and in particular is infinite. Denote $e_{N,n}:=d_{N,n}+c_{n}$. 

Note that $E_{r_{N}}+e_{N,n}\subseteq h^{-1}\left(\left(u_{n}-2r_{N},u_{n}+2r_{N}\right)\right)$,
so for every $0\le n,m\le N-1$, $n\neq m$, $\left(E_{r_{N}}+e_{N,n}\right)\cap\left(E_{r_{N}}+e_{N,m}\right)\subseteq h^{-1}\left(\left(u_{n}-2r_{N},u_{n}+2r_{N}\right)\cap\left(u_{m}-2r_{N},u_{m}+2r_{N}\right)\right)=\emptyset$,
i.e., the family $\left\{ E_{r_{N}}+e_{N,n}\,:\,0\le n\le N-1\right\} $
is $2$-inconsistent.

By the definition of $I_{N}$, for every $i\in I_{N}$ and every $0\le n\le N-1$
we have $B_{i}\cap\left(E_{r_{N}}+e_{N,n}\right)\neq\emptyset$. Note
that the family $\left\{ E_{r_{N}}+e_{N,n}\,:\,N<\omega,\,0\le n\le N-1\right\} $
is uniformly definable. Therefore, and since $I_{N}$ is infinite
and $N$ is arbitrary, by compactness we get an inp-pattern of depth
$2$, contradicting the dp-minimality of $\mathcal{Z}$.
\end{proof}
\begin{cor}
\label{only_finitely_many_isolated_points}Let $D$ be a $\Z$-definable
subset of $G$. Then $h\left(D\right)$ has only finitely many isolated
points. 
\end{cor}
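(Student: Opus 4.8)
The plan is to argue by contradiction, using \cref{almost_invariance_under_certain_infinitesimal_translations} as the main input. Suppose $h\left(D\right)$ has infinitely many isolated points. Since $\R\big/\Z$ is compact, I may pick distinct isolated points $\left(s_{n}\right)_{n<\omega}$ of $h\left(D\right)$ converging to some $p\in\R\big/\Z$. By \cref{isolated_pt_of_img_of_Z_def_set_comes_frm_sngle_elmnt_of_Z}, for each $n$ there is $b_{n}\in\Z$ with $D\cap h^{-1}\left(s_{n}\right)=\left\{ b_{n}\right\}$; since $h\restriction_{\Z}$ is injective and the $s_{n}$ are distinct, the $b_{n}$ are pairwise distinct, and of course $b_{n}\in D$. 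The reason for passing to a convergent subsequence is that $h\left(b_{m}-b_{n}\right)=s_{m}-s_{n}\to0$ as $m,n\to\infty$, so that for each $k\geq1$ we have $b_{m}-b_{n}\in h^{-1}\left(\left[-\tfrac{1}{k},\tfrac{1}{k}\right]\right)$ for all sufficiently large $m<n$.

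Next I would extract, inside the monster $G$, a $\Z$-indiscernible sequence $\left(\epsilon_{i}\right)_{i<\omega}$ that is finitely satisfiable in every tail $\left\{ b_{n}\,:\,n\geq N\right\}$ of $\left(b_{n}\right)_{n<\omega}$ — this is exactly what the usual Ramsey extraction produces. Since the $b_{n}$ are pairwise distinct, $\epsilon_{0}\neq\epsilon_{1}$, so $\epsilon:=\epsilon_{1}-\epsilon_{0}\neq0$. I claim moreover that $\epsilon\in\ker h$. Recall that $\ker h=h^{-1}\left(\left\{ 0\right\} \right)=\bigcap_{k}h^{-1}\left(\left[-\tfrac{1}{k},\tfrac{1}{k}\right]\right)$, and each $h^{-1}\left(\left[-\tfrac{1}{k},\tfrac{1}{k}\right]\right)$ is type-definable over $\Z$ by \cref{homomorphism_to_the_circle} and \cref{if_P_is_type_def_then_its_img_under_h_is_closed}. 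Fixing $k$ and writing $h^{-1}\left(\left[-\tfrac{1}{k},\tfrac{1}{k}\right]\right)$ as an intersection of sets defined by $\Z$-formulas $\theta\left(x\right)$, the $\Z$-formula $\theta\left(x_{0}-x_{1}\right)$ holds of $\left(b_{m},b_{n}\right)$ for all large $m<n$ by the previous paragraph, hence also of $\left(\epsilon_{0},\epsilon_{1}\right)$ by the choice of $\left(\epsilon_{i}\right)$ (otherwise $\neg\theta\left(x_{0}-x_{1}\right)$ is a $\Z$-formula true of $\left(\epsilon_{0},\epsilon_{1}\right)$ but false on every tail of $\left(b_{n}\right)$). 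Letting $\theta$ and then $k$ vary gives $\epsilon_{0}-\epsilon_{1}\in\ker h$, so $\epsilon\in\ker h\setminus\left\{ 0\right\}$.

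It then remains to check that $s_{n}\in h\left(\Delta_{D,\epsilon}\right)$ for every $n<\omega$. Since $\epsilon\in\ker h$ we have $h\left(b_{n}+\epsilon\right)=h\left(b_{n}\right)=s_{n}$; hence if $b_{n}+\epsilon$ were in $D$ it would lie in $D\cap h^{-1}\left(s_{n}\right)=\left\{ b_{n}\right\}$, forcing $\epsilon=0$, a contradiction. So $b_{n}+\epsilon\notin D$, while $\left(b_{n}+\epsilon\right)-\epsilon=b_{n}\in D$; therefore $b_{n}+\epsilon\in\left(D+\epsilon\right)\setminus D\subseteq D\bigtriangleup\left(D+\epsilon\right)\subseteq\Delta_{D,\epsilon}$, and so $s_{n}=h\left(b_{n}+\epsilon\right)\in h\left(\Delta_{D,\epsilon}\right)$. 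As the $s_{n}$ are distinct, $h\left(\Delta_{D,\epsilon}\right)$ is infinite, contradicting \cref{almost_invariance_under_certain_infinitesimal_translations}. Hence $h\left(D\right)$ has only finitely many isolated points.

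The step I expect to be the main obstacle is arranging $\epsilon\in\ker h$: to run the transfer in the second paragraph one needs the witnesses for finite satisfiability to be taken with arbitrarily large indices, since for small indices $b_{m}-b_{n}$ need not be close to $\ker h$. This is exactly why one wants a sequence finitely satisfiable in every \emph{tail} of $\left(b_{n}\right)$ (rather than one merely locally based on $\left(b_{n}\right)$), and why one first replaces the isolated points by a convergent subsequence. Everything else is bookkeeping with the identifications of \cref{homomorphism_to_the_circle}.
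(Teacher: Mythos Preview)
Your proof is correct and uses the same two ingredients as the paper (\cref{isolated_pt_of_img_of_Z_def_set_comes_frm_sngle_elmnt_of_Z} and \cref{almost_invariance_under_certain_infinitesimal_translations}), but you are working much harder than necessary to arrange $h(\epsilon)=0$. The paper simply takes \emph{any} non-constant $\Z$-indiscernible sequence $(\epsilon_i)_{i<\omega}$ and observes that, since each fiber $h^{-1}(\{s\})$ is type-definable over $\Z$ (\cref{homomorphism_to_the_circle}), indiscernibility over $\Z$ forces $h(\epsilon_i)$ to be constant in $i$, so $h(\epsilon)=0$ automatically. There is no need to pass to a convergent subsequence of isolated points, no need to extract the indiscernible from the $b_n$, and no need for the finitely-satisfiable-in-every-tail argument; the ``main obstacle'' you flag simply does not arise. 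Once $h(\epsilon)=0$ is known, the paper's endgame is identical to yours: for each isolated $s$ with $D\cap h^{-1}(s)=\{b\}$ one has $h(b+\epsilon)=s$ but $b+\epsilon\neq b$, so $b\in D\setminus(D-\epsilon)\subseteq\Delta_{D,\epsilon}$ and $s\in h(\Delta_{D,\epsilon})$, which is finite.

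What your approach buys is a self-contained verification that $h(\epsilon)=0$ without invoking the general fact that $\Z$-indiscernibles have constant $h$-value; what the paper's approach buys is a two-line proof and the realization that this fact about indiscernibles is reusable (indeed it is used again later, e.g.\ in setting up $F'$ before \cref{D_intersects_every_I_i}).
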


\begin{proof}
Let $\left(\epsilon_{i}\right)_{i<\omega}$ be indiscernible over
$\Z$, and denote $\epsilon:=\epsilon_{1}-\epsilon_{0}$. By indiscernibility,
$h\left(\epsilon_{i}\right)$ does not depend on $i$, so $h\left(\epsilon\right)=0$.
Let $s$ be an isolated point of $h\left(D\right)$. By \cref{isolated_pt_of_img_of_Z_def_set_comes_frm_sngle_elmnt_of_Z},
there is $b\in\Z$ such that $D\cap h^{-1}\left(s\right)=\left\{ b\right\} $.
So $h\left(b+\epsilon\right)=h\left(b\right)+h\left(\epsilon\right)=s$,
hence $b+\epsilon\notin D$, therefore $b\in\Delta_{D,\epsilon}$,
so $s=h\left(b\right)\in h\left(\Delta_{D,\epsilon}\right)$. By \cref{almost_invariance_under_certain_infinitesimal_translations},
$h\left(\Delta_{D,\epsilon}\right)$ is finite, so $h\left(D\right)$
has only finitely many isolated points. 
\end{proof}
\begin{notation}
For a finite set $F\subseteq\R$ denote $B_{F,r}:=\bigcup_{u\in F}B_{u,r}$.
This set is $\Z$-definable.
\end{notation}

\begin{rem}
\label{intersection_of_arbitrarily_small_nbhds_of_a_finite_set}For
every $u\in\R$, $\bigcap_{r>0}B_{u,r}=h^{-1}\left(q\left(u\right)\right)$.
More generally, for every finite $F\subseteq\R$, $\bigcap_{r>0}B_{F,r}=h^{-1}\left(q\left(F\right)\right)$. 
\end{rem}

\begin{cor}
\label{almost_invariance_under_certain_small_translations}Let
$D$ and $\epsilon$ be as in \cref{almost_invariance_under_certain_infinitesimal_translations},
and denote $F:=h\left(\Delta_{D,\epsilon}\right)$. Then for every
$0<r\in\R$ there is $d_{r}\in\Z$ such that:
\begin{enumerate}
\item $0<\i{h\left(d_{r}\right)}<r$, and
\item $h\left(\Delta_{D,d_{r}}\right)\subseteq F+q\left(\left(-r,r\right)\right)$.
\end{enumerate}
So every $a\in G$ with $h\left(a\right)\notin F+q\left(\left(-r,r\right)\right)$
satisfies $a\in D\iff a-d_{r}\in D$ and $a\in D\iff a+d_{r}\in D$. 
\end{cor}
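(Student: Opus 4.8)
The plan is to exhibit $\epsilon=\epsilon_{1}-\epsilon_{0}$ as a witness that a certain $\Z$-definable set is nonempty, and then use $\Z\prec G$ to pull an integer out of it. First I will fix notation: with $F:=h\left(\Delta_{D,\epsilon}\right)$ finite by \cref{almost_invariance_under_certain_infinitesimal_translations}, fix a finite set of representatives $F'\subseteq\R$ with $q\left(F'\right)=F$ and write $B_{F,r}$ for $B_{F',r}=\bigcup_{u\in F'}B_{u,r}$. From \cref{approximate_balls_notation} and \cref{intersection_of_arbitrarily_small_nbhds_of_a_finite_set} I record the three inclusions $\Delta_{D,\epsilon}\subseteq h^{-1}\left(q\left(F'\right)\right)=\bigcap_{\rho>0}B_{F,\rho}\subseteq B_{F,r}$, $\ h\left(B_{F,r}\right)\subseteq F+q\left(\left(-r,r\right)\right)$, and $h^{-1}\left(\left[-\frac{r}{2},\frac{r}{2}\right]\right)\subseteq B_{0,r}\subseteq h^{-1}\left(\left(-r,r\right)\right)$. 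I will also assume the $\Z$-indiscernible sequence is non-constant (harmless), so that $\epsilon\neq0$, and recall, as in the proof of \cref{only_finitely_many_isolated_points}, that $h\left(\epsilon\right)=0$.

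Fixing $0<r\in\R$, I then consider the set
\[
X_{r}:=\left\{ y\in G\,:\,y\neq0,\ y\in B_{0,r},\ \Delta_{D,y}\subseteq B_{F,r}\right\},
\]
which is definable over $\Z$ because $D$, $B_{0,r}$ and $B_{F,r}$ are. The element $\epsilon$ lies in $X_{r}$: it is nonzero by the choice of the sequence; $h\left(\epsilon\right)=0$ puts $\epsilon\in h^{-1}\left(\left[-\frac{r}{2},\frac{r}{2}\right]\right)\subseteq B_{0,r}$; and $\Delta_{D,\epsilon}\subseteq B_{F,r}$ by the first recorded inclusion. Hence $X_{r}\neq\emptyset$, so by elementarity there is $d\in X_{r}\cap\Z$. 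Since $d\neq0$ and $h\restriction_{\Z}$ is injective, $h\left(d\right)\neq0$, while $d\in B_{0,r}\subseteq h^{-1}\left(\left(-r,r\right)\right)$ forces $\i{h\left(d\right)}\in\left(-r,0\right)\cup\left(0,r\right)$; replacing $d$ by $-d$ if necessary (using $\Delta_{D,-d}=\Delta_{D,d}$ and $\i{-s}=-\i s$ from \cref{restricted_additivity_of_iota}) yields $d_{r}$ with $0<\i{h\left(d_{r}\right)}<r$, which is (1), and still $\Delta_{D,d_{r}}\subseteq B_{F,r}$. Applying $h$ and the second recorded inclusion gives $h\left(\Delta_{D,d_{r}}\right)\subseteq F+q\left(\left(-r,r\right)\right)$, which is (2). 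For the closing assertion, if $h\left(a\right)\notin F+q\left(\left(-r,r\right)\right)$ then $a\notin h^{-1}\left(F+q\left(\left(-r,r\right)\right)\right)\supseteq\Delta_{D,d_{r}}$, so $a$ lies in neither $D\bigtriangleup\left(D+d_{r}\right)$ nor $D\bigtriangleup\left(D-d_{r}\right)$, which is exactly $a\in D\iff a-d_{r}\in D$ and $a\in D\iff a+d_{r}\in D$.

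I expect the only substantive point to be the observation that the infinitesimal translation $\epsilon$ already sits inside the $\Z$-definable set $X_{r}$, after which elementarity ($\Z\prec G$) does all the work; everything else is careful bookkeeping with the approximate balls $B_{u,r}$ and with the identification of real intervals with their images in $\R\big/\Z$. The one thing to watch is that $\epsilon$ must be nonzero to witness the clause $y\neq0$ of $X_{r}$ — hence the harmless assumption that the sequence is non-constant — and correspondingly one must check that $0\in\left[-\frac{r}{2},\frac{r}{2}\right]$ is what places $\epsilon$ in $B_{0,r}$, and that $h\left(d\right)\neq0$ for the resulting integer, so that after the sign adjustment $\i{h\left(d_{r}\right)}$ is strictly positive.
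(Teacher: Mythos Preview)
Your proof is correct and follows essentially the same approach as the paper: define the $\Z$-definable set of translations $y$ with $y\in B_{0,r}$ and $\Delta_{D,y}\subseteq B_{F,r}$, observe that $\epsilon$ witnesses its nonemptiness, and pull back an integer by elementarity, adjusting sign at the end. The only cosmetic difference is that the paper restricts to $0<r<\tfrac{1}{2}$ (handling larger $r$ by setting $d_r:=d_{1/4}$) to avoid the edge case $\iota(h(d))=-\tfrac{1}{2}$, whereas in your argument that case is automatically excluded since $\alpha$ is irrational.
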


\begin{proof}
By \cref{almost_invariance_under_certain_infinitesimal_translations},
$F$ is finite. Let $\cE:=\left\{ b\in G\,:\,\Delta_{D,b}\subseteq h^{-1}\left(F\right)\right\} $,
and for $0<r\in\R$ let $\cE_{r}:=\left\{ b\in G\,:\,\Delta_{D,b}\subseteq B_{\i F,r}\right\} $.
Note that $\left\{ \left(a,b\right)\in G^{2}\,:\,a\in\Delta_{D,b}\right\} $
is $\Z$-definable, so $\cE_{r}$ is $\Z$-definable. By \cref{intersection_of_arbitrarily_small_nbhds_of_a_finite_set},
$\bigcap_{r>0}\cE_{r}=\cE$ and $\bigcap_{r>0}B_{0,r}=h^{-1}\left(0\right)$. 

Recall that $\epsilon=\epsilon_{1}-\epsilon_{0}$. By indiscernibility,
$h\left(\epsilon_{i}\right)$ does not depend on $i$, so $h\left(\epsilon\right)=0$.
By the definition of $F$, $\epsilon\in\cE$. So for every $0<r<\frac{1}{2}$,
$\epsilon\in\cE_{r}\cap B_{0,r}$. By elementarity, and since $\epsilon\neq0$,
there is $0\neq d_{r}\in\Z$ such that $d_{r}\in\cE_{r}\cap B_{0,r}$.

By the definition of $B_{0,r}$, and since $0<r<\frac{1}{2}$, $\i{h\left(d_{r}\right)}\in\left(-r,r\right)$.
Since $0\neq d_{r}\in\Z$, and since $\iota$ and $h\restriction_{\Z}$
are injective, $\i{h\left(d_{r}\right)}\neq0$. Note that $\Delta_{D,b}=\Delta_{D,-b}$,
so $\cE_{r}=-\cE_{r}$. Moreover, by assumption, $B_{0,r}=-B_{0,r}$.
So $-d_{r}\in\cE_{r}\cap B_{0,r}$. Since $0<r<\frac{1}{2}$, $\i{h\left(-d_{r}\right)}=-\i{h\left(d_{r}\right)}$.
Therefore, by replacing $d_{r}$ with $-d_{r}$ if necessary, we may
assume that $\i{h\left(d_{r}\right)}>0$, so $0<\i{h\left(d_{r}\right)}<r$.

By the definition of $\cE_{r}$, $\Delta_{D,d_{r}}\subseteq B_{\i F,r}\subseteq h^{-1}\left(\bigcup_{u\in\i F}q\left(\left(u-r,u+r\right)\right)\right)$,
so $h\left(\Delta_{D,d_{r}}\right)\subseteq\bigcup_{u\in\i F}q\left(\left(u-r,u+r\right)\right)$.
Since $q\left(\left(u-r,u+r\right)\right)=q\left(\left(-r,r\right)+u\right)=q\left(\left(-r,r\right)\right)+q\left(u\right)$,
we get $h\left(\Delta_{D,d_{r}}\right)\subseteq q\left(\i F\right)+q\left(\left(-r,r\right)\right)=F+q\left(\left(-r,r\right)\right)$.

For $r\ge\frac{1}{2}$, let $d_{r}:=d_{\frac{1}{4}}$.
\end{proof}
Let $D^{\prime}$ be a $\Z$-definable subset of $G$ such that $h^{-1}\left(\left[-\frac{1}{16},\frac{1}{16}\right]\right)\subseteq D^{\prime}\subseteq h^{-1}\left(\left(-\frac{1}{8},\frac{1}{8}\right)\right)$
(e.g., $D^{\prime}:=B_{0,\frac{1}{8}}$). By \cref{only_finitely_many_isolated_points}
$h\left(D^{\prime}\right)$ has only finitely many isolated points.
By \cref{isolated_pt_of_img_of_Z_def_set_comes_frm_sngle_elmnt_of_Z},
for each isolated point $s$ of $h\left(D^{\prime}\right)$ there
is $b\in\Z$ such that $D^{\prime}\cap h^{-1}\left(s\right)=\left\{ b\right\} $.
By throwing away all these $b$'s we may assume that $h\left(D^{\prime}\right)$
has no isolated points.

Let $\left(\epsilon_{i}\right)_{i<\omega}$ be indiscernible over
$\Z$, and denote $\epsilon:=\epsilon_{1}-\epsilon_{0}$. By indiscernibility,
$h\left(\epsilon_{i}\right)$ does not depend on $i$, so $h\left(\epsilon\right)=0$.
By \cref{almost_invariance_under_certain_infinitesimal_translations},
$F^{\prime}:=h\left(\Delta_{D^{\prime},\epsilon}\right)$ is finite.
We note that $F^{\prime}\subseteq h\left(D^{\prime}\right)$: if $b\in\Delta_{D^{\prime},\epsilon}$,
then at least one of $b,b+\epsilon,b-\epsilon$ is in $D^{\prime}$,
but $h\left(b+\epsilon\right)=h\left(b-\epsilon\right)=h\left(b\right)$,
so $h\left(b\right)\in D^{\prime}$. So $\i{F^{\prime}}\subseteq\i{h\left(D^{\prime}\right)}\subseteq\left(-\frac{1}{8},\frac{1}{8}\right)$.
Denote $\i{F^{\prime}}=\left\{ w_{0},\dots,w_{L^{\prime}}\right\} $
such that $-\frac{1}{8}<w_{0}<\dots<w_{L^{\prime}}<\frac{1}{8}$.

Let $\left\{ d_{r}^{\prime}\,:\,0<r\in\R\right\} $ be as in \cref{almost_invariance_under_certain_small_translations}
for $D^{\prime}$, $\epsilon$. By \cref{if_P_is_type_def_then_its_img_under_h_is_closed},
$h\left(D^{\prime}\right)$ is closed, so $C:=\i{h\left(D^{\prime}\right)}$
is closed as well. Let $s^{\prime}:=\min\left(C\right)$ and $t^{\prime}:=\max\left(C\right)$.
\begin{prop}
$s^{\prime}=w_{0}$, $t^{\prime}=w_{L^{\prime}}$, and $s^{\prime}\neq t^{\prime}$,
so $L^{\prime}\ge1$.
\end{prop}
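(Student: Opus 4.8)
The plan is to prove the three assertions separately. For $s'=w_0$ and $t'=w_{L'}$ I would argue by contradiction, exploiting that the translation $d'_r$ from \cref{almost_invariance_under_certain_small_translations} does not change membership in $D'$ outside a small neighbourhood of $F'$, while the extreme points $s',t'$ of $C$ are membership-boundary points of $D'$: pushing the corresponding fibre of $h$ slightly outward leaves $h(D')$. For $s'\neq t'$ I would use that $h(D')$ is nonempty and (by the reduction made just before the proposition) has no isolated points, hence is infinite.

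First, $s'=w_0$. Since $w_0\in\i{F'}\subseteq C$ we have $s'\le w_0$; suppose for contradiction that $s'<w_0$. As $q(s')\in h(D')$, pick $c\in D'$ with $\i{h(c)}=s'$, and fix $r$ with $0<r<\min\{w_0-s',\tfrac18\}$. Because $s'<w_0\le w_i$ for every $i$, we have $|s'-w_i|\ge w_0-s'>r$, and since $s'$ and each interval $(w_i-r,w_i+r)$ lie inside $(-\tfrac12,\tfrac12)$, on which $q$ is injective, this gives $h(c)=q(s')\notin\bigcup_i q\big((w_i-r,w_i+r)\big)=F'+q\big((-r,r)\big)$. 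On the other hand $h(c-d'_r)=h(c)-h(d'_r)=q\big(s'-\i{h(d'_r)}\big)$, and since $0<\i{h(d'_r)}<r<\tfrac18$ the real number $s'-\i{h(d'_r)}$ lies in $(-\tfrac12,\tfrac12)$ and is strictly below $s'=\min C$; as $h(D')=q(C)$ and $q$ is injective on $(-\tfrac12,\tfrac12)$, this forces $h(c-d'_r)\notin h(D')$, i.e.\ $c-d'_r\notin D'$. But $h(c)\notin F'+q((-r,r))$ together with \cref{almost_invariance_under_certain_small_translations} gives $c\in D'\iff c-d'_r\in D'$, contradicting $c\in D'$ and $c-d'_r\notin D'$. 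Hence $s'=w_0$. The proof that $t'=w_{L'}$ is entirely symmetric: assuming $w_{L'}<t'$, pick $c\in D'$ with $\i{h(c)}=t'$ and $r$ with $0<r<\min\{t'-w_{L'},\tfrac18\}$; then $h(c)=q(t')\notin F'+q((-r,r))$ since $|t'-w_i|\ge t'-w_{L'}>r$ for all $i$, while $h(c+d'_r)=q\big(t'+\i{h(d'_r)}\big)$ with $t'+\i{h(d'_r)}\in(t',t'+r)\subseteq(-\tfrac12,\tfrac12)$ lying strictly above $\max C$, so $c+d'_r\notin D'$; applying \cref{almost_invariance_under_certain_small_translations} again yields a contradiction.

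Finally, $h(D')$ is nonempty (since $C$ is) and has no isolated points; a nonempty subset of $\R\big/\Z$ with no isolated points is infinite, so $C=\i{h(D')}$ is infinite and in particular $s'=\min C<\max C=t'$. Combined with $s'=w_0$, $t'=w_{L'}$ and $-\tfrac18<w_0<\dots<w_{L'}<\tfrac18$, this forces $w_0<w_{L'}$, hence $L'\ge1$.

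I expect the only delicate point to be the bookkeeping around the quotient map $q:\R\to\R\big/\Z$ — namely ensuring that translating the extreme fibre of $h$ by the small positive amount $\i{h(d'_r)}$ genuinely lands outside $h(D')$ rather than wrapping around the circle, and that $q(s'),q(t')$ really avoid $F'+q((-r,r))$. All of this is handled by keeping $r<\tfrac18$ and below the gap from $s'$ (resp.\ $t'$) to $\i{F'}$, which confines every real number in sight to $(-\tfrac12,\tfrac12)$, where $q$ is injective; the conceptual content is simply that $s'$ and $t'$ are membership-boundary points of $D'$ and therefore must lie in $F'$.
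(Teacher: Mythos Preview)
Your proof is correct and follows essentially the same argument as the paper's: both use the small translations $d'_r$ from \cref{almost_invariance_under_certain_small_translations} to push the extremal fibre of $h$ outside $h(D')$ while staying away from $F'+q((-r,r))$, yielding a contradiction with minimality (resp.\ maximality), and both invoke the absence of isolated points for $s'\neq t'$. The only cosmetic differences are the order in which you derive the contradiction (you first show $c-d'_r\notin D'$ and then invoke the corollary, whereas the paper first invokes the corollary to get $b-d'_r\in D'$ and then contradicts minimality) and the slightly tighter bound $r<\tfrac18$ in place of the paper's $r<\tfrac14$.
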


\begin{proof}
Since $\i{F^{\prime}}\subseteq C$, we have $s^{\prime}\le w_{0}\le w_{L^{\prime}}\leq t^{\prime}$.
Suppose towards a contradiction that $s^{\prime}<w_{0}$, and let
$0<r<\frac{1}{4}$ be such that $s^{\prime}<w_{0}-r$. Since $s^{\prime}\in\i{h\left(D^{\prime}\right)}$,
there is $b\in D^{\prime}$ such that $s^{\prime}=\i{h\left(b\right)}$.
So $\i{h\left(b\right)}\notin\i{F^{\prime}}+\left(-r,r\right)$. Since
$F^{\prime}\subseteq q\left(\left(-\frac{1}{4},\frac{1}{4}\right)\right)$
and $0<r<\frac{1}{4}$, we get $h\left(b\right)\notin F^{\prime}+q\left(\left(-r,r\right)\right)$.
By the choice of $d_{r}^{\prime}$, $b\in D^{\prime}\iff b-d_{r}^{\prime}\in D^{\prime}$,
therefore $b-d_{r}^{\prime}\in D^{\prime}$. Since $h\left(b\right),h\left(d_{r}^{\prime}\right)\in q\left(\left(-\frac{1}{4},\frac{1}{4}\right)\right)$,
we get $s^{\prime}-\i{h\left(d_{r}^{\prime}\right)}=\i{h\left(b-d_{r}^{\prime}\right)}\in\i{h\left(D^{\prime}\right)}$.
Since $\i{h\left(d_{r}^{\prime}\right)}>0$, we get $s^{\prime}-\i{h\left(d_{r}^{\prime}\right)}<s^{\prime}$,
a contradiction to the definition of $s^{\prime}$. So $s^{\prime}=w_{0}$,
and the proof that $t^{\prime}=w_{L^{\prime}}$ is analogous. Finally,
if $s^{\prime}=t^{\prime}$ then $h\left(D^{\prime}\right)=\left\{ q\left(s^{\prime}\right)\right\} $,
contradicting the fact that $h\left(D^{\prime}\right)$ has no isolated
points. So $s^{\prime}\neq t^{\prime}$, and therefore $L^{\prime}\ge1$.
\end{proof}
For $0\le i\le L^{\prime}-1$, let $I_{i}:=\left(w_{i},w_{i+1}\right)$
and $\tilde{I}_{i}:=h^{-1}\left(q\left(I_{i}\right)\right)$. If for
all $0\le i\le L^{\prime}-1$ we have $D^{\prime}\cap\tilde{I}_{i}\neq\emptyset$,
let $D:=D^{\prime}$, $F:=F^{\prime}$, $L:=L^{\prime}$, $s:=s^{\prime}$,
$t:=t^{\prime}$. Otherwise, let $L:=\min\left\{ 0\le i\le L^{\prime}-1\,:\,D^{\prime}\cap\tilde{I}_{i}=\emptyset\right\} $.
If $L=0$, then $q\left(w_{0}\right)=q\left(s^{\prime}\right)$ is
an isolated point of $h\left(D^{\prime}\right)$, a contradiction.
So $L\ge1$. Let $B:=B_{-\frac{1}{8},s^{\prime},w_{L},w_{L+1}}$,
and let $D:=D^{\prime}\cap B$. So $D$ is $\Z$-definable. Let $F:=h\left(\Delta_{D,\epsilon}\right)$,
$s:=\min\left(\i{h\left(D\right)}\right)$ and $t:=\max\left(\i{h\left(D\right)}\right)$.
\begin{prop}
\label{D_intersects_every_I_i}We have $F\subseteq h\left(D\right)$,
$s=w_{0}$, $t=w_{L}$, and $\i F=\left\{ w_{0},\dots,w_{L}\right\} $.
Moreover, for all $0\le i\le L-1$ we have $D\cap\tilde{I}_{i}\neq\emptyset$.
\end{prop}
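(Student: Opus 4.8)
The plan is to treat separately the two cases appearing in the definition of $D$. In the first case, where $D:=D^{\prime}$ (and correspondingly $F:=F^{\prime}$, $L:=L^{\prime}$, $s:=s^{\prime}$, $t:=t^{\prime}$), all the assertions are immediate: we have already observed $F^{\prime}\subseteq h\left(D^{\prime}\right)$, proved $s^{\prime}=w_{0}$ and $t^{\prime}=w_{L^{\prime}}$, and $\i{F^{\prime}}=\left\{ w_{0},\dots,w_{L^{\prime}}\right\} $ holds by the definition of the $w_{i}$, while ``$D^{\prime}\cap\tilde{I}_{i}\neq\emptyset$ for all $0\le i\le L^{\prime}-1$'' is precisely the hypothesis of this case. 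Note moreover that the inclusion $F\subseteq h\left(D\right)$ holds in \emph{both} cases for the same reason it holds for $D^{\prime}$: if $a\in\Delta_{D,\epsilon}$ then one of $a,a+\epsilon,a-\epsilon$ lies in $D$, and since $h\left(\epsilon\right)=0$ all three have the same image $h\left(a\right)$ under $h$, so $h\left(a\right)\in h\left(D\right)$. So from now on I assume we are in the second case.

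In that case $D=D^{\prime}\cap B$ with $B=B_{-\frac{1}{8},s^{\prime},w_{L},w_{L+1}}$, so \cref{approximate_balls_notation} gives $h^{-1}\left(\left[w_{0},w_{L}\right]\right)\subseteq B\subseteq h^{-1}\left(\left(-\frac{1}{8},w_{L+1}\right)\right)$ (using $s^{\prime}=w_{0}$). For the ``moreover'' clause, for each $0\le i\le L-1$ the interval $I_{i}=\left(w_{i},w_{i+1}\right)$ lies in $\left[w_{0},w_{L}\right]$, so $\tilde{I}_{i}\subseteq h^{-1}\left(\left[w_{0},w_{L}\right]\right)\subseteq B$, whence $D\cap\tilde{I}_{i}=D^{\prime}\cap\tilde{I}_{i}$, which is nonempty by the choice of $L$. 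Next I would pin down $\i{h\left(D\right)}$: writing $C:=\i{h\left(D^{\prime}\right)}$ we have $C\subseteq\left[w_{0},w_{L^{\prime}}\right]$, and the choice of $L$ gives $D^{\prime}\cap\tilde{I}_{L}=\emptyset$, hence $C\cap\left(w_{L},w_{L+1}\right)=\emptyset$; combined with $D\subseteq D^{\prime}\cap h^{-1}\left(\left(-\frac{1}{8},w_{L+1}\right)\right)$ this forces $\i{h\left(D\right)}\subseteq C\cap\left[w_{0},w_{L}\right]$, while $h^{-1}\left(\left[w_{0},w_{L}\right]\right)\subseteq B$ gives $D\supseteq D^{\prime}\cap h^{-1}\left(\left[w_{0},w_{L}\right]\right)$ and so $\i{h\left(D\right)}\supseteq C\cap\left[w_{0},w_{L}\right]$. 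Since $w_{0}=\min C$ and $w_{L}\in\i{F^{\prime}}\subseteq C$, this gives $s=w_{0}$ and $t=w_{L}$.

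It then remains to compute $\i F$, and for this I would establish the identity $\Delta_{D,\epsilon}=\Delta_{D^{\prime},\epsilon}\cap h^{-1}\left(\left[w_{0},w_{L}\right]\right)$. Indeed, if $a\in\Delta_{D,\epsilon}$ then $h\left(a\right)\in h\left(D\right)$ as above, hence $\i{h\left(a\right)}\in\left[w_{0},w_{L}\right]$ by the previous step; conversely, for any $a$ with $h\left(a\right)\in\left[w_{0},w_{L}\right]$ each of $a,a+\epsilon,a-\epsilon$ lies in $h^{-1}\left(\left[w_{0},w_{L}\right]\right)\subseteq B$ (again using $h\left(\epsilon\right)=0$), so each lies in $D=D^{\prime}\cap B$ if and only if it lies in $D^{\prime}$, and therefore $a\in\Delta_{D,\epsilon}\iff a\in\Delta_{D^{\prime},\epsilon}$. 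Applying $h$ and using that $h^{-1}\left(\left[w_{0},w_{L}\right]\right)$ is a full preimage, $F=F^{\prime}\cap\left[w_{0},w_{L}\right]$, so by injectivity of $\iota$ we get $\i F=\i{F^{\prime}}\cap\left[w_{0},w_{L}\right]=\left\{ w_{0},\dots,w_{L}\right\} $, as required.

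I expect the main obstacle to be bookkeeping rather than anything conceptual: one must track carefully the identification of intervals of $\R$ with their images in $\R\big/\Z$ under $q$ (legitimate here since all the intervals in play lie inside $\left(-\frac{1}{2},\frac{1}{2}\right)$), and verify that the gap of $h\left(D^{\prime}\right)$ over $q\left(I_{L}\right)$, together with $\min C=w_{0}$, really confines $\i{h\left(D\right)}$ — and hence $\i{h\left(\Delta_{D,\epsilon}\right)}$ — to $\left[w_{0},w_{L}\right]$, in particular keeping it out of the small arc $q\left(\left(-\frac{1}{8},w_{0}\right)\right)$, where one invokes that $h\left(D^{\prime}\right)\cap q\left(\left(-\frac{1}{8},w_{0}\right)\right)=\emptyset$ because $\min C=w_{0}$.
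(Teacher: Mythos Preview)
Your proof is correct and follows essentially the same approach as the paper. The only organizational difference is that the paper first isolates the single identity $D=D^{\prime}\cap\tilde{J}$ with $\tilde{J}:=h^{-1}\left(\left[s^{\prime},w_{L}\right]\right)$ and then derives everything (the values of $s,t$, the identity $\Delta_{D,\epsilon}=\Delta_{D^{\prime},\epsilon}\cap\tilde{J}$, and the nonemptiness of $D\cap\tilde{I}_{i}$) from it by purely algebraic manipulations using $\tilde{J}+\epsilon=\tilde{J}$, whereas you establish the same facts piecemeal; the content is identical.
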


\begin{proof}
If for all $0\le i\le L^{\prime}-1$ we had $D^{\prime}\cap\tilde{I}_{i}\neq\emptyset$,
this is clear, so suppose otherwise. The proof that $F\subseteq h\left(D\right)$
is the same as for $F^{\prime}\subseteq h\left(D^{\prime}\right)$.
Denote $J:=\left[s^{\prime},w_{L}\right]$ and $\tilde{J}:=h^{-1}\left(q\left(J\right)\right)$.
By the definition of $B$, $\tilde{J}\subseteq B\subseteq h^{-1}\left(\left(-\frac{1}{8},w_{L+1}\right)\right)$,
so by the choice of $L$ and the definition of $s^{\prime}$ we have
$D=D^{\prime}\cap\tilde{J}$. In particular, we have $s=w_{0}$, $t=w_{L}$.

Since $h\left(\epsilon\right)=0$, we have $\tilde{J}+\epsilon=\tilde{J}$,
so $D+\epsilon=\left(D^{\prime}+\epsilon\right)\cap\left(\tilde{J}+\epsilon\right)=\left(D^{\prime}+\epsilon\right)\cap\tilde{J}$.
Therefore $D\backslash\left(D+\epsilon\right)=\left(D^{\prime}\cap\tilde{J}\right)\backslash\left(\left(D^{\prime}+\epsilon\right)\cap\tilde{J}\right)=\left(D^{\prime}\backslash\left(D^{\prime}+\epsilon\right)\right)\cap\tilde{J}$,
and similarly, $\left(D+\epsilon\right)\backslash D=\left(\left(D^{\prime}+\epsilon\right)\backslash D^{\prime}\right)\cap\tilde{J}$.
So $D\Delta\left(D+\epsilon\right)=\left(D^{\prime}\Delta\left(D^{\prime}+\epsilon\right)\right)\cap\tilde{J}$,
and similarly we get $D\Delta\left(D-\epsilon\right)=\left(D^{\prime}\Delta\left(D^{\prime}-\epsilon\right)\right)\cap\tilde{J}$.
Therefore $\Delta_{D,\epsilon}=\Delta_{D^{\prime},\epsilon}\cap\tilde{J}$.
By the definition of $\tilde{J}$ we get $\i F=\i{h\left(\Delta_{D,\epsilon}\right)}=\i{h\left(\Delta_{D^{\prime},\epsilon}\right)\cap q\left(J\right)}=\i{h\left(\Delta_{D^{\prime},\epsilon}\right)}\cap J=\i{F^{\prime}}\cap J=\left\{ w_{0},\dots,w_{L}\right\} $.

Finally, for each $0\le i\le L-1$ we have $\tilde{I}_{i}\subseteq\tilde{J}$,
so $D\cap\tilde{I}_{i}=D^{\prime}\cap\tilde{J}\cap\tilde{I}_{i}=D^{\prime}\cap\tilde{I}_{i}\neq\emptyset$.
\end{proof}
Let $\left\{ d_{r}\,:\,0<r\in\R\right\} $ be as in \cref{almost_invariance_under_certain_small_translations}
for $D$, $\epsilon$.
\begin{prop}
\label{every_element_in_img_of_D_cap_I_is_an_accumulation_pt}For
all $0\le i\le L-1$ and $b\in D\cap\tilde{I}_{i}$, $\i{h\left(b\right)}$
is an accumulation point of $\i{h\left(D\right)}$. 
\end{prop}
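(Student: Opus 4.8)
The plan is to deduce this from the near-translation-invariance of $D$ supplied by \cref{almost_invariance_under_certain_small_translations}: I will show that, for every sufficiently small $0<r\in\R$, there is a point of $\i{h\left(D\right)}$ at distance less than $r$ from $\i{h\left(b\right)}$ and distinct from it, which is exactly what it means for $\i{h\left(b\right)}$ to be an accumulation point of $\i{h\left(D\right)}$. First I would record that $\i{h\left(b\right)}\in I_{i}=\left(w_{i},w_{i+1}\right)$ and that by \cref{D_intersects_every_I_i} we have $\i F=\left\{ w_{0},\dots,w_{L}\right\} $; since the $w_{j}$ are strictly increasing, the open interval $I_{i}$ is disjoint from $\i F$, so $\delta:=\min_{j}\left\vert \i{h\left(b\right)}-w_{j}\right\vert >0$.

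Next, I would fix an arbitrary $r$ with $0<r<\min\left(\delta,\frac{1}{8}\right)$. Since $\i F$ and $\i{h\left(b\right)}$ lie in $\left(-\frac{1}{8},\frac{1}{8}\right)$, all the relevant representatives stay in $\left(-\frac{1}{4},\frac{1}{4}\right)$, and unwinding the quotient map $q$ shows that $h\left(b\right)\in F+q\left(\left(-r,r\right)\right)$ would force $\left\vert \i{h\left(b\right)}-w_{j}\right\vert <r$ for some $j$, contradicting $r<\delta$; hence $h\left(b\right)\notin F+q\left(\left(-r,r\right)\right)$. Applying \cref{almost_invariance_under_certain_small_translations} to $D$ and $\epsilon$ (which is legitimate, as $\left\{ d_{r}\right\} $ was chosen for exactly this pair), there is $d_{r}\in\Z$ with $0<\i{h\left(d_{r}\right)}<r$ and $h\left(\Delta_{D,d_{r}}\right)\subseteq F+q\left(\left(-r,r\right)\right)$. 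Since $b\in D$ and $h\left(b\right)\notin F+q\left(\left(-r,r\right)\right)$, the last clause of that corollary yields $b+d_{r}\in D$.

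Finally, I would invoke the restricted additivity of $\iota$ (\cref{restricted_additivity_of_iota}): because $\i{h\left(b\right)}\in\left(-\frac{1}{8},\frac{1}{8}\right)$ and $\i{h\left(d_{r}\right)}\in\left(0,r\right)$ with $r<\frac{1}{8}$, the sum $\i{h\left(b\right)}+\i{h\left(d_{r}\right)}$ lies in $\left[-\frac{1}{2},\frac{1}{2}\right)$, so $\i{h\left(b+d_{r}\right)}=\i{h\left(b\right)}+\i{h\left(d_{r}\right)}$. Thus $\i{h\left(b\right)}+\i{h\left(d_{r}\right)}$ is a point of $\i{h\left(D\right)}$ which is distinct from $\i{h\left(b\right)}$ (since $\i{h\left(d_{r}\right)}>0$) and within distance $r$ of it. As $r$ ranges over all sufficiently small positive reals, this exhibits $\i{h\left(b\right)}$ as an accumulation point of $\i{h\left(D\right)}$.

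I do not expect a genuine obstacle here; the content is essentially a bookkeeping application of \cref{almost_invariance_under_certain_small_translations}. The only place requiring a little care is the choice of $r$: it must be small enough that $h\left(b\right)$ avoids the ``forbidden'' neighbourhood $F+q\left(\left(-r,r\right)\right)$ of the exceptional set (handled by $r<\delta$) and small enough that the additivity of $\iota$ can be applied without wrap-around in $\R\big/\Z$ (handled by $r<\frac{1}{8}$); note that one nearby point per $r$ already suffices, so using $b+d_{r}$ alone is enough, although $b-d_{r}$ would serve equally well.
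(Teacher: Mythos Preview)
Your proof is correct and follows essentially the same approach as the paper: both arguments use the elements $d_{r}$ from \cref{almost_invariance_under_certain_small_translations} to produce $b+d_{r}\in D$ with $\i{h\left(b+d_{r}\right)}=\i{h\left(b\right)}+\i{h\left(d_{r}\right)}$ close to but distinct from $\i{h\left(b\right)}$. The only cosmetic difference is that the paper, given an arbitrary $r$, first passes to a smaller $\rho<r$ with $\i{h\left(b\right)}\in\left(w_{i}+\rho,w_{i+1}-\rho\right)$ and uses $d_{\rho}$, whereas you compute $\delta$ upfront and restrict $r<\min\left(\delta,\frac{1}{8}\right)$ directly; the content is identical.
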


\begin{proof}
Let $0<r<\frac{1}{4}$. Let $0<\rho<r$ be such that $\i{h\left(b\right)}\in\left(w_{i}+\rho,w_{i+1}-\rho\right)$.
So $\i{h\left(b\right)}\notin\i F+\left(-\rho,\rho\right)$, hence
$h\left(b\right)\notin F+q\left(\left(-\rho,\rho\right)\right)$,
and therefore $b\in D\iff b+d_{\rho}\in D$. So $b+d_{\rho}\in D$,
hence $\i{h\left(b+d_{\rho}\right)}\in\i{h\left(D\right)}$. Since
$\i{h\left(d_{\rho}\right)}\in\left(0,\rho\right)$, we get $\i{h\left(b+d_{\rho}\right)}=\i{h\left(b\right)}+\i{h\left(d_{\rho}\right)}\in\left(\i{h\left(b\right)},\i{h\left(b\right)}+\rho\right)\subseteq\left(\i{h\left(b\right)},\i{h\left(b\right)}+r\right)$.
\end{proof}
For each $0\le i\le L-1$, $\tilde{I}_{i}$ is $\bigvee$-definable
over $\Z$, so $D\cap\tilde{I}_{i}$ is $\bigvee$-definable over
$\Z$. By \cref{D_intersects_every_I_i}, $D\cap\tilde{I}_{i}\neq\emptyset$,
so $D\cap\tilde{I}_{i}\cap\Z\neq\emptyset$ as well. Fix $b_{i}\in D\cap\tilde{I}_{i}\cap\Z$,
and let $u_{i}:=\i{h\left(b_{i}\right)}$. Let $0<\rho\in\R$ be such
that for all $0\le i\le L-1$, $u_{i}\in\left(w_{i}+4\rho,w_{i+1}-4\rho\right)$,
and denote $J_{i}:=\left(u_{i}-\rho,u_{i}+\rho\right)$, $\tilde{J}_{i}:=h^{-1}\left(q\left(J_{i}\right)\right)$.
\begin{prop}
For each $0\le i\le L-1$ there is a finite set $\Gamma_{i}\subseteq h^{-1}\left(0\right)$
such that $\tilde{J}_{i}\subseteq D+\Gamma_{i}$.
\end{prop}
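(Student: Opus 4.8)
The plan is to first cover $\tilde{J}_{i}$ by finitely many translates $D+\lambda$ with $\lambda\in\Z$, and then, for each such $\lambda$, to replace $D+\lambda$ — on $\tilde{J}_{i}$ — by a translate $D+\gamma$ with $\gamma\in h^{-1}(0)$. This replacement works because $h(\lambda)$ is forced to be close to $0$ and because $D$ is genuinely translation-invariant by the tiny elements $d_{r}$ of \cref{almost_invariance_under_certain_small_translations} on the part of $G$ lying over the interior of $(w_{i},w_{i+1})$ — which is where everything happens, since $\overline{J_{i}}=[u_{i}-\rho,u_{i}+\rho]\subseteq(w_{i}+3\rho,w_{i+1}-3\rho)$.

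For the first step: $b_{i}\in D\cap\tilde{I}_{i}\cap\Z$ with $\i{h(b_{i})}=u_{i}$, so by \cref{every_element_in_img_of_D_cap_I_is_an_accumulation_pt} the point $u_{i}$ is an accumulation point of $\i{h(D)}$, hence $0$ is an accumulation point of $h(D-b_{i})$. Applying \cref{local_genericity_around_an_accumulation_point} to the $\Z$-definable set $D-b_{i}$ with $d=2\rho$ and $p=0$ produces a finite $\Lambda\subseteq\Z$ with $h(\Lambda)\subseteq q((-2\rho,2\rho))$ and $h^{-1}(q((-\rho,\rho)))\subseteq(D-b_{i})+\Lambda$. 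Since $\tilde{J}_{i}=b_{i}+h^{-1}(q((-\rho,\rho)))$, this gives $\tilde{J}_{i}\subseteq D+\Lambda$.

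For the second step, fix $\lambda\in\Lambda$ and put $v:=\i{h(\lambda)}\in(-2\rho,2\rho)$. Fix (as in \cref{approximate_balls_notation}) a $\Z$-definable set $B_{\lambda}$ with $h^{-1}(q([w_{i}+3\rho/4,w_{i+1}-3\rho/4]))\subseteq B_{\lambda}\subseteq h^{-1}(q((w_{i}+\rho/2,w_{i+1}-\rho/2)))$ (legitimate since $w_{i+1}-w_{i}>8\rho$), and consider the $\Z$-definable set $\mathrm{St}_{\lambda}:=\{z\in G:(D\cap B_{\lambda})+z\subseteq D\}$. Using that each $d_{r}$ of \cref{almost_invariance_under_certain_small_translations} (for $D,\epsilon$, with $0<r<\rho/4$) satisfies $0<\i{h(d_{r})}<r$ and fixes $D$-membership of every point lying over a region at distance $>r$ from all the $w_{j}$, one checks that $\pm kd_{r}\in\mathrm{St}_{\lambda}$ whenever $(k-1)\i{h(d_{r})}<\rho/2-r$; letting $r\to0$ we see that $h(\mathrm{St}_{\lambda})$ is dense in $q((-\rho/2,\rho/2))$, and since $\mathrm{St}_{\lambda}$ is $\Z$-definable its $h$-image is closed by \cref{if_P_is_type_def_then_its_img_under_h_is_closed}, so $q([-\rho/2,\rho/2])\subseteq h(\mathrm{St}_{\lambda})$. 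Now write $v=\tau_{1}+\dots+\tau_{m}$ with $m\le4$ and each $\tau_{l}\in[-\rho/2,\rho/2]$ of the same sign as $v$ (so all partial sums have modulus $\le|v|<2\rho$), pick $g_{l}\in\mathrm{St}_{\lambda}$ with $h(g_{l})=q(\tau_{l})$, and set $g:=g_{1}+\dots+g_{m}$; a straightforward induction — at each stage the set in play lies over $\overline{J_{i}}$ shifted by something in $(-2\rho,2\rho)$, hence over a subinterval of $(w_{i},w_{i+1})$ contained in $B_{\lambda}$ — gives $(D\cap h^{-1}(q(\overline{J_{i}}-v)))+g\subseteq D$, with $h(g)=q(v)=h(\lambda)$. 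Put $\gamma_{\lambda}:=\lambda-g\in h^{-1}(0)$. If $a\in(D+\lambda)\cap\tilde{J}_{i}$, then $a-\lambda\in D$ and $\i{h(a-\lambda)}=\i{h(a)}-v\in\overline{J_{i}}-v$, so $a-\lambda\in D\cap h^{-1}(q(\overline{J_{i}}-v))$, whence $a-\gamma_{\lambda}=(a-\lambda)+g\in D$; thus $(D+\lambda)\cap\tilde{J}_{i}\subseteq D+\gamma_{\lambda}$. Taking $\Gamma_{i}:=\{\gamma_{\lambda}:\lambda\in\Lambda\}$, we conclude $\tilde{J}_{i}=\bigcup_{\lambda\in\Lambda}\bigl((D+\lambda)\cap\tilde{J}_{i}\bigr)\subseteq D+\Gamma_{i}$.

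The main obstacle is exactly this last replacement. One cannot argue by bare compactness, since a type-definable set may be covered by translates of a definable set with no finite subcover (e.g. $G^{00}\subseteq D+G^{00}$ whenever $D$ is definable with $D\cap G^{00}=\{0\}$, which does occur). What saves the argument is that over the interior of $(w_{i},w_{i+1})$ the set $D$ is genuinely invariant under each $d_{r}$, so the stabilizer $\mathrm{St}_{\lambda}$ is an honest definable set, whose $h$-image is therefore closed and contains an interval around $0$; the only remaining subtlety — that $h(d_{r})$ never lands exactly on the required value $v$ — is what forces $g$ to be assembled as a bounded chain $g_{1}+\dots+g_{m}$ rather than a single correction.
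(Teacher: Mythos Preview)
Your argument is correct; the first step (covering $\tilde{J}_{i}$ by finitely many integer translates of $D$ with $h$-image in $(-2\rho,2\rho)$) is essentially the paper's, but the replacement step is a genuinely different route. The paper does not introduce a stabilizer or chain translations. Instead, for each $c$ it forms the \emph{type}-definable set $P_{i,c}:=\{c'\in G:E_{i,c}\subseteq D+c'\}$ (with $E_{i,c}:=(D\cap B_{i}+c)\cap\tilde{J}_{i}$, which is $\bigvee$-definable over $\Z$), takes $\tilde{c}\in P_{i,c}\cap h^{-1}([0,v])$ realizing the minimum $\tilde{v}$ of the closed image $\i{h(P_{i,c}\cap h^{-1}([0,v]))}$, and shows $\tilde{v}=0$ by contradiction: if $\tilde{v}>0$, a single application of $d_{r}$ with $r<\min(\tilde{v},\rho)$ produces $\tilde{c}-d_{r}\in P_{i,c}$ with strictly smaller $h$-value. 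Your approach trades this one-step minimization for a construction: you arrange the stabilizer $\mathrm{St}_{\lambda}$ to be \emph{definable} (by stabilizing the definable $D\cap B_{\lambda}$ rather than the $\bigvee$-definable $E_{i,c}$), deduce that $h(\mathrm{St}_{\lambda})$ contains a whole interval about $0$, and then chain at most four elements to reach $v$. The paper's route is shorter and avoids the bookkeeping of the chain; yours yields the pleasant intermediate fact that the stabilizer surjects onto a fixed interval, and your closing paragraph correctly identifies why a naive compactness shortcut would fail.
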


\begin{proof}
Denote $J_{i}^{\prime}:=\left(u_{i}-\frac{\rho}{2},u_{i}+\frac{\rho}{2}\right)$,
$\tilde{J}_{i}^{\prime}:=h^{-1}\left(q\left(J_{i}^{\prime}\right)\right)$,
$B_{i}:=B_{u_{i},\rho}$. So $\tilde{J}_{i}^{\prime}\subseteq B_{i}\subseteq\tilde{J}_{i}$.
By \cref{every_element_in_img_of_D_cap_I_is_an_accumulation_pt},
$u_{i}$ is an accumulation point of $\i{h\left(D\right)}$, so $\i{h\left(D\cap\tilde{J}_{i}^{\prime}\right)}=\i{h\left(D\right)}\cap J_{i}^{\prime}$
is infinite. Therefore $D\cap\tilde{J}_{i}^{\prime}$ is infinite,
and since $D\cap\tilde{J}_{i}^{\prime}\subseteq D\cap B_{i}$, $D\cap B_{i}$
is infinite as well. Since $D$ and $B_{i}$ are $\Z$-definable and
$b_{i}\in\Z$, $D\cap B_{i}-b_{i}$ is $\Z$-definable. Note that
$h\left(D\cap B_{i}-b_{i}\right)=h\left(D\cap B_{i}\right)-h\left(b_{i}\right)\subseteq h\left(\tilde{J}_{i}\right)-q\left(u_{i}\right)\subseteq q\left(J_{i}\right)-q\left(u_{i}\right)=q\left(\left(-\rho,\rho\right)\right)$,
so by \cref{local_genericity} (with $r:=2\rho$ and $u:=\rho$)
there is a finite set $C_{i}\subseteq\Z$ such that $h\left(C_{i}\right)\subseteq\left[-2\rho,2\rho\right]$
and 
\[
h^{-1}\left(\left[-\rho,\rho\right]\right)\subseteq D\cap B_{i}-b_{i}+C_{i}\subseteq h^{-1}\left(\left(-3\rho,3\rho\right)\right).
\]
Therefore, 
\[
\tilde{J}_{i}\subseteq h^{-1}\left(\left[u_{i}-\rho,u_{i}+\rho\right]\right)\subseteq D\cap B_{i}+C_{i}\subseteq h^{-1}\left(\left(u_{i}-3\rho,u_{i}+3\rho\right)\right).
\]
For each $c\in C_{i}$ let $E_{i,c}:=\left(D\cap B_{i}+c\right)\cap\tilde{J}_{i}$.
So $\tilde{J}_{i}\subseteq\bigcup_{c\in C_{i}}E_{i,c}$.
\begin{claim}
For every $c\in C_{i}$ there is $\gamma_{c}\in h^{-1}\left(0\right)$
such that $E_{i,c}\subseteq D+\gamma_{c}$. 
\end{claim}

\begin{proof}
Let $c\in C_{i}$, and let $v:=\i{h\left(c\right)}$. So $v\in\left[-2\rho,2\rho\right]$.
If $v=0$ then $\gamma_{c}:=c$ is as required, so suppose $v\neq0$.
Without loss of generality, $v>0$. 

Since $\tilde{J}_{i}$ is $\bigvee$-definable over $\Z$ and $c\in\Z$,
also $E_{i,c}$ is $\bigvee$-definable over $\Z$. Write $E_{i,c}=\bigcup_{\alpha}E_{i,c,\alpha}$,
where each $E_{i,c,\alpha}$ is $\Z$-definable. Let $P_{i,c}:=\left\{ c^{\prime}\in G\,:\,E_{i,c}\subseteq D+c^{\prime}\right\} $.
Then $P_{i,c}=\bigcap_{\alpha}\left\{ c^{\prime}\in G\,:\,E_{i,c,\alpha}\subseteq D+c^{\prime}\right\} $,
so $P_{i,c}$ is type-definable over $\Z$. 

Let $P_{i,c}^{\prime}:=P_{i,c}\cap h^{-1}\left(\left[0,v\right]\right)$.
So $P_{i,c}^{\prime}$ is type-definable over $\Z$, therefore, by
\cref{if_P_is_type_def_then_its_img_under_h_is_closed}, $\i{h\left(P_{i,c}^{\prime}\right)}$
is closed. Note that $c\in P_{i,c}$ and $\i{h\left(P_{i,c}^{\prime}\right)}=\i{h\left(P_{i,c}\right)}\cap\left[0,v\right]$,
so $v=\i{h\left(c\right)}\in\i{h\left(P_{i,c}^{\prime}\right)}\neq\emptyset$.
Let $\tilde{v}:=\min\left(\i{h\left(P_{i,c}^{\prime}\right)}\right)$,
and let $\tilde{c}\in P_{i,c}^{\prime}$ be such that $\i{h\left(\tilde{c}\right)}=\tilde{v}$.
So $0\le\tilde{v}\le v$ and $E_{i,c}\subseteq D+\tilde{c}$.

Suppose towards a contradiction that $\tilde{v}>0$. Let $0<r<\min\left(\tilde{v},\rho\right)$
and denote $c^{\prime}:=\tilde{c}-d_{r}$, $v^{\prime}:=\i{h\left(c^{\prime}\right)}=\tilde{v}-\i{h\left(d_{r}\right)}$.
Since $0<\i{h\left(d_{r}\right)}<r$ we have $0<v^{\prime}<\tilde{v}$. 

We show that $E_{i,c}\subseteq D+c^{\prime}$. Let $a\in E_{i,c}$.
In particular, $a\in\tilde{J}_{i}$, so $\i{h\left(a-\tilde{c}\right)}=\i{h\left(a\right)}-\tilde{v}\in J_{i}-\tilde{v}=\left(u_{i}-\tilde{v}-\rho,u_{i}-\tilde{v}+\rho\right)$.
Since $u_{i}\in\left(w_{i}+4\rho,w_{i+1}-4\rho\right)$ and $0\le\tilde{v}\le v\le2\rho$,
we get $\i{h\left(a-\tilde{c}\right)}\in\left(w_{i}+\rho,w_{i+1}-\rho\right)\subseteq\left(w_{i}+r,w_{i+1}-r\right)$,
so $h\left(a-\tilde{c}\right)\notin F+q\left(\left(-r,r\right)\right)$.
By the choice of $d_{r}$, $a-\tilde{c}\in D\iff a-\tilde{c}+d_{r}\in D$.
Since $E_{i,c}\subseteq D+\tilde{c}$, we have $a-\tilde{c}\in D$,
so $a-\tilde{c}+d_{r}\in D$, and therefore $a\in D+\tilde{c}-d_{r}=D+c^{\prime}$.

So $c^{\prime}\in P_{i,c}^{\prime}$, hence $v^{\prime}=\i{h\left(c^{\prime}\right)}\in\i{h\left(P_{i,c}^{\prime}\right)}$,
contradicting the minimality of $\tilde{v}$. Therefore $\tilde{v}=0$,
so $\gamma_{c}:=\tilde{c}$ is as required.
\end{proof}
Let $\Gamma_{i}:=\left\{ \gamma_{c}\,:\,c\in C_{i}\right\} $. So
$\Gamma_{i}\subseteq h^{-1}\left(0\right)$ and we have
\[
\tilde{J}_{i}\subseteq\bigcup_{c\in C_{i}}E_{i,c}\subseteq\bigcup_{c\in C_{i}}\left(D+\gamma_{c}\right)=D+\Gamma_{i}
\]
\end{proof}
\begin{prop}
$\tilde{I}_{i}\subseteq D+\Gamma_{i}$.
\end{prop}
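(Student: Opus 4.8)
The plan is to reduce, for an arbitrary point of $\tilde{I}_i$, the question of membership in $D+\Gamma_i$ to the same question at a point of $\tilde{J}_i$, which is already settled by the preceding proposition, by sliding along the small translations $d_r$ furnished by \cref{almost_invariance_under_certain_small_translations}. The point to keep in mind is that $\i F=\{w_0,\dots,w_L\}$, that $I_i=(w_i,w_{i+1})$ sits strictly between two consecutive elements of $\i F$, and that $u_i\in(w_i+4\rho,w_{i+1}-4\rho)$ with $J_i=(u_i-\rho,u_i+\rho)$. The safe-window observation is this: since all the $w_m$ lie in the small interval $\left(-\tfrac18,\tfrac18\right)$, whenever $a'\in G$ satisfies $\i{h(a')}\in(w_i+r,w_{i+1}-r)$ one has, by \cref{restricted_additivity_of_iota}, that $h(a')\notin F+q\!\left((-r,r)\right)$, so $D$-membership of $a'$ is unchanged upon adding or subtracting $d_r$.

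First I would fix $a\in\tilde{I}_i$ and set $t:=\i{h(a)}\in(w_i,w_{i+1})$. If $t\in J_i$ then $a\in\tilde{J}_i$ and we are done by the preceding proposition, so assume otherwise; then $t\le u_i-\rho$ or $t\ge u_i+\rho$, and I treat the first case, the second being symmetric with subtraction of $d_r$ in place of addition. Choose $0<r<\min(\rho,\,t-w_i,\,w_{i+1}-t)$ and write $\delta:=\i{h(d_r)}\in(0,r)$. Since $\delta<\rho$ and $t\le u_i-\rho$, there is a least $k\ge1$ with $t+k\delta>u_i-\rho$, and then $t+k\delta\in(u_i-\rho,u_i)\subseteq J_i$; moreover for every $0\le j\le k$ we have $w_i+r<t\le t+j\delta\le t+k\delta<u_i<w_{i+1}-r$, so by \cref{restricted_additivity_of_iota} $\i{h(a+jd_r)}=t+j\delta$, and hence $h(a+jd_r)\notin F+q\!\left((-r,r)\right)$ by the safe-window observation.

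Now $h(a+kd_r)\in q(J_i)$, i.e.\ $a+kd_r\in\tilde{J}_i$, so by the preceding proposition there is $\gamma\in\Gamma_i$ with $a+kd_r-\gamma\in D$. Since $\Gamma_i\subseteq h^{-1}(0)$ we get $h(a+jd_r-\gamma)=h(a+jd_r)\notin F+q\!\left((-r,r)\right)$ for all $0\le j\le k$, so \cref{almost_invariance_under_certain_small_translations} gives $a+jd_r-\gamma\in D\iff a+(j+1)d_r-\gamma\in D$ for $0\le j\le k-1$; chaining these equivalences yields $a-\gamma\in D$, hence $a\in D+\Gamma_i$. As $a\in\tilde{I}_i$ was arbitrary, $\tilde{I}_i\subseteq D+\Gamma_i$. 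I do not expect a genuine obstacle: this is a routine ``sliding'' argument, and the only points requiring care are keeping the entire orbit $a,a+d_r,\dots,a+kd_r$ inside the safe window $(w_i+r,w_{i+1}-r)$ — which forces $r$ to depend on $a$ — and the bookkeeping involved in identifying arcs of $\R\big/\Z$ with real intervals via $\iota$.
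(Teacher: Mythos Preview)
Your proof is correct and uses the same underlying idea as the paper: slide by the small translations $d_r$ while staying inside the safe window $(w_i+r,w_{i+1}-r)$, using that $\Gamma_i\subseteq h^{-1}(0)$ so the translation by $\gamma$ does not move the image. The only structural difference is in packaging: the paper defines $w_i':=\min\{v\in[w_i,u_i]:h^{-1}((v,u_i])\subseteq D+\Gamma_i\}$ (and symmetrically $w_{i+1}'$), then derives a one-step contradiction from $w_i'>w_i$ by choosing $r<\min(\rho,w_i'-w_i)$ and a single $a$ with $\iota(h(a))\in(w_i'-\iota(h(d_r)),w_i']$; you instead fix $a$ first, let $r$ depend on $a$, and iterate $k$ times until you land in $J_i$. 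The paper's version avoids the explicit iteration and the choice of $k$, while your version is more constructive and sidesteps checking that the minimum defining $w_i'$ is attained; both arrive at the same place with no real difference in difficulty.
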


\begin{proof}
Let
\begin{align*}
w_{i}^{\prime}:= & \min\left\{ w_{i}\le v\le u_{i}\,:\,h^{-1}\left(\left(v,u_{i}\right]\right)\subseteq D+\Gamma_{i}\right\} \\
w_{i+1}^{\prime}:= & \max\left\{ u_{i}\le v\le w_{i+1}\,:\,h^{-1}\left(\left[u_{i},v\right)\right)\subseteq D+\Gamma_{i}\right\} 
\end{align*}
So $w_{i}^{\prime}\ge w_{i}$ and $w_{i+1}^{\prime}\le w_{i+1}$,
and since $\tilde{J}_{i}\subseteq D+\Gamma_{i}$ we have $w_{i}^{\prime}\le u_{i}-\rho$
and $w_{i+1}^{\prime}\ge u_{i}+\rho$. We claim that $w_{i}^{\prime}=w_{i}$
and $w_{i+1}^{\prime}=w_{i+1}$. We show that $w_{i}^{\prime}=w_{i}$,
and the proof that $w_{i+1}^{\prime}=w_{i+1}$ is analogous. 

Suppose towards a contradiction that $w_{i}^{\prime}>w_{i}$, and
let $0<r<\min\left(\rho,w_{i}^{\prime}-w_{i}\right)$. Since $\i{h\left(d_{r}\right)}>0$,
by the definition of $w_{i}^{\prime}$ there exists $a\in G$ such
that $w_{i}^{\prime}-\i{h\left(d_{r}\right)}<\i{h\left(a\right)}\le w_{i}^{\prime}$
and $a\notin D+\Gamma_{i}$. Since $\i{h\left(d_{r}\right)}<r$ we
get 
\begin{equation}
w_{i}^{\prime}<\i{h\left(a\right)}+\i{h\left(d_{r}\right)}=\i{h\left(a+d_{r}\right)}\le w_{i}^{\prime}+\i{h\left(d_{r}\right)}<w_{i}^{\prime}+r<w_{i}^{\prime}+\rho\le u_{i}\tag{\ensuremath{\star}}\label{bounding_a_plus_d_r}
\end{equation}
and therefore, by the definition of $w_{i}^{\prime}$, $a+d_{r}\in D+\Gamma_{i}$,
so there is $\gamma\in\Gamma_{i}$ such that $a+d_{r}-\gamma\in D$. 

Since $r<\rho\le w_{i+1}^{\prime}-u_{i}\le w_{i+1}-u_{i}$ and $r<w_{i}^{\prime}-w_{i}$,
by (\cref{bounding_a_plus_d_r}) we get $w_{i}+r<\i{h\left(a+d_{r}\right)}<w_{i+1}-r$.
Since $\Gamma_{i}\subseteq h^{-1}\left(0\right)$, $\i{h\left(a+d_{r}-\gamma\right)}=\i{h\left(a+d_{r}\right)}-\i{h\left(\gamma\right)}=\i{h\left(a+d_{r}\right)}$,
so $w_{i}+r<\i{h\left(a+d_{r}-\gamma\right)}<w_{i+1}-r$, and hence
$h\left(a+d_{r}-\gamma\right)\notin F+q\left(\left(-r,r\right)\right)$.
By the choice of $d_{r}$ we get $a+d_{r}-\gamma\in D\iff a-\gamma\in D$,
and therefore $a-\gamma\in D$. So $a\in D+\gamma\subseteq D+\Gamma_{i}$,
a contradiction.

So $w_{i}^{\prime}=w_{i}$, and similarly $w_{i+1}^{\prime}=w_{i+1}$,
therefore $\tilde{I}_{i}=h^{-1}\left(\left(w_{i},w_{i+1}\right)\right)\subseteq D+\Gamma_{i}$. 
\end{proof}
\begin{prop}
\label{externally_definable_short_interval_starting_at_0}There
exists $0<u<\frac{1}{4}$ such that $h^{-1}\left(\left(0,u\right)\right)\cap\Z$
is externally definable in $\mathcal{Z}$.
\end{prop}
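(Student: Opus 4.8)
I would prove this via the standard finite‑satisfiability criterion for external definability: a subset $S\subseteq\Z$ is externally definable in $\mathcal Z$ if and only if there is a single $L$‑formula $\phi(x;\bar y)$ such that for every finite $F\subseteq\Z$ there is $\bar b_F\in\Z$ with $\phi(a;\bar b_F)\iff a\in S$ for all $a\in F$ (this is immediate by compactness, realizing in an elementary extension the trace type $\{\phi(a;\bar y):a\in S\}\cup\{\neg\phi(a;\bar y):a\in\Z\setminus S\}$). Thus it suffices to exhibit, for a suitable small $u$, one such formula and, for each finite $F$, integer parameters so that the resulting definable set agrees with $S:=h^{-1}\big(q((0,u))\big)\cap\Z$ on $F$. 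The formula will be a fixed Boolean combination of conditions of the shape $x-y_j\in D$ and $x-y_j\in E_z$ (using the $\Z$‑definable set $D$ and the uniformly $\Z$‑definable family $\{E_r\}$ from \cref{uniformly_def_family_of_approximate_balls}); the freedom we exploit is that the parameters may be chosen depending on $F$.

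For the setup, first I would check that $w_L>0$, so that $q(0)$ lies in the interior of $h(D)$. In the case $D=D'$ this is immediate from $h^{-1}\big([-\tfrac1{16},\tfrac1{16}]\big)\subseteq D$; in the other case one argues from $h^{-1}\big([-\tfrac1{16},\tfrac1{16}]\big)\subseteq D'$ that the index $i^{*}$ with $0\in I_{i^{*}}$ (which exists since $0\notin\iota F'$) satisfies $i^{*}\le L-1$, whence $0<w_{i^{*}+1}\le w_L$. Fix this $i^{*}$ and pick a small positive $u\notin\iota h(\Z)$, smaller than $w_{i^{*}+1}$, than $\delta_0:=\min(\tfrac1{16},w_L)$, and than the other constants that will appear. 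The crucial structural facts are: $h^{-1}\big([-\tfrac1{16},\delta_0]\big)\subseteq D\subseteq h^{-1}\big(q([w_0,w_L])\big)$ — so $D$ contains a genuine interval‑preimage straddling $q(0)$, with an uncontrolled "tail" reaching out to $q(w_0)$ and $q(w_L)$ — and $\tilde I_{i^{*}}=h^{-1}\big(q(I_{i^{*}})\big)\subseteq D+\Gamma_{i^{*}}$ with $\Gamma_{i^{*}}\subseteq h^{-1}(0)$ finite, so inside the "slab" $h^{-1}\big(q(I_{i^{*}})\big)$ the $\Z$‑definable set $D+\Gamma_{i^{*}}$ already contains everything. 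Since $h{\restriction}_\Z$ is injective, membership $n\in D$ for $n\in\Z$ is controlled by $\iota h(n)$, which lets us read off "$\iota h(a)\in(0,u)$" up to a finite set of ambiguous values coming from the tail of $h(D)$ and from the two half‑intervals $[-\tfrac1{16},0]$, $[u,\delta_0]$.

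Finally, given a finite $F\subseteq\Z$, set $\Sigma:=\{\iota h(a):a\in F\}$, a finite set of distinct reals. Using density of $\iota h(\Z)$ in $(-\tfrac12,\tfrac12)$, I would choose integer parameters whose $h$‑images land in prescribed gaps of $\Sigma$ near $0$ and near $u$, together with a small $r=r_F$, so that: one translate of $D$ (combined with the slab‑filler $D+\Gamma_{i^{*}}$) captures exactly $\{a\in F:\iota h(a)\in(0,u)\}$ from the interval‑preimage sitting inside $D$; further translates of $D$ and of $E_{r_F}$ — the latter fattened by a translate as in \cref{local_genericity} — delete precisely the finitely many points of $\Sigma$ lying in the tail of $h(D)$ and in $[-\tfrac1{16},0]\cup[u,\delta_0]$. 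The fixed formula $\phi(x;\bar y)$ is the resulting Boolean combination. The main obstacle, and the only real work, is this last matching step: one must kill the uncontrolled tails of $h(D)$ and cope with the fuzzy boundaries of the approximate balls while hitting $(0,u)$ exactly on $F$; this is possible because only finitely many gaps of $\Sigma$ are relevant, so $r_F$ and the integer translations can be tuned to those gaps, and because the "forbidden" regions for the parameters are finite unions of intervals of width $<\tfrac14$ clustered near the diagonal, leaving room for an admissible choice. Translating back by the relevant integer (harmless for external definability) then gives that $h^{-1}\big((0,u)\big)\cap\Z$ itself is externally definable.
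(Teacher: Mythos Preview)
Your plan takes a detour that the paper avoids entirely, and the detour has a real gap. The paper does not use finite satisfiability at all: once one has $\tilde I_i\subseteq D+\Gamma_i$ for every $i$, set $\Gamma:=\bigcup_{i=0}^{L-1}\Gamma_i$. Then $D+\Gamma$ is a single $G$-definable set satisfying
\[
h^{-1}\Bigl(q\Bigl(\textstyle\bigcup_i I_i\Bigr)\Bigr)\ \subseteq\ D+\Gamma\ \subseteq\ h^{-1}\bigl([w_0,w_L]\bigr),
\]
the right inclusion because $\Gamma\subseteq h^{-1}(0)$ and $h(D)\subseteq[w_0,w_L]$. Now pick $0<u<\min(w_1-w_0,\,w_L-w_{L-1})$, choose $e\in G$ with $\iota(h(e))=w_L-w_0-u$, and set $B_1:=(D+\Gamma)\cap(D+\Gamma+e)$. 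From the sandwich one reads off directly that $h^{-1}\bigl((w_L-u,w_L)\bigr)\subseteq B_1\subseteq h^{-1}\bigl([w_L-u,w_L]\bigr)$, since $(w_L-u,w_L)\subseteq I_{L-1}$ and $(w_L-u,w_L)-\iota(h(e))=(w_0,w_0+u)\subseteq I_0$. Translate by any $b\in G$ with $\iota(h(b))=w_L-u$ and delete at most two integers to obtain a $G$-definable $B_3$ whose trace on $\Z$ is exactly $h^{-1}\bigl((0,u)\bigr)\cap\Z$. No approximation, no tails to kill, no matching on finite sets.

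The gap in your approach is twofold. First, you invoke the ``slab-filler'' $D+\Gamma_{i^*}$ inside a scheme that is supposed to use only \emph{integer} parameters $\bar b_F\in\Z$; but $\Gamma_{i^*}\subseteq h^{-1}(0)$ and $h\restriction_\Z$ is injective, so $\Gamma_{i^*}\cap\Z\subseteq\{0\}$ and the nontrivial elements of $\Gamma_{i^*}$ cannot serve as integer parameters. If instead you absorb them as fixed $G$-parameters into $\phi$, you are already working with $G$-definable sets and the finite-satisfiability detour is pointless --- you might as well build the set directly, which is exactly what the paper does. Second, your ``matching step'' is asserted but not carried out: deleting the bad points of $\Sigma$ with fattened copies $E_{r_F}+F'$ of $E_{r_F}$ (via \cref{local_genericity}) requires a number $|F'|$ of translates that depends on $r_F$ and on the window width, both of which you let vary with $F$; you give no argument that a single formula of bounded shape works uniformly. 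The paper's intersection trick with $D+\Gamma$ sidesteps all of this, because $D+\Gamma$ already has sharp outer edges $w_0,w_L$ and full inner content on each $I_i$, so one intersection of two translates isolates a short arc exactly.
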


\begin{proof}
Let $\Gamma:=\bigcup_{i=0}^{L-1}\Gamma_{i}$. So $\bigcup_{i=0}^{L-1}\tilde{I}_{i}\subseteq D+\Gamma$.
Since $D\subseteq h^{-1}\left(\left[w_{0},w_{L}\right]\right)$ and
$\Gamma\subseteq h^{-1}\left(0\right)$, also $D+\Gamma\subseteq h^{-1}\left(\left[w_{0},w_{L}\right]\right)$.
Since $\Gamma$ is finite, $D+\Gamma$ is definable.

Let $0<u<\min\left(w_{1}-w_{0},w_{L}-w_{L-1}\right)<\frac{1}{4}$.
Since $w_{L}-w_{0}<\frac{1}{4}$, we have $0<w_{L}-w_{0}-u<\frac{1}{4}$,
so there is $e\in G$ such that $\i{h\left(e\right)}=w_{L}-w_{0}-u$.
Let $B_{1}:=\left(D+\Gamma\right)\cap\left(D+\Gamma+e\right)$. So
$h^{-1}\left(\left(w_{L}-u,w_{L}\right)\right)\subseteq h^{-1}\left(\left(w_{L-1},w_{L}\right)\right)=\tilde{I}_{L-1}\subseteq D+\Gamma$,
and $h^{-1}\left(\left(w_{L}-u,w_{L}\right)\right)-e\subseteq h^{-1}\left(\left(w_{0},w_{1}\right)\right)=\tilde{I}_{0}\subseteq D+\Gamma$,
therefore $h^{-1}\left(\left(w_{L}-u,w_{L}\right)\right)\subseteq B_{1}$.
We also have $\i{h\left(B_{1}\right)}\subseteq\i{h\left(D+\Gamma\right)}\cap\i{h\left(D+\Gamma+e\right)}=\i{h\left(D+\Gamma\right)}\cap\left(\i{h\left(D+\Gamma\right)}+\i{h\left(e\right)}\right)\subseteq\left[w_{0},w_{L}\right]\cap\left[w_{L}-u,2w_{L}-w_{0}-u\right]=\left[w_{L}-u,w_{L}\right]$,
so $B_{1}\subseteq h^{-1}\left(\left[w_{L}-u,w_{L}\right]\right)$. 

Let $b\in G$ be such that $\i{h\left(b\right)}=w_{L}-u$, and let
$B_{2}:=B_{1}-b$. So $h^{-1}\left(\left(0,u\right)\right)\subseteq B_{2}\subseteq h^{-1}\left(\left[0,u\right]\right)$.
If $u\in\i{h\left(\Z\right)}$, then, since $\iota$ and $h\restriction_{\Z}$
are injective, there is a unique $c\in\Z$ such that $u=\i{h\left(c\right)}$.
Otherwise, let $c=0$. Let $B_{3}:=B_{2}\backslash\left\{ 0,c\right\} $.
So $B_{3}$ is definable and satisfies $h^{-1}\left(\left(0,u\right)\right)\subseteq B_{3}\subseteq h^{-1}\left(\left[0,u\right]\right)\backslash\left\{ 0,c\right\} $.
Therefore $B_{3}\cap\Z=h^{-1}\left(\left(0,u\right)\right)\cap\Z$
is externally definable in $\mathcal{Z}$.
\end{proof}
Recall that in the beginning of this section we chose $\alpha\in\R$
such that $h\left(1\right)=\alpha+\Z=q\left(\alpha\right)$, and that
$\alpha\in\R\backslash\Q$. Denote by $\eta:\Z\rightarrow\R\big/\Z$
the function given by $\eta\left(n\right):=n\alpha+\Z$, i.e., $\eta=h\restriction_{\Z}$.
So for any $S\subseteq\R\big/\Z$ we have $\eta^{-1}\left(S\right)=h^{-1}\left(S\right)\cap\Z$.
\begin{lem}
\label{defining_the_cyclic_order_from_a_short_interval_starting_at_0}Let
$\cM$ be an expansion of $\left(\mathbb{Z},+,0,1\right)$, and suppose
that there exists $0<u<\frac{1}{4}$ such that $J:=\eta^{-1}\left(q\left(\left(0,u\right)\right)\right)$
is definable in $\cM$. Then the cyclic order $C_{\alpha}$ is definable
in $\cM$.
\end{lem}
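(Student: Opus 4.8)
The plan is to recover $C_\alpha$ from the single unary predicate $J$ together with $+$ and $0$. Write $q\colon\R\to\R\big/\Z$ for the quotient map; recall that $\eta=h\restriction_\Z$ is injective with $\eta(\Z)$ dense in $\R\big/\Z$, and for $x\in\Z$ let $c_x\in[0,1)$ be the representative of $\eta(x)$, so $c_x=0$ exactly when $x=0$. Since the cyclic order $\cC$ on $\R\big/\Z$ is invariant under every rotation of $\R\big/\Z$, for pairwise distinct $a,b,c\in\Z$ one has $C_\alpha(a,b,c)\iff\cC(q(0),\eta(b-a),\eta(c-a))$ (rotate by $-\eta(a)$, using $\eta(b)-\eta(a)=\eta(b-a)$). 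So it suffices to produce a first-order formula over $(\Z,+,0,1,J)$ defining, on nonzero distinct $x,y$, the relation $x\prec y:\iff\cC(q(0),\eta(x),\eta(y))$ --- which for such $x,y$ simply says $c_x<c_y$; then $C_\alpha(a,b,c)$ becomes ``$a,b,c$ are pairwise distinct and $(b-a)\prec(c-a)$'', a formula over $(\Z,+,0,1,J)$, hence definable in $\cM$.

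The basic building block I would use is the ``short forward hop'' relation $x\to y:\iff y-x\in J$. Unwinding the definition of $J$ together with $\eta(z-x)=\eta(z)-\eta(x)$, this says precisely that $\eta(y)$ lies on the open arc of length $u$ immediately positively after $\eta(x)$; in particular $x\to y$ forces $x\ne y$, since $0\notin J$. The heart of the matter is that from $J$ alone one can detect whether such a hop ``crosses the distinguished point $q(0)$'': I would show that for $x,y\ne0$ with $x\to y$, the arc from $\eta(x)$ to $\eta(y)$ contains $q(0)$ if and only if $-x\in J$ and $y\in J$. Indeed, for $x\ne0$ the condition $-x\in J$ amounts to $c_x\in(1-u,1)$, i.e.\ $\eta(x)$ lying in the length-$u$ arc ending at $q(0)$, while $y\in J$ (for $y\ne0$) amounts to $c_y\in(0,u)$, i.e.\ $\eta(y)$ lying in the length-$u$ arc starting at $q(0)$; since $u<\tfrac12$ these arcs are disjoint, and as the whole hop has length $<u$ the two conditions can hold together only when the hop straddles $q(0)$ --- the hypotheses $x,y\ne0$ excluding the hop from starting or ending at $q(0)$ itself --- a short case check. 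I then write $\mathrm{NC}(x,y)$ for the assertion that $x\to y$ and not both $-x\in J$ and $y\in J$: a forward hop not crossing $q(0)$.

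Next I would fix a natural number $N$ depending only on $u$ (any $N\ge\tfrac1u+2$ works) and let $x\prec y$ abbreviate the formula asserting that there are $z_1,\dots,z_{k-1}$ with $1\le k\le N$, $z_0=x$, $z_k=y$, $z_i\ne0$ for $0<i<k$, and $\mathrm{NC}(z_i,z_{i+1})$ for all $i<k$ --- a finite disjunction over $k$, each disjunct with a bounded existential block, so first-order over $(\Z,+,0,1,J)$. It then remains to verify, in the standard model, that this defines $\cC(q(0),\eta(x),\eta(y))$ for nonzero distinct $x,y$. If $c_x<c_y$, then by density of $\eta(\Z)$ the real interval $(c_x,c_y)$ --- of length $<1$ --- can be interpolated by at most $N$ points of $\eta(\Z)$ with all consecutive gaps $<u$, yielding a chain $x=z_0,\dots,z_k=y$ strictly increasing inside $(0,1)$; every step is then a forward hop through nonzero points not crossing $q(0)$, so $x\prec y$. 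Conversely, any chain as in the formula forces $c_{z_0}<c_{z_1}<\dots<c_{z_k}$ inside $(0,1)$, since a non-crossing forward hop through nonzero points strictly increases the $[0,1)$-representative; hence $c_x<c_y$. Putting the pieces together gives the stated definition of $C_\alpha$ in $\cM$.

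The step I expect to be the main obstacle is the crossing-detection claim of the second paragraph: one must see that the purely local information carried by $J$ nonetheless suffices to decide whether a short hop wraps around the single distinguished point $q(0)$, and to pin down the correct pair of predicates, namely $-x\in J$ and $y\in J$, that witness it. Once that is in hand, fixing the bound $N$ and performing the density interpolation that produces the chain are routine.
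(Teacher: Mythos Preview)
Your argument is correct and takes a genuinely different route from the paper's. Both proofs start from the same primitive $R(x,y)\Leftrightarrow y-x\in J$ (your $x\to y$) and both fix an integer $N$ depending on $u$ reflecting that the circle is covered by $\lceil 1/u\rceil$ short arcs, but the implementations diverge. The paper fixes in advance a finite set of landmarks $e_0,\dots,e_N\in\Z$ whose images partition the circle into arcs of length $<u$, defines a ``slot'' function $\xi$ recording which arc each point falls into, and then recovers $C_\alpha(a,b,c)$ by a case analysis on whether $\xi(a),\xi(b),\xi(c)$ coincide. You instead first translate by $-a$ to reduce to defining the binary relation $c_x<c_y$ on nonzero $x,y$, observe that a single short hop crosses $q(0)$ iff $-x\in J\wedge y\in J$ (this is the key new idea, and your verification is correct given $u<\tfrac14$), and then chain at most $N$ non-crossing hops. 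Your approach yields a definition over $\{+,0,J\}$ with no extra parameters from $\Z$, at the cost of a bounded block of existential quantifiers; the paper's approach is quantifier-free after naming the landmarks $e_i$, at the cost of those parameters. Both are clean; yours is arguably more intrinsic, while the paper's is more explicit.
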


\begin{proof}
Define $R\left(x,y\right)$ by $y-x\in J$. Then for $a,b\in\Z$ we
have $R\left(a,b\right)$ exactly when either $\i{\eta\left(a\right)}<\i{\eta\left(b\right)}<\i{\eta\left(a\right)}+u$
or $1+\i{\eta\left(b\right)}<\i{\eta\left(a\right)}+u$ (which implies
$\i{\eta\left(b\right)}<\i{\eta\left(a\right)}$). Also note that
$R\left(a,b\right)\implies\neg R\left(b,a\right)$.

Since $\i{\eta\left(\Z\right)}$ is dense in $\left(-\frac{1}{2},\frac{1}{2}\right)$,
we can choose $e_{0},\dots,e_{N}\in\Z$ for some $N<\omega$ such
that $-\frac{1}{2}<\i{\eta\left(e_{0}\right)}<\i{\eta\left(e_{1}\right)}<\dots<\i{\eta\left(e_{N}\right)}<\frac{1}{2}$
and for all $0\le i\le N-1$, $\i{\eta\left(e_{i+1}\right)}-\i{\eta\left(e_{i}\right)}<u$,
and $1+\i{\eta\left(e_{0}\right)}-\i{\eta\left(e_{N}\right)}<u$ (so
in particular, $\frac{1}{2}-\i{\eta\left(e_{N}\right)}<u$ and $\i{\eta\left(e_{0}\right)}+\frac{1}{2}<u$).
For each $0\le i\le N-1$, let $I_{i}\subseteq\Z$ be the set defined
by $\left(R\left(e_{i},x\right)\wedge R\left(x,e_{i+1}\right)\right)\vee\left(x=e_{i}\right)$,
and let $I_{N}\subseteq\Z$ be the set defined by $\left(R\left(e_{N},x\right)\wedge R\left(x,e_{0}\right)\right)\vee\left(x=e_{N}\right)$.
So for $a\in\Z$ we have that for each $0\le i\le N-1$, $a\in I_{i}$
if and only if $\i{\eta\left(e_{i}\right)}\le\i{\eta\left(a\right)}<\i{\eta\left(e_{i+1}\right)}$,
and $a\in I_{N}$ if and only if $\i{\eta\left(e_{N}\right)}\le\i{\eta\left(a\right)}<\frac{1}{2}$
or $-\frac{1}{2}\le\i{\eta\left(a\right)}<\i{\eta\left(e_{0}\right)}$.
Therefore the sets $\left\{ I_{i}\right\} _{i=0}^{N}$ are disjoint
and $\bigcup_{i=0}^{N}I_{i}=\Z$. Let $\xi:\Z\to\left\{ 0,\dots,N\right\} $
be such that $\xi\left(a\right)$ is the unique $0\le i\le N$ for
which $a\in I_{i}$. Since the sets $\left\{ I_{i}\right\} _{i=0}^{N}$
are definable in $\cM$, the relations $\xi\left(x\right)=\xi\left(y\right)$
and $\xi\left(x\right)<\xi\left(y\right)$ are also definable in $\cM$.

Note that for $a,b,c\in\Z$ we have $C_{\alpha}\left(a,b,c\right)$
if and only if $\i{\eta\left(a\right)}<\i{\eta\left(b\right)}<\i{\eta\left(c\right)}$
or $\i{\eta\left(b\right)}<\i{\eta\left(c\right)}<\i{\eta\left(a\right)}$
or $\i{\eta\left(c\right)}<\i{\eta\left(a\right)}<\i{\eta\left(b\right)}$.
Therefore we have:
\begin{itemize}
\item If $\xi\left(a\right)=\xi\left(b\right)=\xi\left(c\right)$, then
\[
C_{\alpha}\left(a,b,c\right)\iff\left(R\left(a,b\right)\wedge R\left(b,c\right)\right)\vee\left(R\left(b,c\right)\wedge R\left(c,a\right)\right)\vee\left(R\left(c,a\right)\wedge R\left(a,b\right)\right)
\]
\item If $\xi\left(a\right)=\xi\left(b\right)\neq\xi\left(c\right)$, then
$C_{\alpha}\left(a,b,c\right)\iff R\left(a,b\right)$.
\item If $\xi\left(b\right)=\xi\left(c\right)\neq\xi\left(a\right)$, then
$C_{\alpha}\left(a,b,c\right)\iff C_{\alpha}\left(b,c,a\right)\iff R\left(b,c\right)$.
\item If $\xi\left(c\right)=\xi\left(a\right)\neq\xi\left(b\right)$, then
$C_{\alpha}\left(a,b,c\right)\iff C_{\alpha}\left(c,a,b\right)\iff R\left(c,a\right)$.
\item If $\xi\left(a\right)$,$\xi\left(b\right)$,$\xi\left(c\right)$
are pairwise distinct, then 
\begin{gather*}
C_{\alpha}\left(a,b,c\right)\iff C_{\alpha}\left(e_{\xi\left(a\right)},e_{\xi\left(b\right)},e_{\xi\left(c\right)}\right)\iff\\
\iff\xi\left(a\right)<\xi\left(b\right)<\xi\left(c\right)\text{ or }\xi\left(b\right)<\xi\left(c\right)<\xi\left(a\right)\text{ or }\xi\left(c\right)<\xi\left(a\right)<\xi\left(b\right)
\end{gather*}
\end{itemize}
Combining these, we get that $C_{\alpha}$ is definable in $\cM$.
\end{proof}
From \cref{externally_definable_short_interval_starting_at_0}
and \cref{defining_the_cyclic_order_from_a_short_interval_starting_at_0}
we get:
\begin{cor}
\label{the_cyclic_order_is_def_in_the_shelah_expansion}The cyclic
order $C_{\alpha}$ is definable in $\cZ^{Sh}$.
\end{cor}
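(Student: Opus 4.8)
The plan is to simply chain together the two preceding results, \cref{externally_definable_short_interval_starting_at_0} and \cref{defining_the_cyclic_order_from_a_short_interval_starting_at_0}, once the relevant sets are identified.

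First I would recall that $\eta=h\restriction_{\Z}$, so for any $S\subseteq\R\big/\Z$ we have $\eta^{-1}\left(S\right)=h^{-1}\left(S\right)\cap\Z$; in particular, for the value $0<u<\frac{1}{4}$ produced by \cref{externally_definable_short_interval_starting_at_0}, the set
\[
J:=\eta^{-1}\left(q\left(\left(0,u\right)\right)\right)=h^{-1}\left(\left(0,u\right)\right)\cap\Z
\]
is externally definable in $\mathcal{Z}$. By the definition of the Shelah expansion, every externally definable subset of $\mathcal{Z}$ is (quantifier-freely) definable in $\mathcal{Z}^{Sh}$, so $J$ is definable in $\mathcal{Z}^{Sh}$.

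Next I would observe that $\mathcal{Z}^{Sh}$ is an expansion of $\left(\Z,+,0,1\right)$: it has universe $\Z$, and since $\mathcal{Z}$ expands $\left(\Z,+,0,1\right)$ and $\mathcal{Z}^{Sh}$ expands $\mathcal{Z}$, the relations $+,0,1$ are definable in $\mathcal{Z}^{Sh}$. Hence \cref{defining_the_cyclic_order_from_a_short_interval_starting_at_0} applies with $\cM:=\mathcal{Z}^{Sh}$ and this particular $u$, and it yields that $C_{\alpha}$ is definable in $\mathcal{Z}^{Sh}$, as desired.

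There is essentially no obstacle here: all the work has already been done in establishing \cref{externally_definable_short_interval_starting_at_0} (the hard analytic/model-theoretic core) and \cref{defining_the_cyclic_order_from_a_short_interval_starting_at_0} (the combinatorial reconstruction of the cyclic order from a single short interval). The only point requiring a word of care is the passage from ``externally definable in $\mathcal{Z}$'' to ``definable in $\mathcal{Z}^{Sh}$'', which is immediate from the definition of $\mathcal{Z}^{Sh}$, and the trivial check that $\mathcal{Z}^{Sh}$ is still an expansion of the group structure so that the hypotheses of \cref{defining_the_cyclic_order_from_a_short_interval_starting_at_0} are met.
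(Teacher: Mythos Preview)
Your proposal is correct and matches the paper's approach exactly: the paper simply states that the corollary follows from \cref{externally_definable_short_interval_starting_at_0} and \cref{defining_the_cyclic_order_from_a_short_interval_starting_at_0}, and your write-up just makes explicit the two trivial bridging observations (externally definable $\Rightarrow$ definable in $\cZ^{Sh}$, and $\cZ^{Sh}$ expands $(\Z,+,0,1)$) needed to chain them.
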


We now work to upgrade \cref{the_cyclic_order_is_def_in_the_shelah_expansion}
and obtain that $C_{\alpha}$ is definable in $\mathcal{Z}$. The
following is a special case of \cite[Theorem 6.1]{SW2019} where $\left(M,+,C\right)$
is $\left(\mathbb{Z},+,C_{\alpha}\right)$ and $\cN=\cM$: 
\begin{fact}
\label{structure_of_unary_subsets_in_dp_minimal_expansions_of_cyclic_order}Let
$\cM$ be a dp-minimal expansion of $\left(\mathbb{Z},+,0,1,C_{\alpha}\right)$.
Then every subset of $\Z$ which is definable in $\cM$ is a finite
union of sets of the form $a+nJ$ for $a\in\Z$, $1\le n<\omega$,
and $J\subseteq\Z$ convex with respect to $C_{\alpha}$.
\end{fact}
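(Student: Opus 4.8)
The plan is to recover this special case of \cite[Theorem 6.1]{SW2019} by re-running the apparatus of Sections 4 and 5 on $\cM$ itself and reading off the unary definable sets; below I describe the steps and flag the one real difficulty.

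Fix a definable $X\subseteq\Z$; absorbing parameters (which lie in $\Z$) into the defining formula, I may take $X$ to be $\emptyset$-definable. Since $\cM$ expands $\left(\Z,+,0,1,C_{\alpha}\right)$, which has $G^{00}\neq G^{0}$ (the converse to \cref{main_theorem_cyclic_order}), all of Sections 4 and 5 applies to $\cM$: in a monster model $G\succ\cM$ I obtain the surjection $h\colon G\twoheadrightarrow\R\big/\Z$ of \cref{homomorphism_to_the_circle}, with $h\restriction_{\Z}$ injective and of dense image, which — using that $C_{\alpha}$ is definable in $\cM$, and replacing $h$ by $-h$ if the orientation comes out reversed — I may take so that the cyclic order $h$ induces on $\Z$ is exactly $C_{\alpha}$ (so in particular $h\left(1\right)=\alpha+\Z$). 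Let $\tilde{X}\subseteq G$ be defined by the same formula as $X$. By \cref{only_finitely_many_isolated_points} and \cref{isolated_pt_of_img_of_Z_def_set_comes_frm_sngle_elmnt_of_Z} I first peel off the finitely many isolated points of $h\left(\tilde{X}\right)$, each of which contributes at most one element of $\Z$ to $X$ — a singleton $\left\{ a\right\} =a+1\cdot\left\{ 0\right\} $, with $\left\{ 0\right\} $ vacuously $C_{\alpha}$-convex — so that I may assume $h\left(\tilde{X}\right)$ has no isolated point.

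Next I would run the arguments behind \cref{almost_invariance_under_certain_infinitesimal_translations} and \cref{almost_invariance_under_certain_small_translations} for $\tilde{X}$: with $\epsilon$ the difference of two consecutive terms of a $\Z$-indiscernible sequence, $F:=h\left(\Delta_{\tilde{X},\epsilon}\right)$ is finite, and for each $r>0$ there is $d_{r}\in\Z$ with $0<\i{h\left(d_{r}\right)}<r$ such that every $a\in G$ with $h\left(a\right)\notin F+q\left(\left(-r,r\right)\right)$ satisfies $a\in\tilde{X}\iff a\pm d_{r}\in\tilde{X}$. Cut $\R\big/\Z$ at the finitely many points of $F$, obtaining finitely many open $\cC$-convex arcs, and fix one of them, $I$. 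Letting $r\to0$, I get translation-invariance of $\tilde{X}$ under the $d_{r}$ throughout the growing interior of $h^{-1}\left(I\right)$; the conclusion I want for this step is that $\tilde{X}\cap h^{-1}\left(I\right)$ equals a union of cosets of $n_{I}G$ intersected with $h^{-1}\left(I\right)$, for some $n_{I}\in\N$ (here $n_{I}G$ is the copy in $G$ of the unique index-$n_{I}$ subgroup $n_{I}\Z$ of $\Z$). Granting that, I intersect with $\Z$: the part of $X$ inside $\eta^{-1}\left(I\right)$ becomes a finite union of sets $\left(a+n_{I}\Z\right)\cap\eta^{-1}\left(I\right)$ with $a\in\Z$, and since the $n_{I}$-fold covering of $\R\big/\Z$ pulls the arc $I-\eta\left(a\right)$ back to a disjoint union of at most $n_{I}$ arcs $I'$, each of these sets is a finite union of sets $a+n_{I}J$ with $J=\left\{ j\in\Z:\eta\left(j\right)\in I'\right\} $, which is $C_{\alpha}$-convex. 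Reassembling the finitely many arcs together with the finitely many peeled-off elements then gives the desired description of $X$, whence it holds for all definable subsets of $\Z$ in $\cM$ by elementarity.

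The two finiteness inputs above — finitely many isolated points of $h\left(\tilde{X}\right)$, and finiteness of $h\left(\Delta_{\tilde{X},\epsilon}\right)$ — are where dp-minimality rather than mere NIP is needed, each being the conclusion of an inp-pattern of depth $2$ as in the cited proofs. The step I expect to be the main obstacle is the one I merely flagged: turning ``almost invariant under arbitrarily small translations away from a finite set'' into ``a union of cosets of a single $n_{I}G$ on all of $h^{-1}\left(I\right)$'', that is, propagating the local translation-invariance across an entire $\cC$-convex arc, endpoints included. This is the real content of \cite[Theorem 6.1]{SW2019}, where it is handled via a general cell-decomposition for dp-minimal cyclically- and linearly-ordered abelian groups; the monster-and-$h$ route sketched here is a more hands-on alternative in the spirit of the present paper.
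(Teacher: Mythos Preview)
The paper gives no proof here at all: the statement is recorded as a Fact and simply cited from \cite[Theorem 6.1]{SW2019}. Your proposal is thus not something to compare against the paper's argument but an attempt at a self-contained proof, and it stalls exactly where you say it does. The gap is genuine and is essentially the whole content of the cited result. \cref{almost_invariance_under_certain_small_translations} hands you, for each $r>0$, some $d_{r}\in\Z$ with small $\i{h\left(d_{r}\right)}$ under which $\tilde{X}$ is almost invariant away from $F$, but nothing in Section~5 tells you in which residue class modulo $n$ each $d_{r}$ lies, and the integer $n_{I}$ you posit has to be produced from somewhere. The Section~5 machinery is tuned to one particular $D$ approximating a closed arc, where no nontrivial coset structure enters; for an arbitrary $\tilde{X}$ that might, say, live on $2G$, near-invariance under the $d_{r}$ only forces the $d_{r}$ to be even --- it does not by itself yield a decomposition into pieces $a+nJ$. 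As you yourself concede, filling this in is precisely the cell-decomposition work of \cite{SW2019}, so the route is circular: you recover the easy half (finitely many exceptional points on the circle) and hand the hard half back to the reference you were trying to avoid.

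There is also a smaller unjustified step earlier. The $h$ of \cref{homomorphism_to_the_circle} is built abstractly via Gleason--Yamabe, and there is no reason it must satisfy $h\left(1\right)=\alpha+\Z$; replacing $h$ by $-h$ only handles orientation. For instance $C_{2\alpha}$ is already definable in $\left(\Z,+,0,1,C_{\alpha}\right)$ via $C_{2\alpha}\left(x,y,z\right)\leftrightarrow C_{\alpha}\left(2x,2y,2z\right)$, so nothing rules out the abstract $h$ inducing $C_{2\alpha}$ rather than $C_{\alpha}$, in which case your output would be ``$C_{2\alpha}$-convex'' pieces, not the $C_{\alpha}$-convex ones the Fact demands. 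The clean fix is to use not the abstract $h$ but the explicit $h=\mathrm{st}\circ\tilde{\eta}$ constructed in the proof of \cref{if_cyclic_order_is_definable_then_G00_neq_G0}, which has $h\restriction_{\Z}=\eta$ by design and to which the Section~5 lemmas apply verbatim.
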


In the above, a subset $J\subseteq\Z$ is convex with respect to $C_{\alpha}$
if for every $a,b\in J$ such that $a\neq b$, we have either $\left\{ k\in\Z\,:\,C_{\alpha}\left(a,k,b\right)\right\} \subseteq J$
or $\left\{ k\in\Z\,:\,C_{\alpha}\left(b,k,a\right)\right\} \subseteq J$.
Equivalently, $J$ is convex if and only if there is an interval $I\subseteq\R$
of length at most $1$ such that $J=\eta^{-1}\left(q\left(I\right)\right)$.

Let $A\subseteq\Z$ be infinite and definable in $\cZ$ such that
$A\subseteq\eta^{-1}\left(q\left(\left(-\frac{1}{32},\frac{1}{32}\right)\right)\right)$
(e.g., $A:=B_{0,\frac{1}{32}}\cap\Z$, see \cref{approximate_balls_notation}).
So $A$ is definable in $\cZ^{Sh}$, which, by \cref{the_cyclic_order_is_def_in_the_shelah_expansion}
and \cref{shelah_expansion_preserves_dp_rank}, is a dp-minimal
expansion of $\left(\mathbb{Z},+,0,1,C_{\alpha}\right)$. So by \cref{structure_of_unary_subsets_in_dp_minimal_expansions_of_cyclic_order},
$A=\bigcup_{i=0}^{N}\left(a_{i}+n_{i}J_{i}\right)$ for some $N<\omega$,
where for each $i\le N$, $a_{i}\in\Z$, $1\le n_{i}<\omega$, and
$J_{i}\subseteq\Z$ convex with respect to $C_{\alpha}$. Let $I_{i}\subseteq\R$
be an interval of length at most $1$ such that $J_{i}=\eta^{-1}\left(q\left(I_{i}\right)\right)$.
Note that for every $s\in\R$, $\eta^{-1}\left(q\left(s\right)\right)$
is either empty or a singleton. So by throwing away at most finitely
many points from $A$, we may assume that for each $i\le N$, $I_{i}$
is open and nonempty.
\begin{notation}
For $B\subseteq\Z$ and $1\le m<\omega$ we denote $B\big/m=\frac{1}{m}B:=\left\{ a\in\Z\,:\,ma\in B\right\} $.
\end{notation}

\begin{obs}
\label{dividing_an_inteval}Let $I=\left(s,t\right)\subseteq\R$
be an interval of length at most $1$ and let $J:=\eta^{-1}\left(q\left(I\right)\right)$.
Let $1\le k<\omega$, and for each $0\le i\le k-1$ denote $I_{i}:=\left(\frac{s}{k},\frac{t}{k}\right)+\frac{i}{k}$.
Then $J\big/k=\bigcup_{i=0}^{k-1}\eta^{-1}\left(q\left(I_{i}\right)\right)$.
The same holds if we replace all open intervals with closed or half-open
intervals.
\end{obs}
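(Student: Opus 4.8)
The plan is to unwind both sides of the claimed equality down to a statement about membership of $a\alpha$ in a union of translates of intervals in $\R$, and then to observe that the two resulting conditions agree after scaling by $k$. There is really no substantive obstacle here; the observation is a definition chase, and the only point requiring a moment's care is the bookkeeping of cosets.

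First I would rewrite the left-hand side concretely. Since $\eta(n)=n\alpha+\Z=q(n\alpha)$ and $q^{-1}(q(I))=I+\Z$, where $I+\Z:=\bigcup_{n\in\Z}(I+n)$, for $a\in\Z$ we have $a\in J\big/k$ iff $ka\in J$ iff $q(ka\alpha)\in q(I)$ iff $ka\alpha\in I+\Z$.

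Next I would do the same for the right-hand side. For each $0\le i\le k-1$, $a\in\eta^{-1}(q(I_{i}))$ iff $a\alpha\in I_{i}+\Z=\left(\frac{s}{k},\frac{t}{k}\right)+\frac{i}{k}+\Z$. Taking the union over $i$ and using that $\bigcup_{i=0}^{k-1}\left(\frac{i}{k}+\Z\right)=\frac{1}{k}\Z$, we get that $a\in\bigcup_{i=0}^{k-1}\eta^{-1}(q(I_{i}))$ iff $a\alpha\in\left(\frac{s}{k},\frac{t}{k}\right)+\frac{1}{k}\Z$. Multiplying this last condition by $k$, which maps $\frac{1}{k}\Z$ bijectively onto $\Z$, gives $ka\alpha\in(s,t)+\Z=I+\Z$, which is exactly the condition obtained above for the left-hand side. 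Hence $J\big/k=\bigcup_{i=0}^{k-1}\eta^{-1}(q(I_{i}))$.

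Finally, nothing in this chain of equivalences depends on whether the intervals are open, closed, or half-open: each membership relation ``$x\in\,\cdot\,$'' is preserved verbatim under the bijection $x\mapsto kx$ of $\R$, so the identical argument yields the closed and half-open versions. The only thing needing care is the coset bookkeeping just mentioned, and this presents no genuine obstacle.
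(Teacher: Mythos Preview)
Your proof is correct: the definition chase via $q^{-1}(q(I))=I+\Z$ and the identity $\bigcup_{i=0}^{k-1}\bigl(\tfrac{i}{k}+\Z\bigr)=\tfrac{1}{k}\Z$ is exactly the intended unwinding, and the closing remark about open, closed, or half-open intervals is fine. The paper states this as an observation without proof, so there is no alternative argument to compare against; you have simply written out what the paper leaves to the reader.
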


For each $i\le N$, write $a_{i}=n_{i}b_{i}+r_{i}$ such that $b_{i},r_{i}\in\Z$,
$0\le r_{i}\le n_{i}-1$. Denote $p:=\i{\eta\left(r_{0}\right)}$,
$s:=-\frac{1}{32}-p$, $t:=\frac{1}{32}-p$. So by replacing $A$
with $A-r_{0}$ we may assume that $r_{0}=0$, with $A\subseteq\eta^{-1}\left(q\left(\left(s,t\right)\right)\right)$
instead of $A\subseteq\eta^{-1}\left(q\left(\left(-\frac{1}{32},\frac{1}{32}\right)\right)\right)$.
Denote $m:=\prod_{i=0}^{N}n_{i}$ and $B:=A\big/m$. Denote also $\Lambda:=\left[\frac{s}{m}-\frac{15}{32m},\frac{t}{m}+\frac{m-1}{m}+\frac{15}{32m}\right)$,
and note that the length of $\Lambda$ is exactly $1$.
\begin{prop}
\label{writing_B_with_disjoint_open_intervals_in_lambda}$B$
can be written as $B=\bigcup_{i=0}^{N^{\prime}}\eta^{-1}\left(q\left(I_{i}^{\prime}\right)\right)$
for some $N^{\prime}<\omega$, such that:
\begin{itemize}
\item for each $i\le N^{\prime}$, $I_{i}^{\prime}\subseteq\Lambda$ is
a nonempty interval,
\item $\left\{ I_{i}^{\prime}\right\} _{i=0}^{N^{\prime}}$ are pairwise
disjoint, enumerated by their order in $\R$, and
\item for each $1\le i\le N^{\prime}$, $I_{i}^{\prime}$ is open, and either
$I_{0}^{\prime}$ is open, or $I_{0}^{\prime}$ is of the form $\left[\frac{s}{m}-\frac{15}{32m},w\right)$.
\end{itemize}
\end{prop}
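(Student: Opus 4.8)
The plan is to compute $B=A\big/m$ directly from the decomposition $A=\bigcup_{i=0}^{N}\left(a_{i}+n_{i}J_{i}\right)$, rewrite it as $\eta^{-1}\left(V\right)$ for a finite union $V$ of open arcs in $\R\big/\Z$, and then put that union into normal form using the fact that $\Lambda$ is a fundamental domain for $q$.

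First I would analyze each summand of $B=A\big/m=\bigcup_{i=0}^{N}\left(a_{i}+n_{i}J_{i}\right)\big/m$. Since $m=\prod_{j=0}^{N}n_{j}$, each $n_{i}$ divides $m$, so $n_{i}\mid mc$ for every $c\in\Z$; as every element of $a_{i}+n_{i}J_{i}$ is $\equiv a_{i}\equiv r_{i}\pmod{n_{i}}$ with $0\le r_{i}<n_{i}$, the condition $mc\in a_{i}+n_{i}J_{i}$ forces $r_{i}=0$. Hence $\left(a_{i}+n_{i}J_{i}\right)\big/m=\emptyset$ whenever $r_{i}\neq0$, while for those $i$ with $r_{i}=0$ (which includes $i=0$, since $r_{0}=0$) we have $a_{i}=n_{i}b_{i}$ and therefore
\[
\left(a_{i}+n_{i}J_{i}\right)\big/m=\left(n_{i}\left(b_{i}+J_{i}\right)\right)\big/m=\left(b_{i}+J_{i}\right)\big/\left(m/n_{i}\right).
\]
Now $b_{i}+J_{i}=\eta^{-1}\left(q\left(I_{i}+b_{i}\alpha\right)\right)$ is the $\eta$-preimage of a nonempty open interval of length $\le1$, so \cref{dividing_an_inteval} applied with $k=m/n_{i}$ expresses $\left(b_{i}+J_{i}\right)\big/\left(m/n_{i}\right)$ as a finite union of sets $\eta^{-1}\left(q\left(I\right)\right)$ with $I$ a nonempty open interval. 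Collecting everything, $B=\eta^{-1}\left(V\right)$ where $V\subseteq\R\big/\Z$ is a finite union of open arcs; the $i=0$ summand alone contributes a nonempty one, so $V$ is a nonempty open subset of $\R\big/\Z$ with finitely many connected components, each an open arc.

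Next I would normalize $V$. Write $\lambda_{0}:=\frac{s}{m}-\frac{15}{32m}$ for the left endpoint of $\Lambda$; using $t-s=\frac{1}{16}$ one checks that $\Lambda$ has length exactly $1$, so $q\restriction_{\Lambda}$ is a bijection onto $\R\big/\Z$. Let $A_{0},\dots,A_{N'}$ be the connected components of $V$. If some $A_{i}$ equals $\R\big/\Z$ then $B=\Z=\eta^{-1}\left(q\left(\Lambda\right)\right)$ and we finish with $N'=0$, $I_{0}'=\Lambda$. Otherwise each $A_{i}$ is a proper open arc. If $q\left(\lambda_{0}\right)\notin V$, then each $A_{i}$, being a connected open subset of $\left(\R\big/\Z\right)\setminus\left\{ q\left(\lambda_{0}\right)\right\} $, equals $q\left(I_{i}'\right)$ for a unique open interval $I_{i}'\subseteq\left(\lambda_{0},\lambda_{0}+1\right)\subseteq\Lambda$; these are pairwise disjoint since $q\restriction_{\Lambda}$ is injective, and after reindexing by position in $\R$ we are done. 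If $q\left(\lambda_{0}\right)\in V$, it lies in exactly one component, say $A_{0}$, which we lift to an open interval $\left(c,c'\right)$ with $c<\lambda_{0}<c'$ and $c'-c<1$; then $q\left(\left(c,c'\right)\right)=q\left(\left[\lambda_{0},c'\right)\right)\sqcup q\left(\left(c+1,\lambda_{0}+1\right)\right)$, with both $\left[\lambda_{0},c'\right)$ and $\left(c+1,\lambda_{0}+1\right)$ lying in $\Lambda$. Setting $I_{0}':=\left[\lambda_{0},c'\right)$, adjoining the open interval $\left(c+1,\lambda_{0}+1\right)$ to the list, and treating $A_{1},\dots,A_{N'}$ as in the previous case, we obtain nonempty intervals inside $\Lambda$ — with $I_{0}'$ of the allowed boundary shape $\left[\frac{s}{m}-\frac{15}{32m},w\right)$ and all the others open — that are pairwise disjoint (the two pieces of $A_{0}$ are disjoint from each other and, being contained in $A_{0}$, from every other $A_{i}$). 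Reindexing by position in $\R$ completes the proof.

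The first step is where care is needed: one must track the divisibility conditions to determine exactly which summands of $A$ survive division by $m$, and then invoke \cref{dividing_an_inteval} correctly on the surviving ones. The normalization in the second step is conceptually routine once one works inside $\R\big/\Z$; its only delicate point is the boundary case, where one checks that the two intervals produced from the component straddling $q\left(\lambda_{0}\right)$ both sit inside $\Lambda$ and remain disjoint from the rest.
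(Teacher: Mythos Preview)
Your proof is correct and follows essentially the same approach as the paper: analyze the divisibility to discard summands with $r_i\neq 0$, apply \cref{dividing_an_inteval} to the surviving ones, and then normalize the resulting finite union of open arcs into $\Lambda$. The only difference is cosmetic: you pass to connected components of $V\subseteq\R\big/\Z$ first and then lift to $\Lambda$, whereas the paper lifts each interval into $\Lambda$ first (splitting at the boundary if necessary) and only afterwards merges overlapping pieces.
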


\begin{proof}
For each $i\le N$, denote $m_{i}:=\frac{m}{n_{i}}=\prod_{j\neq i}n_{j}$.
Clearly, if $r_{i}\neq0$ then $\frac{1}{m}\left(a_{i}+n_{i}J_{i}\right)=\emptyset$.
Conversely, if $r_{i}=0$ then $\frac{1}{m}\left(a_{i}+n_{i}J_{i}\right)=\frac{1}{m_{i}}\left(b_{i}+J_{i}\right)\neq\emptyset$,
since $\eta\left(m_{i}\Z-b_{i}\right)$ is dense in $\R\big/\Z$.
So $mB=A\cap m\Z=\bigcup\left\{ a_{i}+n_{i}J_{i}\,:\,i\le N,\,r_{i}=0\right\} $,
and since $r_{0}=0$, this union is over a nonempty set. Therefore,
and since $\left(mB\right)\big/m=B$, by replacing $A$ with $mB$
we may assume that for each $i\le N$, $r_{i}=0$. So $B=\bigcup_{i=0}^{N}\frac{1}{m_{i}}\left(b_{i}+J_{i}\right)$.

By replacing $J_{i}$ with $b_{i}+J_{i}$ we may assume that for each
$i\le N$, $b_{i}=0$, i.e., $B=\bigcup_{i=0}^{N}\frac{1}{m_{i}}J_{i}$.
For each $i$, by \cref{dividing_an_inteval} for $J_{i}=\eta^{-1}\left(q\left(I_{i}\right)\right)$
we get that $\frac{1}{m_{i}}J_{i}=\bigcup_{j=0}^{m_{i}-1}\eta^{-1}\left(q\left(I_{i,j}\right)\right)$
for nonempty open intervals $\left\{ I_{i,j}\right\} _{j=0}^{m_{i}-1}$
in $\R$, each of length at most $1$. So $B=\bigcup_{i=0}^{N}\bigcup_{j=0}^{m_{i}-1}\eta^{-1}\left(q\left(I_{i,j}\right)\right)$,
and we rewrite this as $B=\bigcup_{i=0}^{N^{\prime\prime}}\eta^{-1}\left(q\left(I_{i}^{\prime\prime}\right)\right)$.

We may assume that for each $i$, $I_{i}^{\prime\prime}\subseteq\Lambda$:
There exists $k\in\Z$ such that $\left(I_{i}^{\prime\prime}+k\right)\cap\Lambda\neq\emptyset$,
and since $I_{i}^{\prime\prime}$ is of length at most $1$, there
is a smallest such $k$, and it satisfies $I_{i}^{\prime\prime}+k\subseteq\Lambda\cup\left(\Lambda-1\right)$.
So we replace $I_{i}^{\prime\prime}$ with the two intervals $\left(I_{i}^{\prime\prime}+k\right)\cap\Lambda$
and $\left(I_{i}^{\prime\prime}+k+1\right)\cap\Lambda$ (or just the
first one, if the second one is empty). So for each $i$, either $I_{i}^{\prime\prime}$
is open, or $I_{i}^{\prime\prime}=\left[\frac{s}{m}-\frac{15}{32m},w_{i}\right)$
for some $w_{i}\le\frac{t}{m}+\frac{m-1}{m}+\frac{15}{32m}$. By combining
intervals which intersect, we can rewrite $\bigcup_{i=0}^{N^{\prime\prime}}I_{i}^{\prime\prime}$
as a finite union $\bigcup_{i=0}^{N^{\prime}}I_{i}^{\prime}$ of pairwise
disjoint intervals which satisfy all the requirements, and $B=\bigcup_{i=0}^{N^{\prime\prime}}\eta^{-1}\left(q\left(I_{i}^{\prime\prime}\right)\right)=\eta^{-1}\left(q\left(\bigcup_{i=0}^{N^{\prime\prime}}I_{i}^{\prime\prime}\right)\right)=\eta^{-1}\left(q\left(\bigcup_{i=0}^{N^{\prime}}I_{i}^{\prime}\right)\right)=\bigcup_{i=0}^{N^{\prime}}\eta^{-1}\left(q\left(I_{i}^{\prime}\right)\right)$.
\end{proof}
\begin{prop}
\label{definable_short_interval}There is an open interval $I\subseteq\R$
of length $0<L\le\frac{1}{16}$ such that $\eta^{-1}\left(q\left(I\right)\right)$
is definable in $\cZ$.
\end{prop}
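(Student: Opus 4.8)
The plan is to use that $B:=A\big/m$ is a genuine $\cZ$-definable subset of $\Z$ — since $x\mapsto mx$ is a term and $A$ is definable in $\cZ$ — and, via \cref{writing_B_with_disjoint_open_intervals_in_lambda}, is presented as a finite union of pairwise disjoint arcs $\eta^{-1}(q(I_i'))$; the task is then to collapse this union down to a single arc without leaving $\cZ$.

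First I would record two facts about this presentation. The arcs are short: since $A\subseteq\eta^{-1}(q((s,t)))$ with $t-s=\tfrac1{16}$, we have $\eta(B)\subseteq\{y\in\R\big/\Z:my\in q((s,t))\}$, hence $\overline{\eta(B)}\subseteq\{y:my\in q([s,t])\}$, which is a union of $m$ closed arcs of length $\tfrac1{16m}$ mutually separated by gaps of length $\tfrac{15}{16m}$; so every connected component of $\overline{\eta(B)}$, in particular every $\overline{q(I_i')}$, has length at most $\tfrac1{16m}\le\tfrac1{16}$. Also $B\neq\emptyset$ (since $mB=A\cap m\Z\neq\emptyset$, because $r_0=0$) and $B\neq\Z$ (otherwise $m\Z\subseteq A$, contradicting the density of $\eta(m\Z)$ in $\R\big/\Z$). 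Throughout I will use freely that translating by an element of $\Z$, complementing in $\Z$, and intersecting all act on such disjoint-arc presentations in the evident way and preserve $\cZ$-definability.

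I would then isolate a single arc by induction on the number $k$ of arcs in a disjoint-arc presentation of the current $\cZ$-definable set, keeping all arcs of length $\le\tfrac1{16}$. If $k=1$, the set is $\eta^{-1}(q(I))$ for one nonempty arc $I$ of length $\le\tfrac1{16}$; deleting the closed endpoint of $I$ if present (one point at most, hence harmless) makes $I$ open and we are done. For $k\ge2$, cut the circle at one of the gaps so the arcs become $q(I_0),\dots,q(I_{k-1})$ with left endpoints $a_0<\dots<a_{k-1}$; pick $c\in\Z$ with $\i{\eta(c)}=:\delta>0$ sufficiently small — smaller than every arc length, every gap, every spacing $a_{j+1}-a_j$, and fine enough to separate the finitely many pairwise differences $a_i-a_j$ — so that $B\cap(\Z\setminus(B+c))$ is the $\eta$-preimage of $k$ disjoint arcs of length $\delta$ at $a_0,\dots,a_{k-1}$. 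Now take $e\in\Z$ with $\i{\eta(e)}$ within $\delta$ of $a_1-a_0$ (possible as $\eta(\Z)$ is dense) and intersect with the $e$-translate: the arc at $a_i$ can meet the $e$-translate of the arc at $a_j$ only when $a_i-a_j=a_1-a_0$, any fixed positive difference is realized by at most $k-1$ pairs, and the pair $(1,0)$ survives, so the outcome is a nonempty $\cZ$-definable union of at most $k-1$ disjoint arcs, each of length $\le\delta\le\tfrac1{16}$. Iterating finitely often yields a single arc.

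The crux is this reduction $k\rightsquigarrow k-1$: one has to pick the translations $c$ and $e$ precisely enough that the intersection of $B\cap(\Z\setminus(B+c))$ with its $e$-translate destroys at least one arc yet keeps at least one — this is what drives the careful comparison of $\delta$ with the pairwise differences $a_i-a_j$ — and the preliminary thinning to arcs of width $\delta$ is exactly what makes a translate's overlap pattern controllable, since a thin arc can then meet only arcs at essentially one prescribed position. The surrounding machinery is already available: the passage to $\cZ^{Sh}$ and the structure theorem \cref{structure_of_unary_subsets_in_dp_minimal_expansions_of_cyclic_order} have been carried out in the build-up to \cref{writing_B_with_disjoint_open_intervals_in_lambda}, and translations and Boolean combinations with standard parameters keep everything inside $\cZ$.
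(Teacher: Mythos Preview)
Your inductive reduction $k\rightsquigarrow k-1$ has a gap: it fails when the left endpoints $a_0,\dots,a_{k-1}$ happen to be equally spaced on $\R\big/\Z$, i.e., form a coset of a finite cyclic subgroup. In that case translating the thinned set by $\gamma\approx a_1-a_0$ cyclically permutes the $k$ arcs, and the intersection with the translate still has $k$ arcs. Cutting the circle at a gap does not prevent this, since the translation amount need not be smaller than the chosen gap, and on the circle the difference $a_1-a_0$ is then realised by all $k$ cyclic pairs $(i,\,i{-}1\bmod k)$, not $k-1$. This configuration is not exotic for $B$: by \cref{dividing_an_inteval} each piece $(b_i+J_i)/m_i$ arising in the proof of \cref{writing_B_with_disjoint_open_intervals_in_lambda} is itself $m_i$ arcs at spacing $1/m_i$, so whenever $B$ reduces to a single such piece with $m_i\ge 2$ (which occurs, for instance, when $r_j\neq 0$ for all $j\ge 1$) you land exactly in the bad case.

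The paper's argument is entirely different and needs no induction on $k$. The tool you are not exploiting is that the approximate balls $B_{u_1,v_1,v_2,u_2}$ of \cref{approximate_balls_notation} are already $\Z$-definable in $\cZ$. After checking that each $I_i'$ lies inside one of the short windows $K_j=\bigl(\tfrac{s}{m},\tfrac{t}{m}\bigr)+\tfrac{j}{m}$ and arranging (by adjusting finitely many points) that consecutive $I_i'$ have a genuine gap between them, the paper simply sets $B':=\Z\cap B_{v_0-r,\,v_0,\,w_0,\,w_0+r}$ for small $r>0$ and reads off $B\cap B'=\eta^{-1}\bigl(q(I_0')\bigr)$, a $\cZ$-definable preimage of a single open interval of length at most $\tfrac{1}{16m}\le\tfrac{1}{16}$.
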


\begin{proof}
Denote $K:=\left(s,t\right)$. Recall that $B=A\big/m$ and $A\subseteq\eta^{-1}\left(q\left(K\right)\right)$,
so $B\subseteq\frac{1}{m}\eta^{-1}\left(q\left(K\right)\right)$.
By \cref{dividing_an_inteval} we have $\frac{1}{m}\eta^{-1}\left(q\left(K\right)\right)=\bigcup_{i=0}^{m-1}\eta^{-1}\left(q\left(K_{i}\right)\right)$,
where $K_{i}:=\left(\frac{s}{m},\frac{t}{m}\right)+\frac{i}{m}$.
So for each $0\le i\le m-1$, $K_{i}\subseteq\left(\frac{s}{m},\frac{t}{m}+\frac{m-1}{m}\right)\subseteq\Lambda$.
Write $B=\bigcup_{i=0}^{N^{\prime}}\eta^{-1}\left(q\left(I_{i}^{\prime}\right)\right)$
as in \cref{writing_B_with_disjoint_open_intervals_in_lambda}.
So $\eta^{-1}\left(q\left(\bigcup_{i=0}^{N^{\prime}}I_{i}^{\prime}\right)\right)=B\subseteq\bigcup_{i=0}^{m-1}\eta^{-1}\left(q\left(K_{i}\right)\right)=\eta^{-1}\left(q\left(\bigcup_{i=0}^{m-1}K_{i}\right)\right)\subseteq\eta^{-1}\left(q\left(\left(\frac{s}{m},\frac{t}{m}+\frac{m-1}{m}\right)\right)\right)$. 

Denote $S:=\bigcup_{i=0}^{N^{\prime}}I_{i}^{\prime}$ and $\bar{K}_{i}:=\left[\frac{s}{m},\frac{t}{m}\right]+\frac{i}{m}$.
Let $U:=\left(\Lambda\backslash\left\{ \frac{s}{m}-\frac{15}{32m}\right\} \right)\backslash\left(\bigcup_{i=0}^{m-1}\bar{K}_{i}\right)=\left(\frac{s}{m}-\frac{15}{32m},\frac{s}{m}\right)\cup\left(\bigcup_{i=0}^{m-2}\left(\frac{t+i}{m},\frac{s+i+1}{m}\right)\right)\cup\left(\frac{t}{m}+\frac{m-1}{m},\frac{t}{m}+\frac{m-1}{m}+\frac{15}{32m}\right)$.
Suppose towards a contradiction that $U\cap S\neq\emptyset$. Note
that either $S$ is open or $S\backslash\left\{ \frac{s}{m}-\frac{15}{32m}\right\} $
is open, hence $U\cap S$ is open. Note that $q\restriction_{\Lambda\backslash\left\{ \frac{s}{m}-\frac{15}{32m}\right\} }$
is a homeomorphism, therefore, since $\eta\left(\Z\right)$ is dense
in $\R\big/\Z$, we have $\eta^{-1}\left(q\left(U\cap S\right)\right)\neq\emptyset$.
Since $q\restriction_{\Lambda}$ is a bijection, we get $\eta^{-1}\left(q\left(U\cap S\right)\right)=\eta^{-1}\left(q\left(U\right)\right)\cap\eta^{-1}\left(q\left(S\right)\right)\subseteq\eta^{-1}\left(q\left(U\right)\right)\cap\eta^{-1}\left(q\left(\bigcup_{i=0}^{m-1}K_{i}\right)\right)=\eta^{-1}\left(q\left(U\cap\bigcup_{i=0}^{m-1}K_{i}\right)\right)=\emptyset$,
a contradiction. So $S\subseteq\left\{ \frac{s}{m}-\frac{15}{32m}\right\} \cup\bigcup_{i=0}^{m-1}\bar{K}_{i}$.
But we know that for each $1\le i\le N^{\prime}$, $I_{i}^{\prime}$
is open, and either $I_{0}^{\prime}$ is open, or $I_{0}^{\prime}$
is of the form $\left[\frac{s}{m}-\frac{15}{32m},w\right)$. So we
must have that $I_{0}^{\prime}$ is open as well and that $S\subseteq\bigcup_{i=0}^{m-1}K_{i}$.

For each $i\le N^{\prime}$ denote $I_{i}^{\prime}=\left(v_{i},w_{i}\right)$.
Since $I_{i}^{\prime}$ is an interval, there is a unique $0\le j\le m-1$
such that $I_{i}^{\prime}\subseteq K_{j}$. Denote this $j$ by $\xi\left(i\right)$.
Suppose that for some $i\le N^{\prime}-1$, $w_{i}=v_{i+1}$. Then
$\xi\left(i\right)=\xi\left(i+1\right)$ and $\left(v_{i},w_{i+1}\right)\subseteq K_{\xi\left(i\right)}$.
Note that $\eta^{-1}\left(q\left(w_{i}\right)\right)$ is either empty
or a singleton, so by adding at most one point to $B=\bigcup_{i=0}^{N^{\prime}}\eta^{-1}\left(q\left(I_{i}^{\prime}\right)\right)$,
we can replace the two intervals $I_{i}^{\prime}$, $I_{i+1}^{\prime}$
with $\left(v_{i},w_{i+1}\right)$. Repeating this, we see that by
adding at most finitely many points to $B$, we may assume that for
all $i\le N^{\prime}-1$, $w_{i}<v_{i+1}$.

If $N^{\prime}=0$ then $B=\eta^{-1}\left(q\left(I_{0}^{\prime}\right)\right)$
is definable in $\cZ$, so suppose $N^{\prime}\ge1$. Let $0<r\in\R$
be such that $\frac{s}{m}-\frac{15}{32m}<v_{0}-r<v_{0}<w_{0}<w_{0}+r<v_{1}<\frac{t}{m}+\frac{m-1}{m}+\frac{15}{32m}$,
and let $B^{\prime}:=\Z\cap B_{v_{0}-r,v_{0},w_{0},w_{0}+r}$ (see
\cref{approximate_balls_notation}). So $B^{\prime}$ is definable
in $\cZ$ and satisfies $\eta^{-1}\left(q\left(\left[v_{0},w_{0}\right]\right)\right)\subseteq B^{\prime}\subseteq\eta^{-1}\left(q\left(\left(v_{0}-r,w_{0}+r\right)\right)\right)$.
Since $q\restriction_{\Lambda}$ is a bijection, we get $B\cap B^{\prime}=\eta^{-1}\left(q\left(I_{0}^{\prime}\right)\right)$,
so $\eta^{-1}\left(q\left(I_{0}^{\prime}\right)\right)$ is definable
in $\cZ$.

Finally, since $I_{0}^{\prime}\subseteq K_{\xi\left(i\right)}$, the
length of $I_{0}^{\prime}$ is at most $\frac{1}{16m}\le\frac{1}{16}$.
\end{proof}
\begin{prop}
\label{definable_short_interval_starting_at_0}There exists $0<u<\frac{1}{4}$
such that $\eta^{-1}\left(q\left(\left(0,u\right)\right)\right)$
is definable in $\mathcal{Z}$.
\end{prop}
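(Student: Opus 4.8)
The plan is to take the short $\cZ$-definable interval supplied by \cref{definable_short_interval} and fashion from it, by purely arithmetical manipulations, a $\cZ$-definable set of the form $\eta^{-1}\left(q\left(\left(0,u\right)\right)\right)$. The conceptual difficulty is that translating a subset of $\Z$ by an integer only shifts the corresponding interval by an element of $\eta\left(\Z\right)$, so one can never move an endpoint of a given interval exactly to $0$; the idea that resolves this is to pass to a \emph{symmetric} interval whose radius is, by construction, a prescribed value $\i{\eta\left(m\right)}$ with $m\in\Z$.

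First I would invoke \cref{definable_short_interval} to obtain an open interval $I$ with $0<\left|I\right|\le\frac{1}{16}$ and $J:=\eta^{-1}\left(q\left(I\right)\right)$ definable in $\cZ$. For $c\in\Z$ we have $J-c=\eta^{-1}\left(q\left(I\right)-\eta\left(c\right)\right)$, so since $\eta\left(\Z\right)$ is dense in $\R\big/\Z$ we may, after replacing $J$ by a suitable translate, assume $I=\left(v,w\right)$ with $-\frac{1}{16}<v<w<\frac{1}{16}$. Using density once more, fix $m\in\Z$ with $0<\i{\eta\left(m\right)}<w-v$ and set $u:=\i{\eta\left(m\right)}$, so $0<u<\frac{1}{16}<\frac{1}{4}$.

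Next I would produce two auxiliary $\cZ$-definable sets. Since $J+m=\eta^{-1}\left(q\left(\left(v+u,w+u\right)\right)\right)$ and all the intervals involved are contained in $\left(-\frac{1}{8},\frac{1}{8}\right)$, on which $q$ is injective, one gets $H:=J\setminus\left(J+m\right)=\eta^{-1}\left(q\left(\left(v,v+u\right]\right)\right)$; this set is definable in $\cZ$ and (being the $\eta$-preimage of a nonempty arc) infinite. Now let $M:=H-H=\left\{ a-b\,:\,a,b\in H\right\}$, again definable in $\cZ$ (no use of $\exists^{\infty}$ is needed here). Since the set of differences of two elements of the half-open interval $\left(v,v+u\right]$ is exactly $\left(-u,u\right)$, and $\eta\left(\Z\right)$ is dense, one checks that $M=\eta^{-1}\left(q\left(\left(-u,u\right)\right)\right)$. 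Thus $M$ is a symmetric interval of radius exactly $u=\i{\eta\left(m\right)}$; this is the crux of the argument, and also the step that required the idea above — everything else is bookkeeping of the ``no wrap-around'' conditions, all guaranteed by $\left|I\right|\le\frac{1}{16}$.

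Finally I would read off the answer. For $x\in\Z$ we have $x\in M$ iff $\i{\eta\left(x\right)}\in\left(-u,u\right)$; and by \cref{restricted_additivity_of_iota}, since $\i{\eta\left(x\right)}+u\in\left(0,2u\right)\subseteq\left[-\frac{1}{2},\frac{1}{2}\right)$, we have $x+m\in M$ iff $\i{\eta\left(x\right)}+u\in\left(-u,u\right)$, i.e.\ iff $\i{\eta\left(x\right)}\in\left(-2u,0\right)$. Hence $\left\{ x\in M\,:\,x+m\notin M\right\} =\eta^{-1}\left(q\left(\left[0,u\right)\right)\right)=\left\{ 0\right\} \cup\eta^{-1}\left(q\left(\left(0,u\right)\right)\right)$, so
\[
\eta^{-1}\left(q\left(\left(0,u\right)\right)\right)=\left\{ x\in\Z\,:\,x\in M\wedge x+m\notin M\wedge x\ne0\right\} ,
\]
which is definable in $\cZ$, with $0<u<\frac{1}{4}$, as required. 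I do not expect any further obstacle: the main point is the passage from $J$ to $H$ and then to the symmetric set $M=H-H$, which is precisely what lets one pin an endpoint to $0$ while only ever translating by integers.
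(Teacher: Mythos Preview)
Your argument is correct and takes a genuinely different route from the paper's. Both proofs start from \cref{definable_short_interval} and aim to isolate the ``positive half'' of a small symmetric arc. The paper first symmetrizes $J$ to $J\cup(-J)=\eta^{-1}(q((-u,u)))$ for an \emph{arbitrary} $u$, and then separates $(0,u)$ from $(-u,0)$ using the definable comparability relation $R(x,y)\iff J\cap(J+x)\supseteq J\cap(J+y)$: for $b\in J$ this detects whether $\i{\eta(b)}$ lies on the same side of $0$ as a fixed $\i{\eta(a)}>0$. Your approach instead engineers $u$ to lie in $\i{\eta(\Z)}$ from the outset, by first trimming $J$ to $H=J\setminus(J+m)$ of width exactly $u=\i{\eta(m)}$ and then symmetrizing via $M=H-H$; once $u=\i{\eta(m)}$, a single integer shift $M\setminus(M+m)$ carves out $[0,u)$ directly. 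What this buys you is a more elementary endgame---no need for the inclusion-comparison relation $R$---at the cost of the extra $H-H$ step and the density argument showing $H-H=\eta^{-1}(q((-u,u)))$. The paper's approach, on the other hand, works for any $u$ without having to realize it as a value of $\iota\circ\eta$, and the relation $R$ may be of independent interest as a definable surrogate for comparing absolute values of $\iota\circ\eta$.
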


\begin{proof}
Let $I$ be as in \cref{definable_short_interval}, and denote
$J:=\eta^{-1}\left(q\left(I\right)\right)$. By replacing $J$ with
$J-a$ for some $a\in J$, we may assume that $0\in I$. By replacing
$J$ with $J\cup\left(-J\right)$, we may assume that $J=-J$, so
$I=-I$, but now the length of $I$ is at most $\frac{1}{8}$ instead
of $\frac{1}{16}$. So $I=\left(-u,u\right)$ for some $0<u\le\frac{1}{16}$.
Define a relation $R\left(x,y\right)$ by $R\left(a,b\right)\iff J\cap\left(J+a\right)\supseteq J\cap\left(J+b\right)$.
So $R$ is definable in $\mathcal{Z}$. 

For all $a,b\in\Z$ we have $b\in J+a\iff b-a\in J\iff\eta\left(b\right)-\eta\left(a\right)=\eta\left(b-a\right)\in q\left(I\right)\iff\eta\left(b\right)\in q\left(I\right)+\eta\left(a\right)=q\left(I\right)+q\left(\i{\eta\left(a\right)}\right)=q\left(I+\i{\eta\left(a\right)}\right)$,
so for all $a\in\Z$ we have $J+a=\eta^{-1}\left(q\left(I+\i{\eta\left(a\right)}\right)\right)$.
Also note that if $a\in J$ then $\i{\eta\left(a\right)}\in\i{q\left(I\right)}=I$
hence $I+\i{\eta\left(a\right)}\subseteq\left(-\frac{1}{8},\frac{1}{8}\right)$.
Therefore, since $q\restriction_{\left[-\frac{1}{2},\frac{1}{2}\right)}$
is a bijection, for all $a\in J$ we have $J\cap\left(J+a\right)=\eta^{-1}\left(q\left(I\cap\left(I+\i{\eta\left(a\right)}\right)\right)\right)$.
In general, for (possibly empty) open intervals $I_{1},I_{2}\subseteq\left(-\frac{1}{2},\frac{1}{2}\right)$
we have $I_{1}\subseteq I_{2}\iff\eta^{-1}\left(q\left(I_{1}\right)\right)\subseteq\eta^{-1}\left(q\left(I_{2}\right)\right)$:
Left to right is clear, so suppose $I_{1}\nsubseteq I_{2}$. Then
$I_{1}\backslash I_{2}$ contains a nonempty open interval. Since
$q\restriction_{\left(-\frac{1}{2},\frac{1}{2}\right)}$ is a homeomorphism,
$q\left(I_{1}\backslash I_{2}\right)$ contains a nonempty open set,
and since $\eta\left(\Z\right)$ is dense in $\R\big/\Z$, there exists
$d\in\Z$ such that $\eta\left(d\right)\in q\left(I_{1}\backslash I_{2}\right)=q\left(I_{1}\right)\backslash q\left(I_{2}\right)$.
So $d\in\eta^{-1}\left(q\left(I_{1}\right)\right)\backslash\eta^{-1}\left(q\left(I_{2}\right)\right)$.
In particular, for all $a,b\in J$ we have $R\left(a,b\right)\iff J\cap\left(J+a\right)\supseteq J\cap\left(J+b\right)\iff I\cap\left(I+\i{\eta\left(a\right)}\right)\supseteq I\cap\left(I+\i{\eta\left(b\right)}\right)$.

Fix $a\in J$ such that $\i{\eta\left(a\right)}>0$, and let $J^{\prime}:=\left\{ 0\neq b\in J\,:\,R\left(a,b\right)\text{ or }R\left(b,a\right)\right\} $.
So $J^{\prime}$ is definable in $\mathcal{Z}$. Note that $I\cap\left(I+\i{\eta\left(a\right)}\right)=\left(-u+\i{\eta\left(a\right)},u\right)$.
Let $b\in J$. If $\i{\eta\left(b\right)}>0$ then $I\cap\left(I+\i{\eta\left(b\right)}\right)=\left(-u+\i{\eta\left(b\right)},u\right)$,
so $R\left(a,b\right)\iff\i{\eta\left(a\right)}\le\i{\eta\left(b\right)}$
and $R\left(b,a\right)\iff\i{\eta\left(b\right)}\le\i{\eta\left(a\right)}$,
hence $b\in J^{\prime}$. If $\i{\eta\left(b\right)}<0$ then $I\cap\left(I+\i{\eta\left(b\right)}\right)=\left(-u,u+\i{\eta\left(b\right)}\right)$,
so we have both $\neg R\left(a,b\right)$ and $\neg R\left(b,a\right)$,
hence $b\notin J^{\prime}$. Therefore $J^{\prime}=\eta^{-1}\left(q\left(\left(0,u\right)\right)\right)$.
\end{proof}
From \cref{definable_short_interval_starting_at_0} and \cref{defining_the_cyclic_order_from_a_short_interval_starting_at_0}
we get that $C_{\alpha}$ is definable in $\cZ$, thus proving \cref{main_theorem_cyclic_order}.

\section{The converse to \texorpdfstring{\cref{main_theorem_cyclic_order}}{\ref{main_theorem_cyclic_order}}}
\begin{thm}
\label{if_cyclic_order_is_definable_then_G00_neq_G0}Let $\cZ$
be an expansion of $\left(\mathbb{Z},+,0,1,C_{\alpha}\right)$ for
some $\alpha\in\R\backslash\Q$, and let $G$ be a monster model.
Suppose that $G^{00}$ exists. Then $G^{00}\neq G^{0}$.
\end{thm}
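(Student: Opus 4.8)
The strategy is to exhibit inside $G$ a type-definable subgroup $H$ of bounded index with $G^{0}\not\subseteq H$. Granting this, $H\cap G^{0}$ is type-definable over $\Z$, is a subgroup, and has bounded index (its index is at most $[G:H]\cdot[G:G^{0}]$), hence contains $G^{00}$; since $H\cap G^{0}\subsetneq G^{0}$ we then get $G^{00}\subseteq H\cap G^{0}\subsetneq G^{0}$, so $G^{00}\neq G^{0}$.

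Write $\eta:\Z\to\R\big/\Z$, $\eta(n)=n\alpha+\Z$; recall that $\eta(\Z)$ is dense and $\eta$ is injective. First I would choose $k_{j}\in\Z$ so that $a_{j}:=\i{\eta(k_{j})}$ satisfies $0<a_{j}<\tfrac14$, $a_{j+1}<\tfrac12 a_{j}$ and $a_{j}\to 0$, and set $V_{j}:=\{n\in\Z:C_{\alpha}(-k_{j},n,k_{j})\}$. Then $V_{j}=\eta^{-1}(q((-a_{j},a_{j})))$, $V_{j}$ is definable in $\cZ$ over $\Z$, and the statements ``$V_{j}=-V_{j}$'', ``$V_{j+1}\subseteq V_{j}$'', ``$V_{j+1}+V_{j+1}\subseteq V_{j}$'' (from $2a_{j+1}<a_{j}<\tfrac12$), and ``$\bigcup_{i\le r_{j}}(V_{j}+n_{i})=\Z$'' for suitable finite tuples $(n_{i})$ (covering $\R\big/\Z$ by short arcs) are all first-order statements about $(\Z,+,0,1,C_{\alpha})$, hence hold for the interpretations $V_{j}^{*}$ in $G$. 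Put $H:=\bigcap_{j}V_{j}^{*}$; this is type-definable over $\Z$, and from symmetry and $V_{j+1}^{*}+V_{j+1}^{*}\subseteq V_{j}^{*}$ it is a subgroup of $G$.

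To see that $H$ has bounded index I would build a group homomorphism $h:G\to\R\big/\Z$ with $\ker h=H$, so that $G/H$ embeds in $\R\big/\Z$ and hence $[G:H]\le 2^{\aleph_{0}}$ in every model. For $a\in G$ set $S_{j}(a):=\{\eta(n):n\in\Z,\ a\in V_{j}^{*}+n\}$: genericity of $V_{j}^{*}$ gives $S_{j}(a)\neq\emptyset$, the inclusions $V_{j'}^{*}\subseteq V_{j}^{*}$ make $(S_{j}(a))_{j}$ decreasing, and if $\eta(n),\eta(n')\in S_{j}(a)$ then $n-n'\in V_{j}^{*}+V_{j}^{*}\subseteq V_{j-1}^{*}$, so $n-n'\in V_{j-1}$ and $\eta(n)-\eta(n')\in q((-a_{j-1},a_{j-1}))$; thus the diameter of $\overline{S_{j}(a)}$ tends to $0$ and $\bigcap_{j}\overline{S_{j}(a)}$ is a single point $h(a)$. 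A routine estimate using $V_{j+1}^{*}+V_{j+1}^{*}\subseteq V_{j}^{*}$ shows $h(a+b)=h(a)+h(b)$, and $h\restriction_{\Z}=\eta$ since $0=\eta(0)\in S_{j}(m)$ for $m\in\Z$. Finally $\ker h=H$: if $a\in H$ then $0\in S_{j}(a)$ for all $j$, so $h(a)=0$; conversely if $h(a)=0$ then for each $j$ some $n$ with $a\in V_{j+1}^{*}+n$ has $\eta(n)\in q((-a_{j+1},a_{j+1}))$, i.e.\ $n\in V_{j+1}^{*}$, whence $a=(a-n)+n\in V_{j+1}^{*}+V_{j+1}^{*}\subseteq V_{j}^{*}$, so $a\in H$. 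I expect the well-definedness of $h$ (via compactness of $\R\big/\Z$) and the identification $\ker h=H$ to be the main technical work.

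It remains to produce $a\in G^{0}\setminus H$. By \cref{G0_in_any_expansion_of_Z}, $G^{0}=\bigcap_{m\ge1}mG$. For each $d\ge1$ the sequence $(dl\alpha)_{l\in\Z}$ is equidistributed mod $1$ (as $d\alpha\notin\Q$), so $\{l\in\Z:d\mid l\}\not\subseteq V_{1}$; this is a first-order statement about $(\Z,+,0,1,C_{\alpha})$, hence $dG\not\subseteq V_{1}^{*}$ in $G$. Therefore the partial type over $\Z$ asserting ``$x\in mG$ for all $m$'' together with ``$x\notin V_{1}^{*}$'' is finitely satisfiable, so it is realized by some $a\in G^{0}$ with $a\notin V_{1}^{*}\supseteq H$. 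Then $H\cap G^{0}\subsetneq G^{0}$, and the argument sketched in the first paragraph yields $G^{00}\neq G^{0}$.
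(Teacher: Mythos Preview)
Your argument is correct and follows essentially the same strategy as the paper: both build the type-definable subgroup $H=\bigcap_j V_j^*$ of bounded index as the kernel of a homomorphism $h:G\to\R\big/\Z$ extending $\eta$, and then observe that $G^0\not\subseteq H$. The packaging differs: the paper introduces a two-sorted structure $(\cZ,\cS,\eta)$, passes to its monster, and sets $h:=\mathrm{st}\circ\tilde\eta$, whereas you construct $h$ by hand from the nested definable balls $V_j$ (your $S_j(a)$ computation is exactly the standard-part construction unwound). For the final step the paper simply says ``by \cref{G0_in_any_expansion_of_Z}'' (implicitly using that a continuous surjection $\hat\Z\twoheadrightarrow\R\big/\Z$ is impossible), while your compactness argument exhibiting $a\in G^0\setminus V_1^*$ is more direct and arguably cleaner. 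Your route avoids the auxiliary two-sorted structure entirely, at the cost of having to verify the homomorphism and kernel identities explicitly.

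One small slip: your justification ``$h\restriction_{\Z}=\eta$ since $0=\eta(0)\in S_{j}(m)$ for $m\in\Z$'' is wrong as written, since $0\in S_j(m)$ would require $m\in V_j^*$, which fails for large $j$ when $m\neq 0$. The correct observation is that $\eta(m)\in S_j(m)$ for all $j$ (because $m-m=0\in V_j^*$), whence $h(m)=\eta(m)$.
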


\begin{proof}
Let $S:=\R\big/\Z$ and let $\cS:=\left(S,+,\cC\right)$, where $\cC$
is the positively oriented cyclic order on $S$. Let $\eta:\Z\rightarrow S$
be given by $\eta\left(n\right):=n\alpha+\Z$, and let $\cZ^{\prime}:=\left(\cZ,\cS,\eta\right)$.
Let $\fC^{\prime}$ be a monster model of $\cZ^{\prime}$, and let
$\fC$ be the reduct of $\fC^{\prime}$ to the language of $\cZ$.
So $\fC$ is a monster model of $\cZ$, and we may assume that $\fC=G$.
Denote by $\tilde{C}_{\alpha}$,$\tilde{S}$,$\tilde{\cC}$,$\tilde{\eta}$
the interpretations in $\fC^{\prime}$ of $C_{\alpha}$,$S$,$\cC$,$\eta$,
respectively. By definition, for all $a,b,c\in\Z$ we have $C_{\alpha}\left(a,b,c\right)\leftrightarrow\cC\left(\eta\left(a\right),\eta\left(b\right),\eta\left(c\right)\right)$,
so by elementarity, for all $a,b,c\in G$ we have $\tilde{C}_{\alpha}\left(a,b,c\right)\leftrightarrow\tilde{\cC}\left(\tilde{\eta}\left(a\right),\tilde{\eta}\left(b\right),\tilde{\eta}\left(c\right)\right)$.
Let $\text{st}:\tilde{S}\rightarrow S$ be the standard part map,
and let $h:=\text{st}\circ\tilde{\eta}$. So $h:G\rightarrow S$ is
a homomorphism and $h\restriction_{\Z}=\eta$. Since $\eta\left(\Z\right)$
is dense in $S$, by saturation we get that $h$ is surjective.
\begin{claim}
\label{continuity_of_the_homomorphism_to_the_circle}For every
closed subset $C\subseteq S$, $h^{-1}\left(C\right)$ is type-definable
over $\Z$ in $\cZ$.
\end{claim}

\begin{proof}[Proof of Claim]
Equivalently, for every open subset $U\subseteq S$, $h^{-1}\left(U\right)$
is $\bigvee$-definable over $\Z$ in $\cZ$. Let $q:\R\to S=\R\big/\Z$
be the quotient map. So for every $s\in\R$ we have that $q\restriction_{\left(s,s+1\right)}$
is a homeomorphism and that for every $s<u<v<s+1$, $q\left(\left(u,v\right)\right)$
is the subset of $S$ defined by $\cC\left(q\left(u\right),x,q\left(v\right)\right)$.
It is enough to show that for every $u,v\in\R$ such that $u<v<u+1$,
$h^{-1}\left(q\left(\left(u,v\right)\right)\right)$ is $\bigvee$-definable
over $\Z$ in $\cZ$. 

So let $u,v\in\R$ be such that $u<v<u+1$. Let $s\in\left(v,u+1\right)$
be such that $s\notin\alpha\Z+\Z$ (so $q\left(s\right)\notin\eta\left(\Z\right)$),
and let $\iota:=\left(q\restriction_{\left(s,s+1\right)}\right)^{-1}:\left(\R\big/\Z\right)\backslash\left\{ q\left(s\right)\right\} \to\left(s,s+1\right)$.
Since $\eta\left(\Z\right)$ is dense in $S$ and $\iota$ is a homeomorphism,
$\i{\eta\left(\Z\right)}$ is dense in $\left(s,s+1\right)$, so there
are sequences $\left(b_{i}\right)_{i<\omega}$,$\left(c_{i}\right)_{i<\omega}$
of elements in $\Z$ such that $u<\dots<\i{\eta\left(b_{2}\right)}<\i{\eta\left(b_{1}\right)}<\i{\eta\left(b_{0}\right)}<\i{\eta\left(c_{0}\right)}<\i{\eta\left(c_{1}\right)}<\i{\eta\left(c_{2}\right)}<\dots<v$,
$\inf_{i<\omega}\i{\eta\left(b_{i}\right)}=u$, and $\sup_{i<\omega}\i{\eta\left(c_{i}\right)}=v$.
For each $i<\omega$ let $D_{i}\subseteq G$ be the set defined by
$C_{\alpha}\left(b_{i},x,c_{i}\right)$. We show that $h^{-1}\left(q\left(\left(u,v\right)\right)\right)=\bigcup_{i<\omega}D_{i}$:

Let $a\in h^{-1}\left(q\left(\left(u,v\right)\right)\right)$. Then
$\i{h\left(a\right)}\in\left(u,v\right)=\bigcup_{i<\omega}\left(\i{\eta\left(b_{i}\right)},\i{\eta\left(c_{i}\right)}\right)$,
so for some $i<\omega$ we have $\i{h\left(a\right)}\in\left(\i{\eta\left(b_{i}\right)},\i{\eta\left(c_{i}\right)}\right)$.
Applying $q$ we get $\cC\left(\eta\left(b_{i}\right),h\left(a\right),\eta\left(c_{i}\right)\right)$,
i.e., $\cC\left(\eta\left(b_{i}\right),\text{st}\circ\tilde{\eta}\left(a\right),\eta\left(c_{i}\right)\right)$.
Therefore $\tilde{\cC}\left(\eta\left(b_{i}\right),\tilde{\eta}\left(a\right),\eta\left(c_{i}\right)\right)$,
and so $\tilde{C}_{\alpha}\left(b_{i},a,c_{i}\right)$, i.e., $a\in D_{i}$.

Suppose that $a\in D_{i}$ for some $i<\omega$. So $\tilde{C}_{\alpha}\left(b_{i},a,c_{i}\right)$,
therefore $\tilde{\cC}\left(\tilde{\eta}\left(b_{i}\right),\tilde{\eta}\left(a\right),\tilde{\eta}\left(c_{i}\right)\right)$,
and since $b_{i},c_{i}\in\Z$ we have $\tilde{\cC}\left(\eta\left(b_{i}\right),\tilde{\eta}\left(a\right),\eta\left(c_{i}\right)\right)$.
Since $s<\i{\eta\left(b_{i+1}\right)}<\i{\eta\left(b_{i}\right)}<\i{\eta\left(c_{i}\right)}<\i{\eta\left(c_{i+1}\right)}<s+1$
we get $\tilde{\cC}\left(\eta\left(b_{i+1}\right),\text{st}\circ\tilde{\eta}\left(a\right),\eta\left(c_{i+1}\right)\right)$,
i.e., $\cC\left(\eta\left(b_{i+1}\right),h\left(a\right),\eta\left(c_{i+1}\right)\right)$.
Therefore $h\left(a\right)\in q\left(\left(\i{\eta\left(b_{i+1}\right)},\i{\eta\left(c_{i+1}\right)}\right)\right)\subseteq q\left(\left(u,v\right)\right)$,
so $a\in h^{-1}\left(q\left(\left(u,v\right)\right)\right)$.
\end{proof}
In particular, $\cE:=h^{-1}\left(0\right)$ is type-definable over
$\Z$ in $\cZ$. By definition, $\cE$ has bounded index in $G$,
so $G^{00}\subseteq\cE$. By \cref{G0_in_any_expansion_of_Z} we
get $G^{00}\neq G^{0}$.
\end{proof}
\begin{rem}
In the context of \cref{if_cyclic_order_is_definable_then_G00_neq_G0},
it is worth noting that the converse to \cref{continuity_of_the_homomorphism_to_the_circle}
also holds, i.e., a subset $C\subseteq S$ is closed if and only if
$h^{-1}\left(C\right)$ is type-definable over $\Z$ in $\cZ$: $h$
induces a group isomorphism $\hat{h}:G\big/\cE\to S$, which is continuous
by \cref{continuity_of_the_homomorphism_to_the_circle}. Since
$G\big/\cE$ is compact, $\hat{h}$ is in fact a homeomorphism.
\end{rem}

\begin{rem}
In the context of \cref{if_cyclic_order_is_definable_then_G00_neq_G0},
it follows that for each $2\le m\in\Z$, $m\cE$ has bounded index
in $mG$ and hence in $G$. So $G^{00}\subseteq\bigcap_{m=1}^{\infty}m\cE$.
\end{rem}

\bibliographystyle{alpha}
\bibliography{on_dp_minimal_expansions_of_the_integers_2}

\end{document}